\newfont{\cyr}{wncyr10 scaled 1100}
\newcommand*\ZZ{|[draw,circle]| \Z_2}
\numberwithin{equation}{section}
\DeclareSymbolFont{cyrletters}{OT2}{wncyr}{m}{n}
\DeclareMathSymbol{\Sha}{\mathalpha}{cyrletters}{"58}
\theoremstyle{plain}
\newtheorem{theorem}{Theorem}[section]
\newtheorem*{theorem*}{Theorem}
\newtheorem{corollary}[theorem]{Corollary}
\newtheorem{lemma}[theorem]{Lemma}
\newtheorem{proposition}[theorem]{Proposition}
\numberwithin{equation}{section}
\newtheorem{thm}{Theorem}
\newtheorem{ass}[thm]{Assumption}
\theoremstyle{definition}
\newtheorem{definition}[theorem]{Definition}
\newtheorem{remarkwr}[theorem]{Remark}
\theoremstyle{remark}
\newtheorem{obswr}[theorem]{Observation}
\newtheorem{intro-definition}[theorem]{Definition}
\newenvironment{remark}{\begin{remarkwr}\begin{upshape}}{\end{upshape}\end{remarkwr}}
\newenvironment{myproof}[2] {\paragraph{\emph{Proof of {#1} {#2} }}}{\hfill$\square$}
\def\Gal{\mathrm{Gal}}
\def\GL{\mathrm{GL}}
\def\ord{\mathrm{ord}}
\def\res{\mathrm{res}}
\def\sing{\mathrm{sing}}
\def\new{\mathrm{new}}
\def\ac{\mathrm{ac}}
\def\frakm{\mathfrak{m}}
\def\interX{\mathfrak{X}}
\def\ac{\mathrm{ac}}
\def\mix{\mathrm{mix}}
\def\un{\mathrm{un}}
\def\ram{\mathrm{ram}}
\def\con{\mathrm{cong}}
\def\FL{\mathrm{FL}}
\def\St{\mathrm{St}}
\def\prim{\mathrm{prim}}
\def\Ad{\mathrm{Ad}}
\def\leng{\mathrm{leng}}
\DeclareMathOperator{\Sym}{Sym}
\DeclareMathOperator{\Hom}{Hom}
\DeclareMathOperator{\End}{End}
\def\calD{\mathcal{D}}
\def\calH{\mathcal{H}}
\def\calM{\mathcal{M}}
\def\calP{\mathcal{P}}
\def\calO{\mathcal{O}}
\def\calS{\mathcal{S}}
\def\calZ{\mathcal{Z}}
\def\frakp{\mathfrak{p}}
\def\frakY{\mathfrak{Y}}
\def\Adel{\mathbf{A}}
\def\CC{\mathbf{C}}
\def\FF{\mathbf{F}}
\def\PP{\mathbf{P}}
\def\QQ{\mathbf{Q}}
\def\TT{\mathbb{T}}
\def\ZZ{\mathbf{Z}}
\def\rmA{\mathrm{A}}
\def\rmB{\mathrm{B}}
\def\rmC{\mathrm{C}}
\def\rmF{\mathrm{F}}
\def\rmG{\mathrm{G}}
\def\rmH{\mathrm{H}}
\def\rmI{\mathrm{I}}
\def\rmE{\mathrm{E}}
\def\rmK{\mathrm{K}}
\def\rmM{\mathrm{M}}
\def\rmN{\mathrm{N}}
\def\rmQ{\mathrm{Q}}
\def\rmU{\mathrm{U}}
\def\rmV{\mathrm{V}}
\def\rmR{\mathrm{R}}
\def\rmS{\mathrm{S}}
\def\rmT{\mathrm{T}}
\def\rmZ{\mathrm{Z}}
\def\rmX{\mathrm{X}}
\def\rmY{\mathrm{Y}}
\def\rmW{\mathrm{W}}
\def\sfT{\mathsf{T}}
\newcommand{\Iw}{\mathrm{Iw}}
\begin{document}

\title[Flach system and Jacquet--Langlands correspondence]
{Flach system of Jacquet--Langlands type and quaternionic period}

\author{Haining Wang }
\address{\parbox{\linewidth}{address:\\Shanghai Center for Mathematical Sciences,\\ Fudan University,\\No,2005 Songhu Road,\\Shanghai,200438, China.~ }}
\email{wanghaining1121@outlook.com}

\begin{abstract}
In this article, we study the adjoint Selmer group of a modular form that admits a Jacquet--Langlands transfer to a modular form on a definite quaternion algebra. We show that this Selmer group has length given by the valuation of the Petersson norm of the quaternionic modular form.  Our proof is based on an Euler system argument first used by Flach in his study for the adjoint Selmer group of an elliptic curve by producing elements in the motivic cohomology of the product of two Shimura curves. However our construction does not involve Siegel modular units which make it more adaptable to study adjoint motives coming from products of higher dimensional Shimura varieties.
\end{abstract}
   
\subjclass[2000]{Primary 11G18, 11R34, 14G35}
\date{\today}

\maketitle
\tableofcontents
\section{Introduction}
In a seminal work of Flach \cite{Flach}, the author proves the finiteness of the adjoint Selmer group of an elliptic curve using an Euler system type argument following Kolyvagin. The key point is to construct suitable elements in the motivic cohomology of product of two modular curves with tight control of the local ramification behaviour when mapped to the Galois cohomology via certain Abel--Jacobi map. In fact, in \cite{Flach}, such element consists of a Hecke correspondence at a suitable auxiliary prime and a well-chosen Siegel modular unit. The local ramification behaviour of this element at the auxiliary prime reflects geometrically the Eichler--Shimura congruence relation. This work has been extended in various directions to the adjoint Selmer group of modular forms in \cite{Flach2} and \cite{Weston1} following similar strategies in Flach's original work. Recently, Flach's construction has been vastly extended to many automorphic motives and has been used to study Iwasawa theoretic question associated to these automorphic motives. We will refer to \cite{BDR1}, \cite{BDR2}, \cite{LZ}, \cite{LLZ}, \cite{LSZ} for some of these examples. 

In \cite{Flach}, the natural bound comes out of his Euler system argument is the degree of the modular parametrization for the elliptic curve. It is known also that the degree of modular parametrization is closely related to the congruence module of the modular form associated to the elliptic curve, see \cite{ARS} and \cite{Zagier}. In this article, we will revisit the construction of \cite{Flach} in the setting when the modular form admits Jacquet--Langlands transfer to a modular form on a definite quaternion algebra. In this case, we can incorporate techniques from arithmetic level raising to bound the Selmer group which manifest the Jacquet--Langlands correspondence at a geometric level. In particular, Siegel modular unit does not appear in our construction which makes it more adaptable to the setting of product of higher dimensional Shimura varieties and we refer to this Flach type system as a Flach system of Jacquet--Langlands type to distinguish it from the original Flach system which is of Eichler--Shimura type under the terminology of \cite{Weston1}. It turns out that in our setting the natural bound for the adjoint Selmer group is given by the Petersson norm of the quaternionic modular form obtained by a normalized Jacquet--Langlands transfer. We will also refer to this Petersson norm as the quaternionic period of the modular form and hence explains the title of this article. This bound is also reasonable as it is known this quaternionic period also measures congruences between modular forms, see in particular \cite{PW}, \cite{CH1} and \cite{KO} for such connections. 

\subsection{Main result}
In order to state our main results, we introduce some notations. These notations are sometimes simplified and may differ from what are used in the main body of this article. Throughout this article, we fix a prime $l\geq 5$. Let $f$ be a normalized newform in $\rmS^{\new}_{2}(\Gamma_{0}(\rmN))$. We further assume that we have a factorization $\rmN=\rmN^{+}\rmN^{-}$ such that $(\rmN^{+}, \rmN^{-})=1$ and $\rmN^{-}$ is square-free and consists of \emph{odd} number of prime factors. Let $\overline{\rmB}=\rmB_{\rmN^{-}}$ be the definite quaternion algebra of discriminant $\rmN^{-}$ and $\rmS^{\overline{\rmB}}(\rmN^{+})$ be the space of quaternionic modular forms for $\overline{\rmB}^{\times}$ of level $\rmN^{+}$.  Suppose $f^{\dagger}$ is the quaternionic modular form in $\rmS^{\overline{\rmB}}(\rmN^{+})$ corresponding to $f$ under the Jacquet--Langlands correspondence and is normalized integrally.  We denote by $\calP(f^{\dagger})=\langle f^{\dagger}, f^{\dagger}\rangle$ the Petersson norm of $f^{\dagger}$ which we will explain more below. Let $\rmE=\QQ(f)$ be the Hecke field of $f$ and we fix an embedding $\iota_{l}: \QQ^{\ac}\hookrightarrow \CC_{l}$ such that it induces a place $\lambda$ of $\rmE$. Let $\rmE_{\lambda}$ be the completion of $\rmE$ at $\lambda$ and $\calO=\calO_{\rmE_{\lambda}}$ be the valuation ring of $\rmE_{\lambda}$. We fix a uniformizer $\varpi\in\calO$ and write $\lambda=(\varpi)$ for its maximal ideal. We denote by $k_{\lambda}$ the residue field of $\calO$. We let $\TT=\TT_{\rmN^{+}, \rmN^{-}}$ be the $\lambda$-adic Hecke algebra corresponding to the cusp forms of level $\rmN=\rmN^{+}\rmN^{-}$ which is new at primes dividing $\rmN^{-}$. Since $f$ is an eigenform, we have a morphism
$\phi_{f}: \TT\rightarrow \calO$
corresponding to the system of Hecke eigenvalues of $f$. Let $\frakm$ be the maximal ideal given by the kernel $\mathrm{ker}\{\TT\xrightarrow{\phi_{f}} \calO\rightarrow \calO/\lambda\}$.  We will consider the Galois representation $\rho_{f,\lambda}$ attached to $f$ provided by the Eichler--Shimura construction
\begin{equation*}
\rho_{f,\lambda}: \rmG_{\QQ}\rightarrow \GL_{2}(\rmE_{\lambda})=\mathrm{Aut}(\rmV_{f,\lambda})
\end{equation*}
whose representation space is given by $\rmV_{\rho}=\rmV_{f,\lambda}$. We will sometimes abbreviate $\rho_{f,\lambda}$ by $\rho$ and we denote by $\overline{\rho}=\overline{\rho}_{f,\lambda}$ the residual representation of $\rho_{f,\lambda}$. We let $\Sigma^{+}$ be the set of primes dividing $\rmN^{+}$ and $\Sigma^{-}_{\ram}$ be the set of primes $v$ dividing $\rmN^{-}$ and such that $l\mid v^{2}-1$. Let  $\Sigma_{\mathrm{mix}}$ be the set of primes $v$ dividing $\rmN^{-}$ but $l\nmid v^{2}-1$. Recall for a deformation $\rho_{\rmA}$ of $\overline{\rho}$ over an $\calO$-algebra $\rmA$, we say $\rho_{\rmA}$ is minimally ramified at a prime $v$ if $\rho_{\rmA}(\rmI_{\QQ_{v}})$ has the same order as $\overline{\rho}(\rmI_{\QQ_{v}})$ when $l \nmid \# \overline{\rho}(\rmI_{\QQ_{v}})$ or if $\rho_{\rmA}$ has a rank one $\rmI_{\QQ_{v}}$-coinvariant over $\rmA$ when $l \mid \#\overline{\rho}(\rmI_{\QQ_{v}})$. We say $\bar{\rho}$ is minimal at the prime $v$ if all the liftings of $\bar{\rho}$ is minimally ramified at $v$.

\begin{ass}\label{ass1}
We make the following assumptions on $\bar{\rho}$. 
\begin{enumerate}
\item  $\bar{\rho}\vert_{\rmG_{\QQ(\zeta_{l})}}$ is absolutely irreducible;
\item The image of $\bar{\rho}$ contains $\GL_{2}(\FF_{l})$;
\item $\bar{\rho}$ is minimal at primes in $\Sigma^{+}$;
\item $\bar{\rho}$ is ramified at primes in $\Sigma^{-}_{\ram}$. 
\end{enumerate}
\end{ass}
Under this assumption, we will fix a Galois stable lattice $\rmT_{\rho}$ in $\rmV_{\rho}$ and define $\calM_{\rho}=\rmV_{\rho}/\rmT_{\rho}$. For each $n\geq1$, we put $\rmT_{n}=\rmT_{\rho}\mod \lambda^{n}$ and $\calM_{n}=\calM_{\rho}[\lambda^{n}]$. We  will be concerned with the adjoint representation $\Ad^{0}(\calM_{\rho})$ associated to $\calM_{\rho}$
which is also isomorphic to $\Sym^{2}(\calM_{\rho})(-1)$. We will work with the finite Galois module
\begin{equation*}
\rmM_{n}=\Ad^{0}(\calM_{n})=\Sym^{2}(\calM_{n})(-1) 
\end{equation*}
for the most of time. 

We will be concerned with the Bloch--Kato Selmer group 
\begin{equation*}
\rmH^{1}_{f}(\QQ, \Ad^{0}(\calM_{\rho}))=\rmH^{1}_{f}(\QQ, \mathrm{Ad}^{0}(\rho)\otimes \rmE_{\lambda}/\calO)
\end{equation*}
of  $\mathrm{Ad}^{0}(\calM_{\rho})$. Then our main result is the following theorem concerning the size of this group. 

\begin{theorem}
Suppose $f$ is a newform in $\rmS^{\new}_{2}(\Gamma_{0}(\rmN))$ which admits a normailized Jacquet--Langlands transfer to a quaternionic modular form $f^{\dagger}$ in $\rmS^{\overline{\rmB}}(\rmN^{+})$. Let $\eta=\varpi^{\nu}$ and $\nu=\mathrm{ord}_{\lambda}(\calP(f^{\dagger}))$ for the quaternionic period $\calP(f^{\dagger})$ of $f$. We assume that
\begin{enumerate}
\item the residual Galois representation $\overline{\rho}$ satisfies Assumption \ref{ass1};
\item the Galois cohomology group $\rmH^{1}(\QQ(\rmM_{n})/\QQ, \rmM_{n})=0$
for every $n\geq 1$ and where $\QQ(\rmM_{n})$ is the splitting field of the Galois module $\rmM_{n}$.
\end{enumerate}
Then the Selmer group $\rmH^{1}_{f}(\QQ, \Ad^{0}(\calM_{\rho}))$ is annihilated by $\eta$ and in fact has length equal to $\nu=\mathrm{ord}_{\lambda}(\calP(f^{\dagger}))$. 
\end{theorem}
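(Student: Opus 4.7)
The plan is to build a Flach-type Euler system from motivic cohomology of self-products of Shimura curves of Jacquet--Langlands type, and to feed the resulting cohomology classes into a Kolyvagin-style argument; the input from Jacquet--Langlands enters in the computation of the local residue at an auxiliary level-raising prime, where the quaternionic period $\calP(f^{\dagger})$ appears as the length of a congruence module.

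For each $n\geq 1$, I would first set up a supply of $n$-admissible auxiliary primes $q$ in the sense of Bertolini--Darmon: $q\nmid l\rmN$, $l^{n}\mid q^{2}-1$, and $\Tr\overline{\rho}(\Frob_{q})^{2}\equiv (q+1)^{2}\pmod{\lambda^{n}}$. Assumption~\ref{ass1}(1)--(2) and the vanishing hypothesis on $\rmH^{1}(\QQ(\rmM_{n})/\QQ,\rmM_{n})$ furnish, by Chebotarev, enough such primes to separate any prescribed nonzero class in $\rmH^{1}_{f}(\QQ,\rmM_{n})$. For a fixed $v_{0}\mid \rmN^{-}$, let $\rmB'$ be the indefinite quaternion algebra of discriminant $\rmN^{-}/v_{0}$ and $X=X_{\rmN^{+}v_{0}q}$ the associated Shimura curve of level $\rmN^{+}v_{0}q$. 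On $X\times X$ I would produce a class in motivic cohomology $\rmH^{3}_{\calM}(X\times X,\QQ(2))$ supported on the difference of the graphs of the Hecke correspondences $T_{q}$ and $T_{q}^{t}$, adjusted by a cycle on the diagonal; the role played in Flach's original construction by a Siegel modular unit is taken over here by the rational function on the special fiber whose divisor is dictated by the Eichler--Shimura congruence at $q$, avoiding any appeal to modular units. The integral \'{e}tale Abel--Jacobi map, followed by projection to the $\frakm$-isotypic part, produces $\kappa_{n}(q)\in \rmH^{1}(\QQ,\rmM_{n})$.

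The second task is the local analysis of $\kappa_{n}(q)$. At primes $v$ of good reduction for $X$ the class is unramified by the integral provenance of the construction. At $v\in \Sigma^{+}$ the minimality assumption~\ref{ass1}(3) ensures the local class lies in the minimal Selmer condition; at $v\in \Sigma^{-}_{\ram}\cup \Sigma_{\mix}$ the same is achieved via Cerednik--Drinfeld uniformization of $X$; at $v=l$ the class sits in $\rmH^{1}_{f}(\QQ_{l},\rmM_{n})$ by crystallinity of the integral cohomology. At $v=q$, the singular residue $\partial_{q}\kappa_{n}(q)\in \rmH^{1}_{\sing}(\QQ_{q},\rmM_{n})$ is computed on the special fiber of $X$ at $q$ via the action of $T_{q}$ on the supersingular locus; the Jacquet--Langlands identification of that locus with the double coset space computing $\rmS^{\overline{\rmB}}(\rmN^{+}v_{0})$ then rewrites $\partial_{q}\kappa_{n}(q)$ in terms of $f^{\dagger}$, and its image in the free rank-one quotient $\rmH^{1}_{\sing}(\QQ_{q},\rmM_{n})_{\frakm}$ equals a unit times $\varpi^{\nu}$, the defect being precisely the length of the congruence module measured by the Petersson pairing on the definite side.

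With this input, Kolyvagin's machine closes the argument. Given any nonzero $s\in \rmH^{1}_{f}(\QQ,\rmM_{n})$, choose an $n$-admissible $q$ at which $\res_{q}(s)$ generates the finite subgroup of $\rmH^{1}(\QQ_{q},\rmM_{n})$, then apply global reciprocity
\[
\sum_{v}\langle \res_{v}(s),\res_{v}(\kappa_{n}(q))\rangle_{v}=0 .
\]
All terms with $v\neq q$ vanish (one factor lies in the Selmer condition while the other lies in its annihilator under local Tate duality), so the $v=q$ term alone forces $\varpi^{\nu}\cdot s=0$. Passing to the limit over $n$ yields $\eta\cdot \rmH^{1}_{f}(\QQ,\Ad^{0}(\calM_{\rho}))=0$, and the matching lower bound on the length follows from Wiles's numerical criterion applied to $\phi_{f}$ combined with the Jacquet--Langlands identification of the congruence module of $\TT_{\frakm}$ with $\calP(f^{\dagger})$. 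The main obstacle is the local computation at $q$: making the identification of $\partial_{q}\kappa_{n}(q)$ with the quaternionic period effective at the integral level, since this is where both the absence of Siegel units and the presence of Jacquet--Langlands at finite level are felt most strongly.
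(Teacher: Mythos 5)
Your proposal captures the broad outline correctly (level-raising primes, motivic cohomology classes on a product of Shimura curves, a reciprocity formula and a Kolyvagin-style argument, with the lower bound coming from the $\rmR=\rmT$ theorem), but it deviates from the paper in two places, one of which is a genuine gap.

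First, your definition of $n$-admissible prime is wrong: you require $l^{n}\mid q^{2}-1$, but the condition in Definition \ref{n-adm} is $l\nmid p^{2}-1$. The latter is essential: it ensures the Frobenius eigenvalues of $\rmT_{n}$ at $p$ are distinct mod $\lambda$, which is what makes both the level-raising theorem (Theorem \ref{level-raise-curve}) and the computation that $\rmH^{1}_{\sin}(\QQ_{p},\rmN_{n})$ is free of rank one (Lemma \ref{index}) go through. With your condition, the eigenvalues $1,p,p^{2}$ on $\rmN_{n}(-1)$ collapse and the singular quotient is no longer rank one.

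Second, and more fundamentally, your cycle construction is the Eichler--Shimura one, not the Jacquet--Langlands one that the paper is built on. You place $q$ in the \emph{level} of an otherwise good-reduction Shimura curve $X$ and try to build the motivic class from graphs of $T_{q}$ and $T_{q}^{t}$, with a rational function ``dictated by the Eichler--Shimura congruence.'' But an element of $\rmH^{3}_{\calM}(X\times X,\ZZ(2))$ is a pair $(\rmZ,f)$ with $\mathrm{div}_{\rmZ}(f)=0$ on the \emph{generic fiber}; the Eichler--Shimura congruence only tells you what the divisor should be mod $q$, not how to produce $f$ over $\QQ$. In Flach's original work this is exactly what the Siegel modular unit supplies, and you have not provided a substitute. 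The paper avoids the problem entirely by putting $p$ in the \emph{discriminant}: the curve $\rmX(\rmB)$ (with $\rmB$ of discriminant $p\rmN^{-}$) has totally degenerate reduction at $p$ by Cerednik--Drinfeld, and the Flach element is simply $\Theta^{[p]}(\rmB)=(\theta_{\ast}\rmX(\rmB),\,p)$ with $p$ the constant function, which has trivial Weil divisor on the diagonal for free. The divisor map $\mathrm{div}_{p}$ then cuts out the entire special fiber, and the singular residue is computed via the monodromy filtration from the Rapoport--Zink spectral sequence and the arithmetic level-raising isomorphism (Theorem \ref{level-raise-curve}, Propositions \ref{split} and \ref{level-raise-prod}), landing in $\Gamma(\rmZ(\overline{\rmB}),\calO_{n})^{\otimes 2}_{\frakm}$ where the pairing against $f^{\dagger}\otimes f^{\dagger}$ produces $\calP(f^{\dagger})$. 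Until you specify a genuine rational function on the generic fiber whose divisor vanishes, the Eichler--Shimura route you describe has a hole exactly where the paper introduces its main new idea.
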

We remark that the first assumption in the statement of the theorem is mainly used to prove results concerning arithmetic level raising theorems and certain $\rmR=\rmT$ theorem which we think can be weakened. The second assumption is inherited from the Euler system argument used in \cite{Flach} and \cite{Weston1} and this assumption is verified when the Galois representation $\rho$ is surjective onto its image. 

\subsection{Strategy of our proof} Next we describe the strategy for proving this theorem. To bound the Selmer group of $\Ad^{0}(\calM_{\rho})$, we will bound the Selmer group of $\rmM_{n}$ for each $n\geq 1$. We are naturally led to construct elements in the Galois cohomology group $\rmH^{1}(\QQ, \rmN_{n})$ where $\rmN_{n}=\Sym^{2}\rmT_{n}$ which is the Cartier dual of $\rmM_{n}$. 

In light of the arithmetic level raising theorems, we are led to consider the Shimura curve $\rmX(\rmB)$ where $\rmB=\rmB_{p\rmN^{-}}$ is the indefinite quaternion algebra of discriminant $p\rmN^{-}$ and where $p$ is the so-called $n$-admissible prime for $f$. These primes are level raising primes for $f$ such that the Galois representation $\rmT_{n}$ naturally appear in the first cohomology group $\rmH^{1}(\rmX(\rmB)\otimes\overline{\QQ}, \calO_{n}(1))$ of $\rmX(\rmB)$. Let $\TT^{[p]}=\TT_{\rmN^{+}, p\rmN^{-}}$ be the $\lambda$-adic Hecke algebra corresponding to the cusp forms of level $\rmN p=\rmN^{+}\rmN^{-}p$, which are new at primes dividing $\rmN^{-}p$. Since $p$ is $n$-admissible and hence level raising for $f$, the morphism $\phi_{f}$ modulo $\lambda$ gives rise to a maximal ideal $\frakm^{[p]}$ of $\TT^{[p]}$. Then $\rmH^{1}(\rmX(\rmB)\otimes\overline{\QQ}, \calO_{n}(1))_{\frakm^{[p]}}$ is in fact isomorphic to some copies of $\rmT_{n}$. Therefore $\rmN_{n}$ will appear naturally in 
\begin{equation*}
\rmH^{1}(\rmX(\rmB)\otimes\overline{\QQ}, \calO_{n}(1))^{\otimes 2}_{\frakm^{[p]}}
\end{equation*}
and we need to construct elements in 
\begin{equation*}
\rmH^{1}(\QQ, \rmH^{1}(\rmX(\rmB)\otimes\overline{\QQ}, \calO_{n}(1))^{\otimes 2}_{\frakm^{[p]}}). 
\end{equation*}
For this purpose, we consider the motivic cohomology group 
$\rmH^{3}_{\calM}(\rmX(\rmB)^{2},\ZZ(2))$
for the surface $\rmX(\rmB)^{2}$ whose elements consist of pairs $(\rmZ, f)$ where $\rmZ$ is a curve on $\rmX(\rmB)^{2}$ and $f$ is rational function on $\rmZ$ whose Weil divisor is trivial. 

Let $\theta: \rmX(\rmB)\rightarrow \rmX(\rmB)^{2}$ be the diagonal embedding, then we will define the \emph{Flach element} $\Theta^{[p]}(\rmB)$ to be the class in $\rmH^{3}_{\calM}(\rmX(\rmB)^{2},\ZZ(2))$ represented by the pair 
\begin{equation*}
\Theta^{[p]}(\rmB):=(\theta_{\ast}\rmX(\rmB), p).
\end{equation*}
 There is an Abel--Jacobi map 
\begin{equation*}
\mathrm{AJ}_{\frakm^{[p]}}: \rmH^{3}_{\calM}(\rmX(\rmB)^{2}, \ZZ(2))\rightarrow \rmH^{1}(\QQ, \rmH^{1}(\rmX(\rmB)\otimes\overline{\QQ},\calO(1))^{\otimes2}_{\frakm^{[p]}})
\end{equation*}
induced by the Chern character map from the coniveau spectral sequence in $K$-theory to that in \'etale cohomology.
We let 
\begin{equation*}
\kappa^{[p]}:=\mathrm{AJ}_{\frakm^{[p]}}(\Theta^{[p]}(\rmB))\in \rmH^{1}(\QQ, \rmH^{1}(\rmX(\rmB)\otimes\overline{\QQ},\calO(1))^{\otimes2}_{\frakm^{[p]}}) 
\end{equation*}
and we will refer to this element as the \emph{Flach class} for $\rmX(\rmB)^{2}$.
Composing the map $\mathrm{AJ}_{\frakm^{[p]}}$ with the natural projection map from $\rmH^{1}(\QQ, \rmH^{1}(\rmX(\rmB)\otimes\overline{\QQ},\calO(1))^{\otimes2}_{\frakm^{[p]}})$ to $\rmH^{1}(\QQ,\rmN_{n})$, we get our desired element 
$\kappa^{[p]}_{n}\in \rmH^{1}(\QQ, \rmN_{n})$. 

To apply the Euler system argument, we need to analyze the restriction of the element $\kappa^{[p]}_{n}$ in the local Galois cohomology $\rmH^{1}(\QQ_{v}, \rmN_{n})$ for each place $v$ of $\QQ$. Under our assumption, the most interesting place is $v=p$, and at this place we will calculate the depth of the singular residue $\partial_{p}(\kappa^{[p]}_{n})$ of  $\kappa^{[p]}_{n}$ in the singular part $\rmH^{1}_{\sin}(\QQ_{p}, \rmN_{n})$. Here the depth means the annihilator of the quotient of the rank one module $\rmH^{1}_{\sin}(\QQ_{p}, \rmN_{n})$ by the line  generated by $\partial_{p}(\kappa^{[p]}_{n})$. For this, we will prove the so-called reciprocity formula for $\kappa^{[p]}_{n}$. Let $f^{\dagger}$ be the normalized Jacquet--Langlands transfer of $f$ realized in the space $\Gamma(\rmZ(\overline{\rmB}), \calO)$ of $\calO$-valued functions on the Shimura set $\rmZ(\overline{\rmB})$. There is a natural pairing $\langle\cdot,\cdot\rangle$ on $\Gamma(\rmZ(\overline{\rmB}), \calO)_{\frakm}\times \Gamma(\rmZ(\overline{\rmB}), \calO)_{\frakm}$, we will call the period $\calP(f^{\dagger})=\langle f^{\dagger},f^{\dagger}\rangle$ the {\em Petersson norm} of $f^{\dagger}$ which we will also refer to it as the {\em quaternionic period of $f$}. 

\begin{theorem}
Suppose that $p$ is an $n$-admissible prime for $f$ and $\overline{\rho}$ satisfies Assumption \ref{ass1}. 
\begin{enumerate}
\item Then we have an isomorphism
\begin{equation*}
\rmH^{1}_{\mathrm{sin}}(\QQ_{p}, \rmH^{1}(\rmX(\rmB)\otimes\overline{\QQ}_{p},\calO_{n}(1))^{\otimes2}_{\frakm^{[p]}})
\cong \bigoplus\limits^{2}\limits_{i=1}\Gamma(\rmZ(\overline{\rmB}),\calO_{n})^{\otimes 2}_{\frakm}.
\end{equation*}

\item  Let  $\partial^{(i)}_{p}(\kappa^{[p]})$ be projection of $\partial_{p}(\kappa^{[p]})$ to the $i$-th copy in $\bigoplus\limits^{2}\limits_{i=1}\Gamma(\rmZ(\overline{\rmB}),\calO_{n})^{\otimes 2}_{\frakm}$ under the identification in $(1)$, then we have 
\begin{equation}\label{reci-intro}
(\partial^{(i)}_{p}(\kappa^{[p]}), f^{\dagger}\otimes f^{\dagger})=\calP(f^{\dagger}).
\end{equation} 
\end{enumerate}
\end{theorem}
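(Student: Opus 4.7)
The plan is to deduce both assertions from the Cerednik--Drinfeld $p$-adic uniformization of $\rmX(\rmB)$ at $p$, combined with the arithmetic level raising identifications and an explicit computation of the residue map in motivic cohomology along the special fibre.

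\textbf{Part (1).} Since $\rmB=\rmB_{p\rmN^{-}}$ is ramified at $p$, the curve $\rmX(\rmB)$ admits a semistable integral model over $\ZZ_{p}$ whose special fibre is a union of projective lines glued along ordinary double points; after localising at the non-Eisenstein ideal $\frakm^{[p]}$, both the set of irreducible components and the set of double points are parametrised by the Shimura set $\rmZ(\overline{\rmB})$ through Cerednik--Drinfeld. The weight--monodromy filtration yields a short exact sequence
\begin{equation*}
0\rightarrow \Gamma(\rmZ(\overline{\rmB}), \calO_{n})_{\frakm}(1)\rightarrow \rmH^{1}(\rmX(\rmB)\otimes \overline{\QQ}_{p}, \calO_{n}(1))_{\frakm^{[p]}}\rightarrow \Gamma(\rmZ(\overline{\rmB}), \calO_{n})_{\frakm}\rightarrow 0
\end{equation*}
with unramified outer terms, this being the standard $n$-admissible identification. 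Tensoring gives a three-step filtration on the square, whose graded pieces are copies of $\Gamma(\rmZ(\overline{\rmB}),\calO_{n})^{\otimes 2}_{\frakm}$ in Tate weights $0$, $1$, $1$, $2$. Because $p$ is $n$-admissible we have $l\nmid p^{2}-1$, so $\Frob_{p}$ acts on the weight-$0$ and weight-$2$ pieces by eigenvalues making $1-p\Frob_{p}$ invertible; applying the formula $\rmH^{1}_{\mathrm{sin}}(\QQ_{p},W)\cong W(-1)^{\Frob_{p}=1}$ for unramified $W$, the outer pieces are killed and each of the two weight-$1$ middle pieces contributes one copy of $\Gamma(\rmZ(\overline{\rmB}),\calO_{n})^{\otimes 2}_{\frakm}$, producing the claimed isomorphism.

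\textbf{Part (2).} I would compute $\partial_{p}(\kappa^{[p]})$ via the localisation sequence in motivic cohomology for the integral model of $\rmX(\rmB)^{2}$ over $\ZZ_{p}$. Under the boundary
\begin{equation*}
\rmH^{3}_{\calM}(\rmX(\rmB)^{2}_{\QQ_{p}},\ZZ(2))\rightarrow \rmH^{2}_{\calM}(\rmX(\rmB)^{2}_{\FF_{p}},\ZZ(1)),
\end{equation*}
the pair $(\theta_{\ast}\rmX(\rmB), p)$ is sent to the cycle class of $\theta_{\ast}\rmX(\rmB)^{\ss}$ weighted by the order of vanishing of the constant function $p$ along each vertical component of the special fibre. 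Since each such component appears with multiplicity one, this boundary equals (up to the normalisations inherent to the Chern character) the diagonal class $\theta_{\ast}\mathbf{1}_{\rmZ(\overline{\rmB})}$ in $\Gamma(\rmZ(\overline{\rmB})\times \rmZ(\overline{\rmB}),\calO_{n})_{\frakm\otimes\frakm}$, viewed in both of the summands supplied by part (1). Evaluating the natural pairing then yields
\begin{equation*}
(\partial^{(i)}_{p}(\kappa^{[p]}), f^{\dagger}\otimes f^{\dagger})=\sum_{x\in \rmZ(\overline{\rmB})} f^{\dagger}(x)f^{\dagger}(x) = \langle f^{\dagger},f^{\dagger}\rangle = \calP(f^{\dagger}).
\end{equation*}

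\textbf{Main obstacle.} The decisive technical step is the explicit compatibility of the Chern character in the coniveau spectral sequence with the $p$-adic uniformisation: one must show that the motivic residue at $p$ transports, under the identifications of part (1), to the diagonal class $\theta_{\ast}\mathbf{1}_{\rmZ(\overline{\rmB})}$ in each of the two middle summands with the correct coefficient. Separating these two summands, which correspond respectively to the ``character group $\otimes$ component group'' and ``component group $\otimes$ character group'' factors in the Künneth decomposition of the monodromy filtration on the tensor square, requires careful bookkeeping with the sign and Tate-twist conventions of Cerednik--Drinfeld and of the level-raising isomorphism, and it is precisely this symmetry that forces both $\partial^{(1)}_{p}(\kappa^{[p]})$ and $\partial^{(2)}_{p}(\kappa^{[p]})$ to pair to the same value $\calP(f^{\dagger})$.
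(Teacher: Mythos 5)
Your part (1) follows the paper's route: the split weight--monodromy short exact sequence furnished by level raising (Proposition \ref{split}), tensor-squared, gives four graded pieces, and the $n$-admissibility condition $l\nmid p^{2}-1$ kills the outer two after applying the unramified formula for $\rmH^{1}_{\sin}$, leaving the two middle copies of $\Gamma(\rmZ(\overline{\rmB}),\calO_{n})^{\otimes 2}_{\frakm}$. One point you elide, however: that split sequence is a sequence of $\rmG_{\FF_{p^{2}}}$-modules only, not $\rmG_{\FF_{p}}$-modules, because $\Gal(\FF_{p^{2}}/\FF_{p})$ interchanges the $\circ$ and $\bullet$ components of the special fibre. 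The paper therefore first computes $\rmH^{1}_{\sin}$ over $\QQ_{p^{2}}$ and then descends to $\QQ_{p}$, using that $\Gal(\FF_{p^{2}}/\FF_{p})$ acts on the relevant part by $\rmU_{p}\otimes\rmU_{p}=\epsilon_{p}(f)^{2}=1$. You should include this descent step rather than applying the $\Frob_{p}$-invariants formula directly.

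Part (2) contains a genuine gap. You propose to compute the residue via the localisation sequence ``for the integral model of $\rmX(\rmB)^{2}$ over $\ZZ_{p}$,'' i.e.\ for $\interX(\rmB)^{2}$. But $\interX(\rmB)^{2}$ is \emph{not regular}: the product of two regular semi-stable curves acquires ordinary double-point--type singularities in codimension two along the product of the singular loci. The compatibility of the Chern character in the coniveau spectral sequence with the cycle class map on the special fibre --- exactly what you flag as the ``decisive technical step'' --- is available only for a regular pair (Weston's \cite[Theorem 3.1.1]{Weston1} is stated for a smooth pair, and the paper adapts it to the regular semi-stable case). The missing idea is that one must first replace $\interX(\rmB)^{2}$ by the blow-up $\frakY(\rmB)$ along one of the four irreducible components of its special fibre (here $\overline{\rmQ}^{\circ\circ}$); this $\frakY(\rmB)$ is regular with semi-stable reduction, and only then does the commutative diagram relating $\mathrm{div}_{p}$, the cycle class map, and the singular quotient make sense. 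One must also track, through Corollary \ref{middle-deg} and Remarks \ref{sub-quotient} and \ref{sin-remark}, which direct summands of $\rmH^{2}(\overline{\rmY}^{(0)}(\rmB),\calO_{n}(1))$ survive localisation at $\underline{\frakm}^{[p]}$ and map onto $\bigoplus^{2}_{i=1}\Gamma(\rmZ(\overline{\rmB}),\calO_{n})^{\otimes 2}_{\frakm}$. Your identification of the residue as the pushforward diagonal class $\vartheta_{\ast}\mathbf{1}_{\rmZ(\overline{\rmB})}$ and the resulting pairing $\sum_{x}f^{\dagger}(x)^{2}=\calP(f^{\dagger})$ are the correct endpoint, but as written they rest on a commutative diagram over a non-regular scheme where the relevant compatibility has not been established.
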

Here the pairing on the left hand side of \ref{reci-intro} is the natural one on 
$\Gamma(\rmZ(\overline{\rmB}),\calO_{n})^{\otimes 2}_{\frakm}\times \Gamma(\rmZ(\overline{\rmB}),\calO_{n})^{\otimes 2}_{\frakm}$
induced by that of $\langle\cdot,\cdot\rangle$. This theorem is  enough to show that the depth of $\partial_{p}(\kappa^{[p]}_{n})$ in $\rmH^{1}_{\sin}(\QQ_{p}, \rmN_{n})$ is given by $\eta=\varpi^{\nu}$ with $\nu=\ord_{\lambda}(\langle f^{\dagger}, f^{\dagger}\rangle)$. For the other places, it can be shown that the $\res_{v}(\kappa^{[p]}_{n})$ lies in the spaces defining the local conditions defining the Bloch--Kato Selmer group. Then we can apply an Euler system argument as developed in \cite{Flach}, \cite{Weston1} to show the finite Bloch--Kato Selmer group $\rmH^{1}_{f}(\QQ, \rmM_{n})$ is annihilated by $\eta$ . Then we use this argument for each $n$ and we obtain that $\rmH^{1}_{f}(\QQ, \Ad^{0}(\calM_{\rho}))$ is annihilated by $\eta$.  To prove the main result, we are left to provide a lower bound for $\rmH^{1}_{f}(\QQ, \Ad^{0}(\calM_{\rho}))$, this is done by using the bound provided by the Taylor--Wiles method. More precisely, we introduce a smaller Selmer group $\rmH^{1}_{\calS}(\QQ, \Ad^{0}(\calM_{\rho}))$ following works of \cite{Lun}, \cite{KO} using an idea going back to Khare in \cite{Khare}. As a consequence of the Taylor--Wiles method one can prove a refined $\rmR=\rmT$ theorem which implies that this Selmer group has length equal to $\ord_{\lambda}(\eta(\rmN^{+},\rmN^{-}))$ for some congruence number $\eta(\rmN^{+},\rmN^{-})$ that detects congruences between $f$ and modular forms in $\rmS_{2}(\Gamma_{0}(\rmN))$ which are new at primes dividing $\rmN^{-}$. On the other hand, as a consequence of the freeness of $\Gamma(\rmZ(\overline{\rmB}),\calO)_{\frakm}$ over a suitable localized Hecke algebra. The congruence number $\eta(\rmN^{+},\rmN^{-})$  can be chosen to be $\eta$. Thus $\ord_{\lambda}(\eta)$ also provides a lower bound for $\leng_{\calO}\phantom{.}\rmH^{1}_{f}(\QQ, \Ad^{0}(\calM_{\rho}))$ and hence $\rmH^{1}_{f}(\QQ, \Ad^{0}(\calM_{\rho}))$ has length equal to  $\ord_{\lambda}(\eta)$.

From the above sketch, it follows that the elements $\{\kappa^{[p]}_{n}\}$ with $n$ varying and $p$ being an $n$-admissible prime  form a partial geometric Euler system of depth $\eta$ in the sense of Mazur--Weston \cite{Weston1}. It is also clear that arithmetic level raising and hence Jacquet--Langlands correspondence plays a prominent role in the construction and therefore we say the elements $\{\kappa^{[p]}_{n}\}$ form a Flach system of Jacquet--Langlands type. In a very analogues setting, we will show that such Flach system exists for certain quaternionic Hilbert modular surfaces \cite{Wang2}. In these two cases, the cycles are both given by Shimura curves which admit Drinfeld uniformization at an auxiliary prime and this makes the proof much easier as the rational function cuts out the entire special fiber which agree with the supersingular locus of the Shimura curve at this auxiliary prime. However we believe this is not a serious constraint and that Flach system of Jacquet--Langlands type should exist for adjoint motives appearing in the cohomology of product of Shimura varieties whenever the principle of Jacquet--Langlands transfer can be realized geometrically. For example, we will treat the case of product of unitary Shimura varieties studied in \cite{LTXZZ} and the case of product of quaternionic unitary Shimura varieties \cite{Wang3} studied by the author himself in a future work.

\subsection{Notations and conventions} We will use common notations and conventions in algebraic number theory and algebraic geometry. The cohomologies appeared in this article will be understood as the \'{e}tale cohomologies. For a field $\rmK$, we denote by $\overline{\rmK}$ the separable closure of $\rmK$ and put $\rmG_{\rmK}=\Gal(\overline{\rmK}/\rmK)$ the Galois group of $\rmK$. We let $\Adel$ be the ring of ad\`{e}les over $\QQ$ and $\Adel^{\infty}$ be the subring of finite ad\`{e}les.  For a prime $p$, $\Adel^{\infty, p}$ is the prime-to-$p$ part of  $\Adel^{\infty}$. 

When $\rmK$ is a local field, we denote by $\calO_{\rmK}$ its valuation ring  and by $k$ its residue field. We let $\rmI_{\rmK}$ be the inertia subgroup of $\rmG_{\rmK}$. For a $\rmG_{\rmK}$-module $\rmM$, 
\begin{enumerate}
\item the finite part $\rmH^{1}_{\mathrm{fin}}(\rmK, \rmM)$ of $\rmH^{1}(\rmK, \rmM)$ is defined to be $\rmH^{1}(k, \rmM^{\rmI_{\rmK}})$ 
\item the singular part $\rmH^{1}_{\mathrm{sin}}(\rmK, \rmM)$ of $\rmH^{1}(\rmK, \rmM)$ is defined to be the quotient of $\rmH^{1}(\rmK, \rmM)$ by the image of $\rmH^{1}_{\mathrm{fin}}(\rmK, \rmM)$ in $\rmH^{1}(\rmK, \rmM)$ via inflation.
\item Let $x$ be an element of $\rmH^{1}(\rmK, \rmM)$, we call the image of $x$ in $\rmH^{1}_{\mathrm{sin}}(\rmK, \rmM)$ the singular residue of $x$. 
\end{enumerate}
Let $\rmK$ be a number field and let $\rmK_{v}$ be the completion of $\rmK$ at a place $v$. Suppose $x\in\rmH^{1}(\rmK,\rmM)$, then we will write $\res_{v}(x)$ the image of $x$ under the restriction map 
\begin{equation*}
\res_{v}:\rmH^{1}(\rmK,\rmM)\rightarrow \rmH^{1}(\rmK_{v}, \rmM).
\end{equation*}
We will write $\partial_{v}(x)$ be the image of $x$ in $ \rmH^{1}_{\mathrm{sin}}(\rmK_{v}, \rmM)$ under the composite map of
\begin{equation*}
\res_{v}:\rmH^{1}(\rmK,\rmM)\rightarrow \rmH^{1}(\rmK_{v}, \rmM)
\end{equation*}
and the natural map $\rmH^{1}(\rmK_{v}, \rmM)\rightarrow \rmH^{1}_{\mathrm{sin}}(\rmK_{v}, \rmM)$.

\section{Monodromy filtration and Shimura curves}

\subsection{Shimura curves and Shimura sets}
Let $\rmN$ be a positive integer with a factorization $\rmN=\rmN^{+}\rmN^{-}$ with $\rmN^{+}$ and $\rmN^{-}$ coprime to each other. We assume that $\rmN^{-}$ is square-free and is a product of odd number of primes. Let $\overline{\rmB}$ be the definite quaternion algebra over $\QQ$ with discriminant $\rmN^{-}$ and let $\rmB$ be the indefinite quaternion algebra over $\QQ$ with discriminant $p\rmN^{-}$. Let $\calO_{\rmB}$ be a maximal order of $\rmB$ and $\calO_{\rmB, \rmN^{+}}$ be an Eichler order of level $\rmN^{+}$ in $\calO_{\rmB}$. We let $\rmG(\rmB)$ be the algebraic group over $\QQ$ given by $\rmB^{\times}$ and $\rmK_{\rmN^{+}}$ be the open compact of $\rmG(\rmB)(\mathbf{A}^{\infty})$ defined by $\widehat{\calO}^{\times}_{\rmB, \rmN^{+}}$. We let $\rmG(\overline{\rmB})$ be the algebraic group over $\QQ$ given by $\overline{\rmB}^{\times}$. Note that we have an isomorphism $\rmG(\rmB)(\mathbf{A}^{\infty, p})\xrightarrow{\sim} \rmG(\overline{\rmB})(\mathbf{A}^{\infty, p})$ and via this isomorphism we will view $\rmK^{p}$ as an open compact subgroup of $\rmG(\rmB)(\mathbf{A}^{\infty, p})$ for any open compact subgroup $\rmK$ of $\rmG(\overline{\rmB})(\mathbf{A}^{\infty})$. Let $d\geq 4$ be a prime such that $d\nmid p\rmN$. We define the open compact subgroup 
\begin{equation*}
\rmK=\rmK_{\rmN^{+}, d}= \{g=(g_{v})_{v}\in \widehat{\calO}^{\times}_{\rmB, \rmN^{+}}: g_{v}\equiv \begin{pmatrix} \ast & \ast\\ 0& 1\\\end{pmatrix}\mod v \text{ for } v=d\}.
\end{equation*}
Let $\rmX(\rmB)=\rmX_{\rmN^{+}, p\rmN^{-}, d}(\rmB)$ be the Shimura curve over $\QQ$ with level $\rmK=\rmK_{\rmN^{+}, d}$ explained above. The complex points of this curve is given by the following double coset
\begin{equation*}
\rmX(\rmB)(\CC)=\rmG(\rmB)(\QQ)\backslash \calH^{\pm} \times \rmG(\rmB)(\mathbf{A}^{\infty})/\rmK.
\end{equation*}
There is a natural model $\mathfrak{X}(\rmB)$ over $\ZZ[1/\rmN d]$ of $\rmX(\rmB)$ which coarsely represents the following moduli problem. Let $\rmS$ be a test scheme over $\ZZ[1/\rmN d]$ and then $\mathfrak{X}(\rmB)(\rmS)$ is the set of triples $(\rmA, \iota, \alpha_{\rmN^{+}d})$ up to isomorphism where
\begin{enumerate}
\item $\rmA$ is an abelian scheme over $\rmS$ of relative dimension $2$;
\item $\iota: \calO_{\rmB}\hookrightarrow \End(\rmA)$ is an $\calO_{\rmB}$-action on $\rmA$;
\item $\alpha_{\rmN^{+}d}$ is a $\rmK_{\rmN^{+}, d}$-level structure in the sense of \cite{Buzzard}: $\alpha_{\rmN^{+},d}$ is an orbit of the set of isomorphisms
\begin{equation*}
\alpha_{\rmN^{+}, d}: (\calO_{\rmB}\otimes\ZZ/\rmN^{+}d)_{\rmS}\cong \rmA[\rmN^{+}d]
\end{equation*} 
under the action of $\rmK=\rmK_{\rmN^{+}, d}$.
\end{enumerate}
One can also think of the $\rmK_{\rmN^{+}, d}$-level structure as the a pair $(\rmC_{\rmN^{+}}, \alpha_{d})$ consisting of the following datum:
\begin{enumerate}
\item $\rmC_{\rmN^{+}}$ is a finite flat subgroup scheme of $\rmA[\rmN^{+}]$ of order $(\rmN^{+})^{2}$ which is stable and locally cyclic under the action of $\calO_{\rmB}$;
\item $\alpha_{d}: (\ZZ/d\ZZ)^{\oplus 2}_{\rmS}\rightarrow \rmA[d]$ is an $\calO_{\rmB}$-equivariant injection of group schemes over $\rmS$. 
\end{enumerate}
It is well known this moduli problem is representable by a projective scheme over $\ZZ[1/p\rmN d]$ of relative dimension $1$. 

The  level $d$ structure in the above moduli problem plays a completely auxiliary role in this article. It rigidifies the above moduli problem and hence we get a fine moduli space.  However we are only interested in the space of modular forms of level $\rmK_{\rmN^{+}}$ defined by the Eichler order $\calO_{\rmB, \rmN^{+}}$ of $\calO_{\rmB}$, we will therefore also consider the Shimura curve $\rmX_{\rmN^{+}}(\rmB)$ whose $\CC$-points are given by the double coset space
\begin{equation*}
\rmX_{\rmN^{+}}(\rmB)(\CC)=\rmG(\rmB)(\QQ)\backslash \calH^{\pm} \times \rmG(\rmB)(\mathbf{A}^{\infty})/\rmK_{\rmN^{+}}.
\end{equation*}
This curve has a canonical model over $\QQ$. We will not rely on its integral model which is only defined by a coarse moduli space. There are two natural degeneracy maps 
\begin{equation*}
\pi_{0, d}: \rmX(\rmB)\rightarrow \rmX_{\rmN^{+}}(\rmB) \phantom{aa}\text{and}\phantom{aa}\pi_{1, d}: \rmX(\rmB)\rightarrow \rmX_{\rmN^{+}}(\rmB).
\end{equation*}
We will later choose the prime $d$ carefully so that there are no congruences between modular forms of level  $\rmK_{\rmN^{+}}$ and modular forms of level $\rmK_{\rmN^{+}, d}$ which are new at the prime $d$.

Let $\overline{\rmK}$ be the open compact subgroup of $\rmG(\overline{\rmB})(\mathbf{A}^{\infty})$ given by the Eichler order $\calO_{\overline{\rmB}, \rmN^{+}}$ and the level $d$ as above:
\begin{equation*}
\overline{\rmK}=\overline{\rmK}_{\rmN^{+},d}= \{g=(g_{v})_{v}\in \widehat{\calO}^{\times}_{\overline{\rmB}, \rmN^{+}}: g_{v}\equiv \begin{pmatrix} \ast & \ast\\ 0& 1\\\end{pmatrix}\mod v \text{ for } v=d\}.
\end{equation*}
We define the Shimura set $\rmZ(\overline{\rmB})$ by the following double coset
\begin{equation}\label{Shi-set}
\rmZ(\overline{\rmB})= \rmG(\overline{\rmB})(\QQ)\backslash \rmG(\overline{\rmB})(\mathbf{A}^{\infty})/\overline{\rmK}.
\end{equation}
Let $\overline{\rmK}_{0}(p)$ be the open compact subgroup of $\rmG(\overline{\rmB})(\mathbf{A}^{\infty})$ given by the Eichler order $\calO_{\overline{\rmB}, p\rmN^{+}}$ of level $p\rmN^{+}$ in $\overline{\rmB}$ and the level $d$ as above. We define $\rmZ_{\Iw(p)}(\overline{\rmB})$ to be the  double coset
\begin{equation}\label{Shi-set-p}
\rmZ_{\Iw(p)}(\overline{\rmB})= \rmG(\overline{\rmB})(\QQ)\backslash \rmG(\overline{\rmB})(\mathbf{A}^{\infty})/\overline{\rmK}_{0}(p).
\end{equation}
The role of $d$-level structure is also auxiliary as explained above in the curve case. To work with the genuine level of the modular form $f$, we also introduce the following Shimura sets. Let $\overline{\rmK}_{\rmN^{+}}$ be the open compact subgroup of $\rmG(\overline{\rmB})(\mathbf{A}^{\infty})$ given by the Eichler order $\calO_{\overline{\rmB}, \rmN^{+}}$, then we set
\begin{equation}\label{Shi-set}
\rmZ_{\rmN^{+}}(\overline{\rmB})= \rmG(\overline{\rmB})(\QQ)\backslash \rmG(\overline{\rmB})(\mathbf{A}^{\infty})/\rmK_{\rmN^{+}}.
\end{equation}
Note that there are two degeneracy maps 
\begin{equation*}
\pi_{0, d}: \rmZ(\overline{\rmB})\rightarrow \rmZ_{\rmN^{+}}(\overline{\rmB})\phantom{aa}\text{and}\phantom{aa}\pi_{1, d}: \rmZ(\overline{\rmB})\rightarrow \rmZ_{\rmN^{+}}(\overline{\rmB})
\end{equation*}
by forgetting the level structure at $d$.

Let $\Lambda$ be a ring. The cohomology group $\rmH^{0}(\rmZ(\overline{\rmB}), \Lambda)$ with coefficient in $\Lambda$ is simply the set of continuous function on the set $\rmZ(\overline{\rmB})$ valued in $\Lambda$. We will also write this space as $\Gamma(\rmZ(\overline{\rmB}),\Lambda)$ and refer to it as the space of  {quaternionic modular forms} on $\rmZ(\overline{\rmB})$ with values in $\Lambda$. The same notations apply to the Shimura sets $\rmZ_{\Iw(p)}(\overline{\rmB})$ and $\rmZ_{\rmN^{+}}(\overline{\rmB})$.

We define a bilinear pairing 
\begin{equation*}
\langle\cdot,\cdot\rangle: \Gamma(\rmZ(\overline{\rmB}),\calO)\times \Gamma(\rmZ(\overline{\rmB}),\rmE_{\lambda}/\calO)\rightarrow \calO
\end{equation*}
by the formula
\begin{equation*}
\langle \zeta, \phi\rangle= \sum_{z\in \rmZ(\overline{\rmB})} \vert\Gamma_{z}\vert^{-1}\zeta\phi(z)
\end{equation*}
for any $\zeta\in \Gamma(\rmZ(\overline{\rmB}), \calO)$ and $ \phi\in \Gamma(\rmZ(\overline{\rmB}), \rmE_{\lambda}/\calO)$. Here the finite group $\Gamma_{z}$ is defined by
\begin{equation*}
\overline{\rmB}^{\times}(\QQ)\cap z \rmK^{(p)}_{\rmN^{+}, d}(\calO_{\overline{\rmB}}\otimes\ZZ_{p})^{\times}z^{-1}
\end{equation*}
for any lift of $z$ in $\overline{\rmB}(\Adel^{\infty})$ denoted by the same symbol. Since $\rmK_{\rmN^{+},d}$ is neat, it follows that this factor is trivial and this is the other reason we would like to introduce this auxiliary integer $d$. Therefore this pairing simplifies to
\begin{equation*}
\langle \zeta, \phi\rangle= \sum_{z\in \rmZ(\overline{\rmB})} \zeta\phi(z).
\end{equation*}
The above paring induces an $\calO_{n}$-linear pairing
\begin{equation*}
 \langle\cdot, \cdot\rangle: \Gamma(\rmZ(\overline{\rmB}), \calO_{n})\times  \Gamma(\rmZ(\overline{\rmB}), \calO_{n})\rightarrow \calO_{n}.
\end{equation*}

We can also consider the above moduli problem over $\ZZ_{p^{2}}$  related to the Drinfeld uniformization of  $\rmX(\rmB)$.  We have to modify the second datum in the above moduli problem: we ask the embedding $\iota: \calO_{\rmB}\hookrightarrow \End(\rmA)$ to be special in the sense of \cite[131-132]{BC-unifor}.
We will denote the resulting projective scheme by the same notation $\mathfrak{X}(\rmB)$. Let $\overline{\rmX}(\rmB)$ be the special fiber of $\mathfrak{X}(\rmB)$ over $\FF_{p^{2}}$. The Cerednick--Drinfeld uniformization theorem asserts that the formal scheme ${\mathfrak{X}}(\rmB)^{\wedge}$ can be uniformized by the formal scheme $\breve{\calM}$ known as the Drinfeld upper-half plane:
\begin{equation}\label{p-unifor}
{\mathfrak{X}}(\rmB)^{\wedge} \xrightarrow{\sim} \rmG(\rmB)(\QQ)\backslash \breve{\calM} \times \rmG(\rmB)(\mathbf{A}^{\infty, p})/\rmK^{p}. 
\end{equation}
The above uniformization theorem implies that the model $\mathfrak{X}(\rmB)$ is regular with semi-stable reduction and all irreducible components of the special fiber $\overline{\rmX}(\rmB)$ are projective lines. More precisely, we have the following proposition describing the intersection behaviour of its irreducible components. In this article, we will use the following notations: let $\rmW$ be a scheme, we will then denote by  $\PP^{n}(\rmW)$ the $\PP^{n}$-bundle over $\rmW$. For example, $\PP^{1}(\rmZ(\overline{\rmB}))$ is the $\PP^{1}$-bundle over $\rmZ(\overline{\rmB})$.

\begin{proposition}\label{curve-red}
We have the following descriptions of the scheme $\overline{\rmX}(\rmB)$ in terms of the Shimura sets $\rmZ(\overline{\rmB})$ and $\rmZ_{\Iw(p)}(\overline{\rmB})$.
\begin{enumerate}
\item The scheme $\overline{\rmX}(\rmB)$ is a union of $\PP^{1}$-bundles over Shimura sets 
\begin{equation*}
\overline{\rmX}(\rmB)= \PP^{1}(\rmZ^{\circ}(\overline{\rmB}))\cup \PP^{1}(\rmZ^{\bullet}(\overline{\rmB})). 
\end{equation*}
where both $\rmZ^{\circ}(\overline{\rmB})$ and $\rmZ^{\bullet}(\overline{\rmB})$ are isomorphic to the Shimura set $\rmZ(\overline{\rmB})$. 
\item The two $\PP^{1}$-bundles $\PP^{1}(\rmZ^{\bullet}(\overline{\rmB}))$ and $\PP^{1}(\rmZ^{\circ}(\overline{\rmB}))$ intersect at the set of points which can be identified with the Shimura set
$\rmZ_{\Iw(p)}(\overline{\rmB})$.
This set of points also agree with the set of singular points on $\overline{\rmX}(\rmB)$. 
\end{enumerate}
\end{proposition}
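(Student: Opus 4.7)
The plan is to pass to the special fiber via the Cerednick--Drinfeld uniformization \eqref{p-unifor} and invoke the classical description of the reduction of the Drinfeld upper half plane $\breve{\calM}$ in terms of the Bruhat--Tits tree of $\PGL_2(\QQ_p)$. Recall that Deligne's integral model of $\breve{\calM}$ is regular with reduced special fiber given by a union of projective lines $\PP^1_{\FF_{p^2}}$ indexed by the vertex set $\mathcal{V}(\mathcal{T})$ of the tree $\mathcal{T}$; two components meet transversally at a single ordinary double point precisely when the corresponding vertices are joined by an edge. Since $\mathcal{T}$ is bipartite, its vertex set decomposes as $\mathcal{V}(\mathcal{T}) = \mathcal{V}^{\circ}(\mathcal{T}) \sqcup \mathcal{V}^{\bullet}(\mathcal{T})$; each class is a homogeneous space under $\GL_2(\QQ_p)$ isomorphic to $\GL_2(\QQ_p)/\QQ_p^{\times} \GL_2(\ZZ_p)$, while the edge set is in bijection with $\GL_2(\QQ_p)/\QQ_p^{\times} \Iw(p)$.

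Next, after taking the quotient in \eqref{p-unifor} by $\rmG(\rmB)(\QQ)$ and the prime-to-$p$ level $\rmK^p$, the special fiber $\overline{\rmX}(\rmB)$ becomes the union of two $\PP^1$-bundles over the double coset spaces
\begin{equation*}
\rmZ^{\circ/\bullet}(\overline{\rmB}) := \rmG(\rmB)(\QQ) \backslash \bigl(\mathcal{V}^{\circ/\bullet}(\mathcal{T}) \times \rmG(\rmB)(\Adel^{\infty,p})/\rmK^p\bigr),
\end{equation*}
glued along the image of the edge set under the same group action. The key step is to identify each of these quotients with the definite Shimura set $\rmZ(\overline{\rmB})$, and the set of singular points with $\rmZ_{\Iw(p)}(\overline{\rmB})$. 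This is where the Jacquet--Langlands switch of local invariants intervenes: the stabilizer in $(\rmB\otimes\QQ_p)^{\times}/\QQ_p^{\times}$ of a vertex of $\mathcal{V}^{\circ}$ (resp.\ $\mathcal{V}^{\bullet}$) is the image of the unit group of a maximal order in the division algebra $\rmB\otimes\QQ_p$, and under the canonical isomorphism $\rmG(\rmB)(\Adel^{\infty,p}) \cong \rmG(\overline{\rmB})(\Adel^{\infty,p})$ one replaces the $p$-component accordingly by the maximal compact $(\calO_{\overline{\rmB}}\otimes\ZZ_p)^{\times} = \GL_2(\ZZ_p)$ of the split group $\rmG(\overline{\rmB})(\QQ_p)$. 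The resulting double coset is precisely $\rmZ(\overline{\rmB})$, and an entirely analogous computation with the Iwahori subgroup in place of the maximal compact yields $\rmZ_{\Iw(p)}(\overline{\rmB})$ for the edge quotient.

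The principal obstacle is the bookkeeping for this switch of invariants: one must carefully track the action of the arithmetic group $\rmG(\rmB)(\QQ)$ on $\breve{\calM}$ through its embedding into the units of the division algebra $\rmB\otimes\QQ_p$ and compare it with the natural action of $\GL_2(\QQ_p)$ on $\mathcal{T}$ after switching $\rmB$ for $\overline{\rmB}$ at $p$ and $\infty$. Granted this identification, the remaining assertions of the proposition---that each component is a $\PP^1$-bundle over a Shimura set, that the two bundles meet transversally over the stated Shimura set, and that this intersection coincides with the singular locus of $\overline{\rmX}(\rmB)$---follow at once from the regular semi-stable nature of the Deligne model of $\breve{\calM}$ and the strict normal crossings structure of its special fiber.
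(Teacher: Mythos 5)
The paper's own ``proof'' is nothing more than a pointer to the literature (``well-known\ldots see [Wang, Prop.\ 3.2]''), so you are supplying an actual argument where the author did not. Your overall strategy---reduce to the combinatorics of the special fiber of the Drinfeld formal scheme via the uniformization \eqref{p-unifor} and the Bruhat--Tits tree, then match vertex/edge sets with double coset spaces---is exactly the standard route and the one the cited reference follows, so the approach is sound. The conclusion you reach is correct.

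There is, however, a genuine confusion in the middle step that you should fix. You write that ``the stabilizer in $(\rmB\otimes\QQ_p)^{\times}/\QQ_p^{\times}$ of a vertex of $\mathcal{V}^{\circ}$\ldots is the image of the unit group of a maximal order in the division algebra $\rmB\otimes\QQ_p$.'' This cannot be right: $\rmB\otimes\QQ_p$ is the local division algebra (since $p$ is in the discriminant of $\rmB$), so $(\rmB\otimes\QQ_p)^{\times}/\QQ_p^{\times}$ is compact and does not act transitively (or in any interesting way) on the vertex set of $\mathcal{T}$. The tree $\mathcal{T}$ on which the components are indexed is the Bruhat--Tits tree of $\PGL_2(\QQ_p) = (\overline{\rmB}\otimes\QQ_p)^{\times}/\QQ_p^{\times}$; the arithmetic group $\rmG(\overline{\rmB})(\QQ) = \overline{\rmB}^{\times}(\QQ)$ acts on $\breve{\calM}$ through $\overline{\rmB}^{\times}(\QQ_p)=\GL_2(\QQ_p)$, and the stabilizer of a vertex is a conjugate of $\QQ_p^{\times}\GL_2(\ZZ_p)$, i.e.\ the units of a maximal order in the \emph{split} algebra $M_2(\QQ_p)$, not of the division algebra. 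The division algebra $\rmB\otimes\QQ_p$ enters only through the Drinfeld level structure/Weil descent datum of $\breve{\calM}$, not as the source of vertex stabilizers. Relatedly, your claim that each bipartition class $\mathcal{V}^{\circ/\bullet}(\mathcal{T})$ is a homogeneous space isomorphic to $\GL_2(\QQ_p)/\QQ_p^{\times}\GL_2(\ZZ_p)$ is imprecise: elements of $\GL_2(\QQ_p)$ with $v_p(\det)$ odd swap the two classes, so it is the full vertex set that equals $\GL_2(\QQ_p)/\QQ_p^{\times}\GL_2(\ZZ_p)$, while each class is a homogeneous space only under the index-two subgroup $\{g: v_p(\det g)\in 2\ZZ\}$. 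The double-coset identifications $\rmZ^{\circ/\bullet}\cong\rmZ(\overline{\rmB})$ and (edges)$\;\cong\rmZ_{\Iw(p)}(\overline{\rmB})$ still come out correctly once these are set up carefully, but as written the bookkeeping paragraph conflates $\rmB$ with $\overline{\rmB}$ at $p$ in a way that would not survive scrutiny.

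Two smaller points worth flagging: \eqref{p-unifor} in the paper writes $\rmG(\rmB)(\QQ)$ on the right-hand side, but for the Cerednick--Drinfeld uniformization it should be $\rmG(\overline{\rmB})(\QQ)$ (switch of invariants at $p$ and $\infty$); your displayed formula for $\rmZ^{\circ/\bullet}(\overline{\rmB})$ inherits this. And you should be explicit that the model you work with is over $\ZZ_{p^2}$ (or $\breve{\ZZ}_p$), since only after this base change are the two bipartition classes stable and the components reduced $\PP^1$'s, which is what makes the identification with two disjoint copies of $\rmZ(\overline{\rmB})$ valid.
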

\begin{proof}
This is a well-known result following from the Drinfeld uniformization. For example, see \cite[Proposition 3.2]{Wang}.
\end{proof}

We denote by 
\begin{equation*}
\pi^{\circ}: \rmZ_{\Iw(p)}(\overline{\rmB})\rightarrow \rmZ^{\circ}(\overline{\rmB})\phantom{aa}\text{and}\phantom{aa}\pi^{\bullet}: \rmZ_{\Iw(p)}(\overline{\rmB})\rightarrow \rmZ^{\bullet}(\overline{\rmB})
\end{equation*}
the natural specialization maps provided by $(2)$ of the above proposition. These two maps give the Hecke correspondence of $\rmZ(\overline{\rmB})$ at $p$. The nontrivial element in $\Gal(\FF_{p^{2}}/\FF_{p})$ acts on  $\overline{\rmX}(\rmB)$ and it permutes the two $\PP^{1}$-bundles $\PP^{1}(\rmZ^{\circ}(\overline{\rmB}))$ and $\PP^{1}(\rmZ^{\bullet}(\overline{\rmB}))$. On the set $\rmZ_{\Iw(p)}(\overline{\rmB})$ it acts by the classical {Atkin--Lehner involution}. For this, see \cite[\S 1.7]{BD-Mumford} for example. 

\subsection{Arithmetic level raising on Shimura curves} Let $f\in \rmS_{2}(\Gamma_{0}(\rmN))$ be a newform of weight $2$ with $\rmN$ as before. Let $\rmE=\QQ(f)$ be the Hecke field of $f$. Let $\lambda$ be a place of $\rmE$ above $l$ and $\rmE_{\lambda}$ be the completion of $\rmE$ at $\lambda$. Let $\varpi$ be a uniformizer of $\calO=\calO_{\rmE_{\lambda}}$ and we write $\calO_{n}=\calO/\varpi^{n}$ for $n\geq 1$. We let $\TT=\TT_{\rmN^{+}, \rmN^{-}}$ (respectively  $\TT^{[p]}=\TT_{\rmN^{+}, p\rmN^{-}}$) be the $\lambda$-adic Hecke algebra corresponding to the cusp forms of level $\rmN=\rmN^{+}\rmN^{-}$ (respectively of level $\rmN p=\rmN^{+}\rmN^{-}p$) which is new at primes dividing $\rmN^{-}$ (respectively at primes dividing $p\rmN^{-}$). This means $\TT$ contains the Hecke operators $\rmT_{v}$ for $v\nmid d\rmN$ and contains the Hecke operators $\rmU_{v}$ for $v\mid\rmN^{-}$. Similarly  $\TT^{[p]}$ contains the Hecke operators $\rmT_{v}$ for $v\nmid dp\rmN$ and contains the Hecke operators $\rmU_{v}$ for $v\mid p\rmN^{-}$. Since $f$ is an eigenform, we have a morphism
$\phi_{f}: \TT\rightarrow \calO$
corresponding to the system of Hecke eigenvalues $a_{v}(f)$ of $f$. More precisely, we have  $\phi_{f}(\rmT_{v})=a_{v}(f)$ for $v\nmid \rmN$  and  $\phi_{f}(\rmU_{v})=a_{v}(f)$ for $v\mid \rmN$.

\begin{definition}\label{n-adm}
Let $n\geq 1$ be an integer. We say that a prime $p$ is \emph{$n$-admissible} for $f$ if 
\begin{enumerate}
\item $p\nmid \rmN$ and $l\nmid p^{2}-1$;
\item $\varpi^{n}\mid p+1-\epsilon_{p}(f)a_{p}(f)$ for some $\epsilon_{p}(f)\in\{-1, 1\}$.
\end{enumerate}
\end{definition}

Let $\phi_{f, n}: \TT\rightarrow \calO_{n}$ be reduction of the map $\phi_{f}: \TT\rightarrow \calO$ modulo $\varpi^{n}$. We denote by $\mathfrak{p}_{f, n}$  the kernel of this map and $\mathfrak{m}_{f}$ the unique maximal ideal of $\TT$ containing $\frakp_{f, n}$. We will say $\mathfrak{m}_{f}$ is residually irreducible if the residue Galois representation $\bar{\rho}_{f,\lambda}$ attached to $f$  is absolutely irreducible. We will always assume that $\frakm_{f}$ is absolutely irreducible

\begin{definition}\label{clean}
 We say the auxiliary prime $d$ is \emph{clean} for $\overline{\rho}_{f,\lambda}$ if the degeneracy maps induce an isomorphism
\begin{equation*}
(\pi_{1,d, \ast}, \pi_{2,d, \ast}): \Gamma(\rmZ(\overline{\rmB}),\calO)_{/\frakm_{f}}\xrightarrow{\sim}  \Gamma(\rmZ_{\rmN^{+}}(\overline{\rmB}),\calO)^{\oplus 2}_{/\frakm_{f}}.
\end{equation*}
\end{definition}
Note that by Nakayma's lemma, if  $d$ is {clean} for $\overline{\rho}_{f,\lambda}$, then we have an isomorphism
\begin{equation}
(\pi_{1,d, \ast}, \pi_{2,d, \ast}): \Gamma(\rmZ(\overline{\rmB}),\calO)_{/\frakp_{f, n}}\xrightarrow{\sim}  \Gamma(\rmZ_{\rmN^{+}}(\overline{\rmB}),\calO)^{\oplus 2}_{/\frakp_{f, n}}.
\end{equation}

\begin{lemma}\label{clean-switch}
Suppose the prime $d$ is clean for $\overline{\rho}_{f,\lambda}$, the two degeneracy maps induce an isomorphism
\begin{equation*}
(\pi_{1, d, \ast}, \pi_{2, d,\ast}):  \rmH^{1}(\rmX(\rmB)\otimes\overline{\QQ}_{p}, \calO(1))_{/\frakp^{[p]}_{f,n}}\xrightarrow{\sim}  \rmH^{1}(\rmX_{\rmN^{+}}(\rmB)\otimes\overline{\QQ}_{p}, \calO(1))^{\oplus 2}_{/\frakp^{[p]}_{f,n}}
\end{equation*}
for each $n\geq1$
\end{lemma}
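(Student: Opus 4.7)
The plan is to transfer the cleanness of $d$ from the space of quaternionic modular forms on $\rmZ(\overline{\rmB})$ to the $\lambda$-adic cohomology of $\rmX(\rmB)$ by exploiting the arithmetic level raising available at the $n$-admissible prime $p$. After localization at $\frakp^{[p]}_{f,n}$, the cohomology of both $\rmX(\rmB)$ and $\rmX_{\rmN^{+}}(\rmB)$ should admit descriptions in terms of quaternionic modular forms on $\rmZ(\overline{\rmB})$ and $\rmZ_{\rmN^{+}}(\overline{\rmB})$ respectively; the degeneracy maps $\pi_{0,d}, \pi_{1,d}$ at the auxiliary prime $d$ should then intertwine these two descriptions, reducing the cohomological statement to the cleanness of $d$ on the Shimura-set side.

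First, I would invoke the Cerednik--Drinfeld uniformization for the semistable model $\gothX(\rmB)$ over $\ZZ_{p^{2}}$: by Proposition 1.3, the special fiber $\overline{\rmX}(\rmB)$ is a union of two $\PP^{1}$-bundles over copies of $\rmZ(\overline{\rmB})$ glued along $\rmZ_{\Iw(p)}(\overline{\rmB})$. The weight--monodromy spectral sequence then computes $\rmH^{1}(\rmX(\rmB) \otimes \overline{\QQ}_{p}, \calO(1))$ with graded pieces entirely expressible in terms of these Shimura sets: the weight-$0$ and $2$ pieces come from the $\rmH^{0}$ and $\rmH^{2}$ of the $\PP^{1}$-components and contribute copies of $\Gamma(\rmZ(\overline{\rmB}), \calO)$, while the weight-$1$ piece is controlled by $\Gamma(\rmZ_{\Iw(p)}(\overline{\rmB}), \calO)$. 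After localization at $\frakp^{[p]}_{f,n}$, the $n$-admissibility of $p$ together with the irreducibility of $\overline{\rho}_{f,\lambda}$ should kill the weight-$1$ contribution and yield a Hecke-equivariant identification
\begin{equation*}
\rmH^{1}(\rmX(\rmB) \otimes \overline{\QQ}_{p}, \calO(1))_{/\frakp^{[p]}_{f,n}} \cong \Gamma(\rmZ(\overline{\rmB}), \calO_{n})_{/\frakp_{f,n}}^{\oplus r}
\end{equation*}
for an integer $r$ independent of the auxiliary $d$-level, natural in Hecke correspondences away from $pd$; the same reasoning yields the analogous identification for $\rmX_{\rmN^{+}}(\rmB)$ with $\rmZ_{\rmN^{+}}(\overline{\rmB})$ in place of $\rmZ(\overline{\rmB})$ and with the same rank $r$.

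Next, I would note that since $d$ is coprime to $p$, the degeneracy maps $\pi_{0,d}, \pi_{1,d}\colon \rmX(\rmB) \to \rmX_{\rmN^{+}}(\rmB)$ are étale at $p$ and the Cerednik--Drinfeld uniformization is functorial in the prime-to-$p$ level, so the pushforwards $\pi_{i,d,*}$ on cohomology intertwine the identifications above with the degeneracy maps $\pi_{i,d,*}\colon \Gamma(\rmZ(\overline{\rmB}), \calO_{n}) \to \Gamma(\rmZ_{\rmN^{+}}(\overline{\rmB}), \calO_{n})$ on Shimura sets. Under these identifications the map in the lemma becomes a direct sum of $r$ copies of
\begin{equation*}
(\pi_{1,d,*}, \pi_{2,d,*})\colon \Gamma(\rmZ(\overline{\rmB}), \calO_{n})_{/\frakp_{f,n}} \longrightarrow \Gamma(\rmZ_{\rmN^{+}}(\overline{\rmB}), \calO_{n})_{/\frakp_{f,n}}^{\oplus 2},
\end{equation*}
which is an isomorphism by the Nakayama consequence of the cleanness of $d$ recorded in the displayed equation after Definition 1.5. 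The main technical hurdle will be ensuring that the weight--monodromy computation of the first step genuinely holds for $\rmX_{\rmN^{+}}(\rmB)$, which is only coarsely represented at integral level; I expect this can be handled either by working with the fine model $\rmX(\rmB)$ throughout and descending along the degeneracies at $d$, or by comparing Hecke module ranks via Jacquet--Langlands to bypass the need for an explicit semistable model at level $\rmN^{+}$.
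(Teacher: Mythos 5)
Your proposal and the paper's proof take genuinely different routes, and yours has a circularity problem that the paper's avoids.

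The paper does not attempt any explicit identification of $\rmH^{1}(\rmX(\rmB)\otimes\overline{\QQ}_{p}, \calO(1))_{/\frakp^{[p]}_{f,n}}$ with copies of $\Gamma(\rmZ(\overline{\rmB}),\calO_{n})_{/\frakp_{f,n}}$. Instead it runs a much shorter argument: Ihara's lemma (together with \'etaleness of $\rmX(\rmB) \to \rmX_{\rmN^{+}}(\rmB)$) gives that $(\pi_{1,d,\ast},\pi_{2,d,\ast})$ is \emph{surjective} after reducing modulo the maximal ideal $\frakm^{[p]}$; the Boston--Lenstra--Ribet theorem identifies each side modulo $\frakm^{[p]}$ with a direct sum of copies of $\overline{\rho}_{f,\lambda}$, with multiplicities $d_{1}$ and $d_{1}^{\dagger}$; a Jacquet--Langlands multiplicity comparison (\cite[Lemma 6.4.2]{LTXZZ}) equates these with the mod-$\frakm$ dimensions $d_{2}$, $d_{2}^{\dagger}$ of the corresponding spaces of quaternionic modular forms; and $d$-cleanness gives $d_{2}=2d_{2}^{\dagger}$, hence $d_{1}=2d_{1}^{\dagger}$. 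A surjection between free $k_{\lambda}$-modules of equal dimension is an isomorphism, and Nakayama lifts the statement from $\frakm^{[p]}$ to $\frakp^{[p]}_{f,n}$. The whole proof is a dimension count; no identification of Galois modules with quaternionic modular forms needs to be exhibited, let alone made functorial in $\pi_{i,d}$.

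Your route has two concrete problems. First, the circularity: the explicit description of $\rmH^{1}(\rmX(\rmB)\otimes\overline{\QQ}_{p},\calO_{n}(1))_{\frakm^{[p]}}$ as an extension of $\Gamma(\rmZ(\overline{\rmB}),\calO_{n})_{\frakm}$ by $\Gamma(\rmZ(\overline{\rmB}),\calO_{n}(1))_{\frakm}$ is exactly Proposition \ref{split}, which in turn rests on Lemma \ref{free}, which explicitly invokes Lemma \ref{clean-switch}. So "localize and read off the graded pieces, then transport cleanness" presupposes the very statement it is meant to prove. Second, the weight--monodromy graded pieces are not, as your sketch asserts, copies of $\Gamma(\rmZ(\overline{\rmB}),\calO)$: the surviving $E_{2}$-terms are $\Gamma(\rmZ(\overline{\rmB}),\St(\calO(1)))_{\prim}$ and $\Gamma(\rmZ(\overline{\rmB}),\St(\calO))^{\prim}$, i.e.\ primitive sub/quotient of an induced Steinberg module, and converting those into copies of $\Gamma(\rmZ(\overline{\rmB}),\calO)$ is the content of the freeness-over-$\rmR^{\mix}$ argument in Proposition \ref{split} (which again depends on \ref{clean-switch}). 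The observation that $E_{2}^{0,1}=0$ needs neither $n$-admissibility nor irreducibility; it is just $\rmH^{1}(\PP^{1})=0$. Finally, your closing remark that one might "compare Hecke module ranks via Jacquet--Langlands" is essentially the step the paper actually carries out; you should promote that to the central mechanism rather than a fallback, and pair it with Ihara's lemma to get surjectivity, which your sketch never supplies.
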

\begin{proof}
By Ihara's lemma \cite[Theorem 1]{DT} and the fact that $\rmX(\rmB)$ is \'etale over $\rmX_{\rmN^{+}}(\rmB)$, the map 
\begin{equation*}
(\pi_{1, d, \ast}, \pi_{2, d,\ast}):  \rmH^{1}(\rmX(\rmB)\otimes\overline{\QQ}_{p}, \calO(1))_{/\frakm^{[p]}}\xrightarrow{\sim}  \rmH^{1}(\rmX_{\rmN^{+}}(\rmB)\otimes\overline{\QQ}_{p}, \calO(1))^{\oplus 2}_{/\frakm^{[p]}}
\end{equation*}
is surjective. By the main result of \cite{BLR2}, we have the following
\begin{enumerate}
\item $\rmH^{1}(\rmX(\rmB)\otimes{\overline{\QQ}_{p}}, \calO(1))_{/\frakm^{[p]}_{f}}\cong \overline{\rho}^{\oplus d_{1}}_{f,\lambda}$ for some positive integer $d_{1}$; 
\item $\rmH^{1}(\rmX_{\rmN^{+}}(\rmB)\otimes{\overline{\QQ}_{p}}, \calO(1))_{/\frakm^{[p]}_{f}}\cong \overline{\rho}^{\oplus d^{\dagger}_{1}}_{f,\lambda}$ for some positive integer $d^{\dagger}_{1}$. 
\end{enumerate}
Suppose that $\dim_{k_{\lambda}}\Gamma(\rmZ(\overline{\rmB}),\calO)_{/\frakm_{f}}=d_{2}$ and $\dim_{k_{\lambda}}\Gamma(\rmZ_{\rmN^{+}}(\overline{\rmB}),\calO)_{/\frakm_{f}}=d^{\dagger}_{2}$. Then $d_{1}=d_{2}$ and $d^{\dagger}_{1}=d^{\dagger}_{2}$ by \cite[Lemma 6.4.2]{LTXZZ}. By the definition of $d$-cleaness in Definition \ref{clean}, $d_{2}=2d^{\dagger}_{2}$ and hence $d_{1}=2d^{\dagger}_{1}$. The result follows by Nakayama's lemma.
\end{proof}

The following result is known as the ramified arithmetic level raising theorem for Shimura curves and was first proved in \cite[Theorem 5.15, Corollary 5.18]{BD-Main} and see \cite[Theorem 3.4]{Wang} for a proof using Rapoport--Zink spectral sequence. 

\begin{theorem}\label{level-raise-curve}
Let $p$ be an $n$-admissible prime for $f$. We assume that the residual Galois representation $\bar{\rho}_{f,\lambda}$ satisfies Assumption \ref{ass1}.
Then there exists a surjective homomorphism $\phi^{[p]}_{f, n}: \TT^{[p]}\rightarrow \calO_{n}$ such that $\phi^{[p]}_{f, n}$ agrees with $\phi_{f, n}$ at all Hecke operators away from $p$ and sends $\rmU_{p}$ to $\epsilon_{p}(f)$. We will denote by $\frakp^{[p]}_{f, n}$ the kernel of $\phi^{[p]}_{f, n}$ and by $\frakm^{[p]}_{f}$ the unique maximal ideal containing $\frakp^{[p]}_{f, n}$. 
Moreover we have an isomorphism of $\calO_{n}$-modules 
\begin{equation*}
\Psi_{n}: \Gamma(\rmZ(\overline{\rmB}), \calO)_{/\frakp_{f, n}}\cong\rmH^{1}_{\sin}(\QQ_{p^{2}}, \rmH^{1}(\rmX(\rmB)\otimes{\overline{\QQ}_{p}}, \calO(1))_{/\frakp^{[p]}_{f, n}}).
\end{equation*}
\end{theorem}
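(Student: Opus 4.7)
The plan is to analyze the nearby cycles of $\rmX(\rmB)$ at $p$ using the Drinfeld uniformization recorded in Proposition \ref{curve-red}, which equips $\mathfrak{X}(\rmB)$ with strictly semistable reduction over $\ZZ_{p^2}$. Specifically, I would apply the Rapoport--Zink weight spectral sequence (equivalently, the monodromy exact sequence for a strictly semistable curve) to obtain a Galois-equivariant short exact sequence
\begin{equation*}
0 \to \Gamma(\rmZ_{\Iw(p)}(\overline{\rmB}), \calO)/\mathrm{im}\, \alpha \to \rmH^{1}(\rmX(\rmB)\otimes \overline{\QQ}_{p}, \calO(1)) \to \ker \beta \to 0,
\end{equation*}
where $\alpha,\beta$ are the pullback and pushforward maps induced by $(\pi^{\circ},\pi^{\bullet})$ between $\Gamma(\rmZ^{\circ}(\overline{\rmB}),\calO)\oplus\Gamma(\rmZ^{\bullet}(\overline{\rmB}),\calO)$ and $\Gamma(\rmZ_{\Iw(p)}(\overline{\rmB}),\calO)$. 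The first summand is the toric (monodromy) part on which inertia acts through the tame quotient, and the second is the Frobenius-semisimple part on which $\rmI_{\QQ_{p^2}}$ acts trivially.

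Next I would localize at the maximal ideal $\frakm^{[p]}_{f}$ of $\TT^{[p]}$. Under Assumption \ref{ass1}(1) and residual irreducibility of $\frakm_{f}$, the Eisenstein contributions disappear, and Proposition \ref{curve-red} together with the Jacquet--Langlands correspondence identifies each of $\Gamma(\rmZ^{\circ}(\overline{\rmB}),\calO)$, $\Gamma(\rmZ^{\bullet}(\overline{\rmB}),\calO)$ with copies of $\Gamma(\rmZ(\overline{\rmB}),\calO)$ after localizing at the ideal cut out by $\phi_{f,n}$. The Atkin--Lehner involution interchanges the two components and acts as the Frobenius $\Frob_{p^2}$-square-root on the toric part. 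To read off $\rmH^{1}_{\sin}(\QQ_{p^{2}},-)$, I would use the identification
\begin{equation*}
\rmH^{1}_{\sin}(\QQ_{p^{2}}, M) \cong M^{\rmI_{\QQ_{p^{2}}}}/(\Frob_{p^{2}}-1),
\end{equation*}
valid for finite $\calO$-modules $M$, and note that it kills the weight-zero Frobenius-fixed part (since $\Frob_{p^2}$ acts by $p^2$ there and $l \nmid p^2-1$ by Definition \ref{n-adm}(1) would make this invertible after tensoring, but here we exploit that on $\ker\beta/\frakp$ the characteristic polynomial of $\Frob_p$ factors as $(X-\epsilon_p(f))(X-p\epsilon_p(f))\bmod\varpi^n$). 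The singular quotient is therefore concentrated in the toric piece, yielding a natural isomorphism with $\Gamma(\rmZ(\overline{\rmB}),\calO)_{/\frakp_{f,n}}$.

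To produce the Hecke morphism $\phi^{[p]}_{f,n}$, I would observe that $\TT^{[p]}$ acts on the cohomology and on the toric graded piece, where $\rmU_{p}$ coincides with the geometric Frobenius on the Shimura set up to sign (the Atkin--Lehner involution). The $n$-admissibility condition $\varpi^{n}\mid p+1-\epsilon_{p}(f)a_{p}(f)$ combined with the Eichler--Shimura congruence at auxiliary primes $v\neq p$ forces the image of $\TT^{[p]}\to \End_{\calO_n}(\Gamma(\rmZ(\overline{\rmB}),\calO)_{/\frakp_{f,n}})$ to be generated by the image of $\phi_{f,n}$ on classical Hecke operators together with $\epsilon_{p}(f)$, giving the desired surjection $\phi^{[p]}_{f,n}$ with $\frakp^{[p]}_{f,n}=\ker\phi^{[p]}_{f,n}$.

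The main obstacle is the second part: identifying $\ker\beta/\frakp^{[p]}_{f,n}$ within the weight-monodromy sequence and checking that after localization the sequence splits (or at least that the singular quotient of the full $\rmH^1$ is exactly the toric piece). This requires the Ihara-type input used in Lemma \ref{clean-switch}, together with the computation that $\Frob_{p^2}-1$ is invertible on the weight-zero part modulo $\frakp^{[p]}_{f,n}$ --- which is precisely where the hypothesis $l\nmid p^{2}-1$ and $n$-admissibility come together. Once these are in place the displayed isomorphism $\Psi_{n}$ follows, and the whole argument can be compared with the Rapoport--Zink spectral sequence computation in \cite[Theorem 3.4]{Wang} or the original approach in \cite[Theorem 5.15]{BD-Main}.
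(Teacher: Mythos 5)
Your overall route---Drinfeld uniformization giving semistable reduction, the Rapoport--Zink weight spectral sequence, localization at $\frakm^{[p]}_f$, and then reading off the singular quotient---is exactly the approach of the references the paper itself invokes for this theorem (\cite[Theorem 3.4]{Wang} and \cite[Theorem 5.15, Corollary 5.18]{BD-Main}; the paper gives no independent proof). So the strategy is the right one.

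However, there is a concrete error in the last step. You use the identification
\begin{equation*}
\rmH^{1}_{\sin}(\QQ_{p^{2}}, M) \cong M^{\rmI_{\QQ_{p^{2}}}}/(\Frob_{p^{2}}-1),
\end{equation*}
but the right-hand side is the formula for the \emph{finite} part $\rmH^{1}_{\mathrm{fin}}(\QQ_{p^{2}}, M)=\rmH^{1}(\FF_{p^{2}}, M^{\rmI_{\QQ_{p^{2}}}})$, i.e.\ Frobenius \emph{coinvariants}, not the singular part. For an unramified finite $\calO$-module $M$ of order prime to $p$, inflation--restriction together with tameness gives
\begin{equation*}
\rmH^{1}_{\sin}(\QQ_{p^{2}}, M) \cong \rmH^{1}(\rmI_{\QQ_{p^{2}}}, M)^{\rmG_{\FF_{p^{2}}}} \cong \bigl(M(-1)\bigr)^{\rmG_{\FF_{p^{2}}}},
\end{equation*}
so one must take \emph{invariants of the Tate twist}; compare Remark \ref{sub-quotient}, where the paper records precisely $\rmH^{1}_{\sin}(\QQ_{p^{2}}, \rmH^{1}(\rmX(\rmB)\otimes\overline{\QQ}_p,\calO_n(1))_{\frakm^{[p]}})\cong(\rmH^{1}(\rmX(\rmB)\otimes\overline{\QQ}_p,\calO_n)_{\frakm^{[p]}})^{\rmG_{\FF_{p^{2}}}}$. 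Because $\left|M^{\Frob}\right|=\left|M/(\Frob-1)\right|$ for finite $M$, your formula happens to give an $\calO_n$-module of the correct size, so the existence of \emph{some} isomorphism $\Psi_n$ is not immediately broken; but the two formulas extract opposite graded pieces of the monodromy filtration---the correct one isolates the sub (the $\rmE^{1,0}_{2}$/toric piece fed by $\rmH^{0}$ of the components of $\overline{\rmX}(\rmB)$), while yours isolates the quotient (the $\rmE^{-1,2}_{2}$ piece fed by $\rmH^{2}$). This distinction is used downstream in Remark \ref{sin-remark} and the reciprocity computation of Theorem \ref{reciprocity}, where one must know which stratum of $\overline{\rmY}^{(0)}(\rmB)$ maps onto the singular quotient. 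Relatedly, your parenthetical claim that ``$\Frob_{p^{2}}$ acts by $p^{2}$'' on the ``weight-zero Frobenius-fixed part'' is internally inconsistent (a Frobenius-fixed piece is by definition one on which $\Frob_{p^{2}}$ acts trivially), a symptom of the same twist confusion. Finally, your sketch takes for granted that the monodromy operator vanishes after reduction mod $\frakp^{[p]}_{f,n}$, which is what makes $\rmH^{1}(\rmX(\rmB)\otimes\overline{\QQ}_{p},\calO_n(1))_{\frakm^{[p]}}$ unramified; that fact is not automatic from $n$-admissibility alone but requires the freeness statements of Lemma \ref{free} and the splitting of Proposition \ref{split}.
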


\subsection{Monodromy filtration and deformation ring}
Let $\frakm^{?}=\frakm^{?}_{f}$ and $\frakp^{?}_{n}=\frakp^{?}_{f, n}$ for $?\in\{\emptyset, [p]\}$. We will set $\overline{\rho}=\overline{\rho}_{f,\lambda}$ and denote by $\overline{\rho}_{v}$ the restriction of $\overline{\rho}$ to $\rmG_{\QQ_{v}}$. It will follow from the freeness result of 
$\Gamma(\rmZ(\overline{\rmB}), \calO)_{\frakm}$ and $\rmH^{1}(\rmX(\rmB)\otimes{\overline{\QQ}_{p}}, \calO(1)))_{\frakm^{[p]}}$
over suitable deformation rings that we will discuss below that we can rewrite the above isomorphism as
\begin{equation*}
\Gamma(\rmZ(\overline{\rmB}), \calO_{n})_{\frakm}\xrightarrow{}\rmH^{1}_{\sin}(\QQ_{p^{2}}, \rmH^{1}(\rmX(\rmB)\otimes{\overline{\QQ}_{p}}, \calO_{n}(1))_{\frakm^{[p]}}).
\end{equation*}

Moreover using tools from Galois deformation theory, we can obtain a canonical presentation of 
$\rmH^{1}(\rmX(\rmB)\otimes{\overline{\QQ}_{p}}, \calO_{n}(1))_{\frakm^{[p]}}$
in terms of  $\Gamma(\rmZ(\overline{\rmB}), \calO_{n})_{\frakm}$. We will review this following \cite[\S 3.4]{Wang1} next. 

We consider the decomposition 
\begin{equation*}
\begin{aligned}
\Gamma(\rmZ_{\Iw(p)}(\overline{\rmB}), \calO)&=\Gamma(\rmZ(\overline{\rmB}), \mathrm{Ind}^{\GL_{2}(\ZZ_{p})}_{\Iw(p)}(\calO))\\
&=\Gamma(\rmZ(\overline{\rmB}), \St(\calO))\oplus\Gamma(\rmZ(\overline{\rmB}), \calO).\\
\end{aligned}
\end{equation*}
We define the {primitive part} $\Gamma(\rmZ_{\Iw(p)}(\overline{\rmB}), \calO_{\lambda})^{\prim}$ of $\Gamma(\rmZ_{\Iw(p)}(\overline{\rmB}), \calO_{\lambda})$ using the exact sequence 
\begin{equation}\label{first-prim-sub}
0\rightarrow \Gamma(\rmZ_{\Iw(p)}(\overline{\rmB}), \calO)^{\prim}\rightarrow \Gamma(\rmZ_{\Iw(p)}(\overline{\rmB}), \calO)\xrightarrow{(\pi^{\circ}_{\ast}\oplus \pi^{\bullet}_{\ast})}  \Gamma(\rmZ(\overline{\rmB}), \calO_{\lambda})\oplus \Gamma(\rmZ(\overline{\rmB}), \calO)\rightarrow 0
\end{equation}
Similarly, we define the {primitive quotient} $\Gamma(\rmZ_{\Iw(p)}(\overline{\rmB}), \calO)_{\prim}$ of $\Gamma(\rmZ_{\Iw(p)}(\overline{\rmB}), \calO)$ using the exact sequence 
\begin{equation}\label{second-prim-quo}
0\rightarrow \Gamma(\rmZ(\overline{\rmB}), \calO)\oplus \Gamma(\rmZ(\overline{\rmB}), \calO)\xrightarrow{(\pi^{\circ\ast}\oplus \pi^{\bullet\ast})} \Gamma(\rmZ_{\Iw(p)}(\overline{\rmB}), \calO)\rightarrow  \Gamma(\rmZ_{\Iw(p)}(\overline{\rmB}), \calO)_{\prim}\rightarrow 0.
\end{equation}
The decomposition of $\Gamma(\rmZ_{\Iw(p)}(\overline{\rmB}), \calO)=\Gamma(\rmZ(\overline{\rmB}), \St(\calO))\oplus\Gamma(\rmZ(\overline{\rmB}), \calO)$ allows us to rewrite the first exact sequence \eqref{first-prim-sub} as 
\begin{equation}\label{prim-1}
0\rightarrow \Gamma(\rmZ(\overline{\rmB}), \St(\calO))^{\prim}\rightarrow \Gamma(\rmZ(\overline{\rmB}), \St(\calO))\rightarrow  \Gamma(\rmZ(\overline{\rmB}), \calO)\rightarrow 0
\end{equation}
which allows us to identify the {primitive part} $\Gamma(\rmZ_{\Iw(p)}(\overline{\rmB}), \calO)^{\prim}$ with the primitive part $\Gamma(\rmZ(\overline{\rmB}), \St(\calO))^{\prim}$ of $\Gamma(\rmZ(\overline{\rmB}), \St(\calO))$. The decomposition also allows us to rewrite the second exact sequence \eqref{second-prim-quo} as
\begin{equation}\label{prim-2}
0\rightarrow \Gamma(\rmZ(\overline{\rmB}), \calO)\rightarrow \Gamma(\rmZ(\overline{\rmB}), \St(\calO))\rightarrow  \Gamma(\rmZ(\overline{\rmB}), \St(\calO))_{\prim} \rightarrow 0\\
\end{equation}
which identifies the {primitive quotient}  $\Gamma(\rmZ_{\Iw(p)}(\overline{\rmB}), \calO)_{\prim}$ with the primitive quotient $\Gamma(\rmZ(\overline{\rmB}), \St(\calO))_{\prim}$ of $\Gamma(\rmZ(\overline{\rmB}), \St(\calO))$. 

The monodromy filtration of $\rmH^{1}(\rmX(\rmB)\otimes\overline{\QQ}_{p}, \calO(1))_{\frakm^{[p]}}$ is calculated using the Rapoport--Zink spectral sequence whose first page is given by
\begin{equation}
\begin{tikzpicture}[thick,scale=0.8, every node/.style={scale=0.8}]
  \matrix (m) [matrix of math nodes,
    nodes in empty cells,nodes={minimum width=5ex,
    minimum height=5ex,outer sep=-5pt},
    column sep=1ex,row sep=1ex]{
                &      &     &     & \\
          2     &  \rmH^{0}(\rmZ_{\Iw(p)}(\overline{\rmB}), \calO(-1)) &  \rmH^{2}(\PP^{1}(\rmZ^{\circ}(\overline{\rmB})), \calO)\oplus\rmH^{2}(\PP^{1}(\rmZ^{\bullet}(\overline{\rmB})), \calO)  & & \\
          1     &       &  \rmH^{1}(\PP^{1}(\rmZ^{\circ}(\overline{\rmB})), \calO)\oplus\rmH^{1}(\PP^{1}(\rmZ^{\bullet}(\overline{\rmB})), \calO)&    & \\
          0     &    &\rmH^{0}(\PP^{1}(\rmZ^{\circ}(\overline{\rmB})), \calO)\oplus\rmH^{0}(\PP^{1}(\rmZ^{\bullet}(\overline{\rmB})), \calO)  &  \rmH^{0}(\rmZ_{\Iw(p)}(\overline{\rmB}), \calO) &\\
    \quad\strut &   -1  &  0  &  1  & \strut \\};
\draw[thick] (m-1-1.east) -- (m-5-1.east) ;
\draw[thick] (m-5-1.north) -- (m-5-5.north) ;
\end{tikzpicture}
\end{equation}
where we have omitted the localization at the maximal ideal $\frakm^{[p]}$. Therefore the monodromy filtration
\begin{equation}\label{mono2}
\begin{aligned}
&0\subset^{\rmE^{1,0}_{2, \frakm^{[p]}}} \rmM_{1}\rmH^{1}(\rmX(\rmB)\otimes{\overline{\QQ}_{p}}, \calO(1))_{\frakm^{[p]}} &\subset^{\rmE^{0,1}_{2, \frakm^{[p]}}} \rmM_{0}\rmH^{1}(\rmX(\rmB)\otimes{\overline{\QQ}_{p}}, \calO(1))_{\frakm^{[p]}}\\
&\subset^{\rmE^{-1,2}_{2, \frakm^{[p]}}} \rmM_{-1}\rmH^{1}(\rmX(\rmB)\otimes{\overline{\QQ}_{p}},\calO(1))_{\frakm^{[p]}}\\
\end{aligned}
\end{equation}
is given by
\begin{equation*}
\begin{aligned}
&\rmE^{1,0}_{2,\frakm^{[p]}}=\Gamma(\rmZ(\overline{\rmB}), \St(\calO(1)))_{\prim}^{\frakm^{[p]}};\\
&\rmE^{0,1}_{2, \frakm^{[p]}}= 0;\\
&\rmE^{-1,2}_{2, \frakm^{[p]}}= \Gamma(\rmZ(\overline{\rmB}), \St(\calO))^{\prim}_{\frakm^{[p]}}.\\
\end{aligned}
\end{equation*} 

For $?=\mathrm{ram}, \mathrm{un}, \mathrm{mix}$, we consider the global deformation problem given by
\begin{equation*}
\calS^{?}=(\overline{\rho}, \chi_{l}, \Sigma^{+}\cup \Sigma^{-}_{\ram}\cup\Sigma^{-}_{\mix}\cup\{p\}\cup\{l\}, \{\calD_{v}\}_{v\in \Sigma^{+}\cup \Sigma^{-}_{\ram}\cup\Sigma^{-}_{\mix}\cup\{p\}\cup\{l\}})
\end{equation*}
where 
\begin{enumerate}
\item for $v\in\Sigma^{+}\cup\Sigma^{-}_{\ram}$, $\calD_{v}$ is the local deformation problem classifying all liftings of  $\overline{\rho}_{v}$ that are minimally ramified;
\item for $v=p$, $\calD_{p}$ is the local deformation problem $\calD^{?}_{p}$ of $\overline{\rho}_{p}$  in the sense of \cite[Definition 3.51]{LTXZZa};
\item for $v\in \Sigma^{-}_{\mix}$, $\calD_{v}$ is the local deformation problem $\calD^{\mathrm{ram}}_{v}$ of $\overline{\rho}_{v}$  in the sense of \cite[Definition 3.51]{LTXZZa};
\item  for $v=l$, $\calD_{l}$ is the local deformation problem $\calD^{\FL}_{l}$ of $\overline{\rho}_{l}$ in the sense of \cite[Definition 3.51]{LTXZZa}.
\end{enumerate}
Then we obtain the global deformation rings ${\rmR}^{\un}$, ${\rmR}^{\ram}$ and ${\rmR}^{\mix}$ with relations given by
\begin{enumerate}
\item ${\rmR}^{\un}={\rmR}^{\mix}/(x)$;
\item ${\rmR}^{\ram}={\rmR}^{\mix}/(s-p^{2})$.
\end{enumerate}
We also define ${\rmR}^{\con}={\rmR}^{\ram}\otimes_{{\rmR}^{\mix}}{\rmR}^{\un}={\rmR}^{\mix}/(x, s-p^{2})$.  Let ${\sfT}^{\ram}$ be the image of $\TT^{[p]}$ in $\End_{\calO}(\rmH^{1}(\rmX(\rmB)\otimes\overline{\QQ}, \calO))$.
By Theorem \ref{level-raise-curve}, we have an isomorphism 
\begin{equation}\label{ram-level-map}
\Psi_{n}:\Gamma(\rmZ(\overline{\rmB}),\calO)_{/\frakp_{n}}\cong \rmH^{1}_{\sin}(\QQ_{p^{2}}, \rmH^{1}(\rmX(\rmB)\otimes\overline{\QQ}_{p}, \calO(1))_{/\frakp^{[p]}_{n}}).
\end{equation}
We know that ${\sfT}^{\ram}_{\frakm^{[p]}}\neq 0$ by this isomorphism.  Let ${\sfT}^{\un}$ be the image of $\TT$ in $\End_{\calO}(\Gamma(\rmZ(\overline{\rmB}),\calO))$. 
\begin{lemma}\label{free}
Suppose that Assumption \ref{ass1} holds and $d$ is clean for $\frakm$. We have the following statements.
\begin{enumerate}
\item We have an isomorphism ${\sfT}^{\un}_{\frakm}\cong{\rmR}^{\un}$ which makes $\Gamma(\rmZ(\overline{\rmB}),\calO)_{\frakm}$ a free ${\rmR}^{\un}$-module of rank two.

\item We have an isomorphism ${\sfT}^{\ram}_{\frakm^{[p]}}\cong{\rmR}^{\ram}$ which makes $\rmH^{1}(\rmX(\rmB)\otimes\overline{\QQ}, \calO)_{\frakm^{[p]}}$ a free ${\rmR}^{\ram}$-module of rank four.
\end{enumerate}
\end{lemma}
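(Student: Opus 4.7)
The plan is to prove both parts via the Taylor--Wiles--Kisin patching method applied to the Hecke module attached to the level-$\rmN^{+}$ Eichler order (i.e.\ without the auxiliary level-$d$ rigidification), and then to upgrade the rank by a factor of two using the cleanness hypothesis on $d$. Concretely, for part (1) I would first establish that $\Gamma(\rmZ_{\rmN^{+}}(\overline{\rmB}),\calO)_{\frakm}$ is free of rank one over $\rmR^{\un}$ with $\rmR^{\un} \cong \sfT^{\un}_{\frakm}$, and then invoke the cleanness isomorphism $(\pi_{1,d,\ast},\pi_{2,d,\ast}): \Gamma(\rmZ(\overline{\rmB}),\calO)_{\frakm} \xrightarrow{\sim} \Gamma(\rmZ_{\rmN^{+}}(\overline{\rmB}),\calO)_{\frakm}^{\oplus 2}$ (valid by Nakayama since it holds modulo $\frakm$ by Definition \ref{clean}) to promote this to freeness of rank two for the $d$-augmented module over the same ring.

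For the level-$\rmN^{+}$ freeness, I would use the Diamond--Fujiwara variant of the Taylor--Wiles argument. Assumption \ref{ass1}(i) produces Taylor--Wiles primes of arbitrary $l$-power depth, and (ii) guarantees the numerical input needed to run the patching construction; assumptions (iii)--(iv) together with the local deformation conditions in $\calS^{\un}$ (minimal at $\Sigma^{+}\cup\Sigma^{-}_{\ram}$, ramified at $\Sigma^{-}_{\mix}$, unramified at $p$, and Fontaine--Laffaille at $l$) are exactly what is needed to identify the local deformation rings of \cite[Definition 3.51]{LTXZZa} as the ones relevant to the Hecke module. The patched module $\rmM_{\infty}$ will then be Cohen--Macaulay over the regular local power series ring $\rmS_{\infty}$, forcing it to be free over $\rmR_{\infty}$ by the Auslander--Buchsbaum formula, and unpatching gives the desired freeness of rank one over $\rmR^{\un}$ together with $\rmR^{\un}\cong \sfT^{\un}_{\frakm}$.

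For part (2), the strategy is parallel but at level $\rmN^{+}$ one patches $\rmH^{1}(\rmX_{\rmN^{+}}(\rmB)\otimes\overline{\QQ},\calO(1))_{\frakm^{[p]}}$ against the deformation problem $\calS^{\ram}$, whose local condition at $p$ is the Steinberg ring $\calD^{\ram}_{p}$ (cutting out $\rmR^{\ram}=\rmR^{\mix}/(s-p^{2})$). The two-dimensionality of $\overline{\rho}$ and the BLR2 input cited in the proof of Lemma \ref{clean-switch} (giving $\rmH^{1}(\rmX_{\rmN^{+}}(\rmB)\otimes\overline{\QQ}_{p},\calO(1))_{/\frakm^{[p]}}\cong \overline{\rho}^{\oplus d_{1}^{\dagger}}$ with $d_{1}^{\dagger}=\dim_{k_{\lambda}}\Gamma(\rmZ_{\rmN^{+}}(\overline{\rmB}),\calO)_{/\frakm}$) match exactly the dimension count required to conclude that the patched module is free of rank two over $\rmR^{\ram}$; equivalently, one can obtain this by transporting the freeness of part (1) across the level-raising isomorphism $\Psi_{n}$ of Theorem \ref{level-raise-curve}, which is Hecke-equivariant and identifies the singular quotient of the ramified side with the unramified module. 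Cleanness of $d$ for $\frakm$, combined with the cleanness-type statement for $\frakm^{[p]}$ already extracted in the proof of Lemma \ref{clean-switch}, then doubles the rank from two to four for $\rmH^{1}(\rmX(\rmB)\otimes\overline{\QQ},\calO)_{\frakm^{[p]}}$.

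The main technical obstacle I expect is the patching step in the ramified setting: one must verify that $\calD^{\ram}_{p}$ has the correct Krull dimension (namely the one making the patched local ring regular of the right dimension) and that the local-global compatibility for $\rho_{f,\lambda}$ at $p$ places the Hecke module in the Steinberg component rather than the principal-series component of the generic fibre. Under Assumption \ref{ass1}, the results of \cite{LTXZZa} on the geometry of $\calD^{\ram}_{p}$ and $\calD^{\FL}_{l}$ supply both ingredients, and the remaining verification is the numerical coincidence of local dimensions, for which the $n$-admissibility of $p$ (Definition \ref{n-adm}) plays the key role by ensuring that the Steinberg deformation ring is a domain of the expected size.
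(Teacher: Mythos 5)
Your proposal follows essentially the same route as the paper: Taylor--Wiles--Kisin patching at the bare level $\rmK_{\rmN^{+}}$ (for both the definite set and the Shimura curve, against $\calS^{\un}$ and $\calS^{\ram}$ respectively), identification of the unrestricted deformation ring with the Hecke algebra, a multiplicity/dimension count via the BLR2 input and \cite[Lemma 6.4.2]{LTXZZ} to fix the generic rank, and then $d$-cleanness to double the rank when passing to level $\rmK_{\rmN^{+},d}$. The one thing left implicit in your write-up is the exact source of the ``rank one'' assertion on the definite side: patching alone yields freeness of some rank $\mu$, and the paper pins $\mu=1$ by invoking \cite[Proposition 6.8]{CH1} (effectively automorphic multiplicity one for $\overline{\rmB}^{\times}$ at level $\rmN^{+}$) rather than deriving it from the patched module directly; your sketched alternative of transporting freeness across $\Psi_{n}$ for part (2) is also not what the paper does, but the primary argument you give matches the paper's.
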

\begin{proof}
The same proof of \cite[Theorem 3.6.3]{LTXZZa} with obvious modifications shows that
\begin{enumerate}
\item $\Gamma(\rmZ_{\rmN^{+}}(\overline{\rmB}),\calO)_{\frakm}$ is a free ${\rmR}^{\un}=\sfT^{\un}_{\frakm}$-module;
\item $\rmH^{1}(\rmX_{\rmN^{+}}(\rmB)\otimes\overline{\QQ}, \calO)_{\frakm^{[p]}}$ is a free ${\rmR}^{\ram}={\sfT}^{\ram}_{\frakm^{[p]}}$-module.
\end{enumerate}
In fact, under the Assumption \ref{ass1}, $\Gamma(\rmZ_{\rmN^{+}}(\overline{\rmB}),\calO)_{\frakm}$ is a free $\sfT^{\un}_{\frakm}$ module of rank $1$ by \cite[Proposition 6.8]{CH1}. Therefore by $d$-cleanness, $\Gamma(\rmZ(\overline{\rmB}),\calO)_{\frakm}$ is free of rank $2$ over 
$\sfT^{\un}_{\frakm}$. By the same proof of Lemma \ref{clean-switch}, it follows that $\rmH^{1}(\rmX_{\rmN^{+}}(\rmB)\otimes\overline{\QQ}, \calO)_{\frakm^{[p]}}$ is a free ${\sfT}^{\ram}_{\frakm^{[p]}}$-module of rank $2$ and hence by Lemma \ref{clean-switch} $\rmH^{1}(\rmX(\rmB)\otimes\overline{\QQ}, \calO)_{\frakm^{[p]}}$ is a free ${\sfT}^{\ram}_{\frakm^{[p]}}$-module of rank $4$.
\end{proof}

It follows from the $n$-admissibility of $p$ that $\rmH^{1}(\rmX(\rmB)\otimes\overline{\QQ}_{p}, \calO_{n}(1))_{\frakm^{[p]}}$ is unramified as a $\rmG_{\QQ_{p}}$-module and thus we can regard it as a $\rmG_{\FF_{p^{2}}}$-module. 

\begin{proposition}\label{split}
There is a canonical split short exact sequence of $\rmG_{\FF_{p^{2}}}$-modules
\begin{equation*}
0\rightarrow\Gamma(\rmZ(\overline{\rmB}),\calO_{n}(1))_{\frakm}\rightarrow \rmH^{1}(\rmX(\rmB)\otimes\overline{\QQ}_{p}, \calO_{n}(1))_{\frakm^{[p]}}\rightarrow \Gamma(\rmZ(\overline{\rmB}),\calO_{n})_{\frakm}\rightarrow 0.
\end{equation*}
\end{proposition}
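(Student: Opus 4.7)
My plan is to derive the short exact sequence from the monodromy filtration on $\rmH^{1}(\rmX(\rmB)\otimes\overline{\QQ}_{p}, \calO_{n}(1))_{\frakm^{[p]}}$ supplied by the Cerednik--Drinfeld uniformization (Proposition \ref{curve-red}). Because $\rmE^{0,1}_{2,\frakm^{[p]}}=0$ in the Rapoport--Zink spectral sequence (\ref{mono2}), the induced filtration collapses to a two-step filtration, producing the short exact sequence
\begin{equation*}
0 \to \Gamma(\rmZ(\overline{\rmB}),\St(\calO_{n}(1)))_{\prim,\frakm^{[p]}} \to \rmH^{1}(\rmX(\rmB)\otimes\overline{\QQ}_{p},\calO_{n}(1))_{\frakm^{[p]}} \to \Gamma(\rmZ(\overline{\rmB}),\St(\calO_{n}))^{\prim}_{\frakm^{[p]}} \to 0.
\end{equation*}
This is automatically a sequence of $\rmG_{\FF_{p^{2}}}$-modules because the $n$-admissibility of $p$ renders the middle term unramified as a $\rmG_{\QQ_{p}}$-module.

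Next I would identify each graded piece with a copy of $\Gamma(\rmZ(\overline{\rmB}),\calO_{n})_{\frakm}$, twisted as needed. The cleanest route is via Theorem \ref{level-raise-curve}: the ramified level-raising isomorphism $\Gamma(\rmZ(\overline{\rmB}),\calO)_{/\frakp_{n}}\cong\rmH^{1}_{\sin}(\QQ_{p^{2}},\rmH^{1}(\rmX(\rmB)\otimes\overline{\QQ}_{p},\calO(1))_{/\frakp^{[p]}_{n}})$ matches quaternionic modular forms against the singular quotient, which in the unramified setting coincides with the top graded piece of the monodromy filtration. Combined with the two primitive exact sequences (\ref{prim-1}), (\ref{prim-2}) and the freeness results of Lemma \ref{free}, this produces canonical isomorphisms $\Gamma(\rmZ(\overline{\rmB}),\St(\calO_{n}(1)))_{\prim,\frakm^{[p]}}\cong\Gamma(\rmZ(\overline{\rmB}),\calO_{n}(1))_{\frakm}$ and $\Gamma(\rmZ(\overline{\rmB}),\St(\calO_{n}))^{\prim}_{\frakm^{[p]}}\cong\Gamma(\rmZ(\overline{\rmB}),\calO_{n})_{\frakm}$, which rewrite the short exact sequence in the form asserted.

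For the splitting, $\Frob_{p^{2}}$ acts as multiplication by $p^{2}$ on $\Gamma(\rmZ(\overline{\rmB}),\calO_{n}(1))_{\frakm}$ through the Tate twist, and trivially on $\Gamma(\rmZ(\overline{\rmB}),\calO_{n})_{\frakm}$. The $n$-admissibility condition $l\nmid p^{2}-1$ forces $p^{2}-1\in\calO_{n}^{\times}$, so the eigenvalues $1$ and $p^{2}$ are distinct units of $\calO_{n}$; the short exact sequence therefore splits canonically as the direct sum of the $(\Frob_{p^{2}}=p^{2})$- and $(\Frob_{p^{2}}=1)$-eigenspaces, giving a $\rmG_{\FF_{p^{2}}}$-equivariant direct sum decomposition. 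I anticipate the main technical obstacle to lie in the second paragraph, where the primitive Steinberg modules must be carefully matched with $\Gamma(\rmZ(\overline{\rmB}),\calO_{n})_{\frakm}$ via Jacquet--Langlands, and one must promote the $\frakp_{n}$-level identification of Theorem \ref{level-raise-curve} to a $\frakm$-localized statement over $\calO_{n}$ using the freeness assertions of Lemma \ref{free}.
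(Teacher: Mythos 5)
Your high-level architecture matches the paper's: start from the Rapoport--Zink/monodromy filtration, which because $\rmE^{0,1}_{2,\frakm^{[p]}}=0$ collapses to a two-step short exact sequence with the primitive Steinberg modules as outer terms, and then identify those outer terms with (twisted) quaternionic modular forms. Your Frobenius-eigenvalue argument for the splitting is also correct, and is in fact more explicit than the paper, which asserts the sequence is split without spelling out why. The gap is in the middle step, and you yourself flag it as the ``main technical obstacle'' without closing it. The primitive exact sequences \eqref{prim-1} and \eqref{prim-2} do not give the isomorphisms you assert: they exhibit $\Gamma(\rmZ(\overline{\rmB}),\calO)$ as a quotient (resp.\ a sub) of $\Gamma(\rmZ(\overline{\rmB}),\St(\calO))$ with primitive kernel (resp.\ cokernel), not as something isomorphic to the primitive part or quotient. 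Worse, the two sides of your proposed identifications live on different quotients of $\rmR^{\mix}$: the primitive Steinberg modules are supported on $\rmR^{\ram}$ while $\Gamma(\rmZ(\overline{\rmB}),\calO)_{\frakm}$ is supported on $\rmR^{\un}$, so there is no direct comparison before passing to a common base. Lemma \ref{free} cannot bridge this either---it records freeness of $\Gamma(\rmZ(\overline{\rmB}),\calO)_{\frakm}$ over $\rmR^{\un}$ and of $\rmH^{1}(\rmX(\rmB)\otimes\overline{\QQ},\calO)_{\frakm^{[p]}}$ over $\rmR^{\ram}$, but nothing about the Steinberg module itself.

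The missing ingredient, which the paper supplies at the outset of its proof, is that $\Gamma(\rmZ(\overline{\rmB}),\St(\calO))_{\frakm^{[p]}}$ is \emph{free over the mixed deformation ring} $\rmR^{\mix}$, a separate patching statement in the style of \cite{LTXZZa} and not a corollary of Lemma \ref{free}. This single free $\rmR^{\mix}$-module recovers $\Gamma(\rmZ(\overline{\rmB}),\calO)_{\frakm}$ after $\otimes_{\rmR^{\mix}}\rmR^{\un}$ and recovers both primitive Steinberg modules after $\otimes_{\rmR^{\mix}}\rmR^{\ram}$. One then tensors the monodromy short exact sequence of free $\rmR^{\ram}$-modules with $\rmR^{\con}=\rmR^{\ram}\otimes_{\rmR^{\mix}}\rmR^{\un}$, which keeps it exact, identifies both outer terms with $\Gamma(\rmZ(\overline{\rmB}),\calO)_{\frakm}\otimes_{\rmR^{\un}}\rmR^{\con}$, and finally reduces modulo $\frakp^{\con}_{n}$ using $\rmR^{\con}/\frakp^{\con}_{n}\cong\calO_{n}$ to land in $\calO_{n}$-coefficients. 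Your appeal to Theorem \ref{level-raise-curve} can at best repackage the identification of the top graded piece---and Remark \ref{sub-quotient} explicitly notes that the isomorphism $\Psi_{n}$ is a \emph{consequence} of this proposition, not an independent input---while it does nothing for the bottom piece, which is precisely where the $\rmR^{\mix}$-freeness of the Steinberg module and the $\rmR^{\con}$ base-change are indispensable.
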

\begin{proof}

The result is first obtained in \cite[Proposition 3.11]{Wang1}, we will review the main ingredients here as some of these will be used later. First, the module $\Gamma(\rmZ(\overline{\rmB}), \mathrm{St}(\calO))_{\frakm^{[p]}}$ is free over ${\rmR}^{\mix}$ using the same method of the proof for \cite[Theorem 3.6.3]{LTXZZa}. It has the following properties
\begin{enumerate}
\item we have canonical isomorphisms
\begin{equation*}
\begin{aligned}
&\Gamma(\rmZ(\overline{\rmB}), \mathrm{St}(\calO))_{\frakm^{[p]}}\otimes_{{\rmR}^{\mix}}{\rmR}^{\un}\cong\Gamma(\rmZ(\overline{\rmB}),(\calO))_{\frakm}\\
&\Gamma(\rmZ(\overline{\rmB}), \mathrm{St}(\calO))_{\frakm^{[p]}}\otimes_{{\rmR}^{\mix}}{\rmR}^{\ram}\cong\Gamma(\rmZ(\overline{\rmB}),\St(\calO))^{\prim}_{\frakm^{[p]}}\\
&\Gamma(\rmZ(\overline{\rmB}), \mathrm{St}(\calO))_{\frakm^{[p]}}\otimes_{{\rmR}^{\mix}}{\rmR}^{\ram}\cong\Gamma(\rmZ(\overline{\rmB}),\St(\calO))_{\prim}^{\frakm^{[p]}};\\
\end{aligned}
\end{equation*}
\item there is a short exact sequence of ${\rmR}^{\ram}$-modules
\begin{equation*}
0\rightarrow\Gamma(\rmZ(\overline{\rmB}),\St(\calO_{\lambda})(1))^{\frakm^{[p]}}_{\prim}\rightarrow \rmH^{1}(\rmX(\rmB)\otimes\overline{\QQ}_{p}, \calO_{\lambda}(1))_{\frakm^{[p]}}\rightarrow \Gamma(\rmZ(\overline{\rmB}),\St(\calO_{\lambda}))_{\frakm^{[p]}}^{\prim}\rightarrow 0.
\end{equation*}
\end{enumerate}

The first property follows from tensoring two the exact sequences in \ref{prim-1} and \ref{prim-2} over ${\rmR}^{\mix}$ by ${\rmR}^{\un}$ and by ${\rmR}^{\ram}$ respectively.  The second part follows directly from the monodromy filtration \eqref{mono2} of $\rmH^{1}(\rmX(\rmB)\otimes\overline{\QQ}_{p}, \calO(1))_{\frakm^{[p]}}$. Here we are relying on the following facts which can be easily seen from the definitions and the Jacquet--Langlands correspondence:
\begin{enumerate}
\item the module $\Gamma(\rmZ(\overline{\rmB}),\St(\calO)(1))^{\frakm^{[p]}}_{\prim}$ and  $\Gamma(\rmZ(\overline{\rmB}),\St(\calO)(1))_{\frakm^{[p]}}^{\prim}$ are supported on the quotient ${\rmR}^{\ram}$ of ${\rmR}^{\mix}$ and they are naturally isomorphic;
\item the module $\Gamma(\rmZ(\overline{\rmB}), \calO(1))_{\frakm}$ is supported on the quotient ${\rmR}^{\un}$ of $\rmR^{\mix}$;
\item the module $\rmH^{1}(\rmX(\rmB)\otimes\overline{\QQ}_{p}, \calO(1))_{\frakm^{[p]}}$ is supported on the quotient $\rmR^{\ram}$ of ${\rmR}^{\mix}$.
\end{enumerate}

Next we tensor the exact sequence in $(2)$ above
by ${\rmR}^{\con}$ over ${\rmR}^{\ram}$. It follows from the freeness of $\rmH^{1}(\rmX(\rmB)\otimes\overline{\QQ}_{p}, \calO(1))_{\frakm^{[p]}}$ over $\rmR^{\ram}$ that $\Gamma(\rmZ(\overline{\rmB}),\St(\calO)(1))^{\frakm^{[p]}}_{\prim}$ and $\Gamma(\rmZ(\overline{\rmB}),\St(\calO))^{\prim}_{\frakm^{[p]}}$ are free over ${\rmR}^{\ram}$. Thus we have a short exact sequence 
\begin{equation*}
0\rightarrow\Gamma(\rmZ(\overline{\rmB}),\calO(1))_{\frakm}\otimes{\rmR}^{\con}\rightarrow \rmH^{1}(\rmX(\rmB)\otimes\overline{\QQ}_{p}, \calO(1))_{\frakm^{[p]}}\otimes{\rmR}^{\con}\rightarrow \Gamma(\rmZ(\overline{\rmB}),\calO)_{\frakm}\otimes{\rmR}^{\con}\rightarrow 0
\end{equation*}
where we have used $(1)$ to identify both
\begin{equation*}
\Gamma(\rmZ(\overline{\rmB}),\St(\calO))^{\frakm^{[p]}}_{\prim}\otimes_{{\rmR}^{\ram}}{\rmR}^{\con}\phantom{aa}\text{and}\phantom{aa}\Gamma(\rmZ(\overline{\rmB}),\St(\calO))_{\frakm^{[p]}}^{\prim}\otimes_{{\rmR}^{\ram}}{\rmR}^{\con}
\end{equation*}
with $\Gamma(\rmZ(\overline{\rmB}),\calO_{\lambda}))_{\frakm}\otimes_{{\rmR}^{\un}} {\rmR}^{\con}$. Let $\frakp^{\con}_{n}$ be the image of $\frakp_{n}$ in ${\rmR}^{\con}$ under the identification ${\rmT}^{\un}\cong {\rmR}^{\un}$. Note we have an isomorphism ${\rmR}^{\con}/\frakp^{\con}_{n}\cong \calO_{n}$. Using the freeness of $\Gamma(\rmZ(\overline{\rmB}),\calO)_{\frakm}$ over ${\rmR}^{\un}$ again, we conclude that we have the desired short exact sequence 
\begin{equation*}
0\rightarrow\Gamma(\rmZ(\overline{\rmB}),\calO_{n}(1))_{\frakm}\rightarrow \rmH^{1}(\rmX(\rmB)\otimes\overline{\QQ}_{p}, \calO_{n}(1))_{\frakm^{[p]}}\rightarrow \Gamma(\rmZ(\overline{\rmB}),\calO_{n})_{\frakm}\rightarrow 0.
\end{equation*}
by reducing the ideal  $\frakp^{\con}_{n}$.
\end{proof}

\begin{remarkwr} \label{sub-quotient}
The following remarks will be used in the following development.
\begin{enumerate}
\item Using this split exact sequence, we obtain
\begin{equation*}
\begin{aligned}
&\rmH^{1}_{\sing}(\QQ_{p^{2}}, \rmH^{1}(\rmX(\rmB)\otimes\overline{\QQ}_{p}, \calO_{n}(1))_{\frakm^{[p]}}) \cong (\rmH^{1}(\rmX(\rmB)\otimes\overline{\QQ}_{p},\calO_{n})_{\frakm^{[p]}})^{\rmG_{\FF_{p^{2}}}}\\
&\cong (\Gamma(\rmZ(\overline{\rmB}),\calO_{n}(1))_{\frakm}\oplus \Gamma(\rmZ(\overline{\rmB}),\calO_{n})_{\frakm})^{\rmG_{\FF_{p^{2}}}} \\
&\cong \Gamma(\rmZ(\overline{\rmB}),\calO_{n})_{\frakm}\\
\end{aligned}
\end{equation*}
which recovers the isomorphism 
\begin{equation*}
\Psi_{n}:\Gamma(\rmZ(\overline{\rmB}),\calO)_{/\frakp_{n}}\cong \rmH^{1}_{\sin}(\QQ_{p^{2}}, \rmH^{1}(\rmX(\rmB)\otimes\overline{\QQ}_{p}, \calO(1))_{/\frakp^{[p]}_{n}})
\end{equation*}
in Therorem \ref{level-raise-curve}.

\item In the presentation of the splitting exact sequence
\begin{equation}\label{sub-quo-split}
0\rightarrow\Gamma(\rmZ(\overline{\rmB}),\calO_{n}(1))_{\frakm}\rightarrow \rmH^{1}(\rmX(\rmB)\otimes\overline{\QQ}_{p}, \calO_{n}(1))_{\frakm^{[p]}}\rightarrow \Gamma(\rmZ(\overline{\rmB}),\calO_{n})_{\frakm}\rightarrow 0,
\end{equation}
the sub-part $\Gamma(\rmZ(\overline{\rmB}),\calO_{n}(1))_{\frakm}$ in \eqref{sub-quo-split}  comes from a quotient of 
\begin{equation*}
\bigoplus\limits_{?\in\{\circ, \bullet\}}\rmH^{0}(\PP^{1}(\rmZ^{?}(\overline{\rmB})), \calO_{n}(1))_{\frakm^{[p]}}
\end{equation*}
and the quotient-part $\Gamma(\rmZ(\overline{\rmB}),\calO_{n})_{\frakm}$ in \eqref{sub-quo-split} comes from a quotient of 
\begin{equation*}
\bigoplus\limits_{?\in\{\circ, \bullet\}}\rmH^{2}(\PP^{1}(\rmZ^{?}(\overline{\rmB})), \calO_{n}(1))_{\frakm^{[p]}}.
\end{equation*}
\end{enumerate}
\end{remarkwr}

\section{Construction of the Flach system}
\subsection{The motivic cohomology group} 
Let $\rmX$ be a proper smooth variety of finite type over a field $\rmF$. For an integer $d$, consider the complex
\begin{equation}\label{M-complex}
\bigoplus_{x\in \rmX^{d-1}}\rmK_{2}k(x)\rightarrow \bigoplus_{x\in \rmX^{d}}k(x)^{\times}\xrightarrow{d_{1}} \bigoplus_{x\in\rmX^{d+1}}\ZZ
\end{equation}
where $\rmX^{i}$ denotes the set of points of codimension $i$ on the variety $\rmX$, $\rmK_{2}k(x)$ is the Milnor $K$-group of the field $k(x)$, the first map is the so-called tame symbol map and the second is given by the divisor map. The \emph{motivic cohomology group} 
\begin{equation*}
\rmH^{2d+1}_{\calM}(\rmX, \ZZ(d+1))
\end{equation*}
is defined to be the cohomology of this complex: elements of $\rmH^{2d+1}_{\calM}(\rmX, \ZZ(d+1))$ are represented by formal sums $\sum_{i}(\rmZ_{i}, f_{i})$ of pairs of codimension $d$ cycles $\rmZ_{i}$ on $\rmX$ and non-zero rational functions $f_{i}$ such that $\sum_{i}\mathrm{div}_{\rmZ_{i}}(f_{i})=0$ as a Weil divisor on $\rmX$. 

This group is also known as the higher Chow group $\mathrm{CH}^{d}(\rmX, 1)$ of $\rmX$.  There is a Chern character map
\begin{equation}\label{chern}
\mathrm{ch}: \rmH^{2d+1}_{\calM}(\rmX, \ZZ(d+1))\rightarrow \rmH^{2d+1}(\rmX, \ZZ_{l}(d+1))
\end{equation}
given by the Chern character map from the coniveau spectral sequence in $K$-theory to that of \'etale cohomology. Next suppose that $\calO$ is a complete local ring with fraction field $\rmF$ and residue field $k$ of characteristic $p$ different from $l$. Let $\interX$ be a proper regular scheme over $\calO$, $\rmX$ be its generic fiber and $\overline{\rmX}$ be its special fiber over $k$. The motivic cohomology $\rmH^{2d}_{\calM}(\overline{\rmX}, \ZZ(d))$ in this case agree with the usual Chow group $\mathrm{CH}^{d}(\overline{\rmX})$. There is a map
\begin{equation}\label{div}
\mathrm{div}: \rmH^{2d+1}_{\calM}(\rmX, \ZZ(d+1))\rightarrow \rmH^{2d}_{\calM}(\overline{\rmX}, \ZZ(d))
\end{equation}
defined by sending a pair $(\rmZ, f)$ to the divisor of $f$ on the closure $\calZ$ of $\rmZ$ in $\interX$. Note that this divisor is purely supported on the special fiber $\overline{\rmZ}$ of $\calZ$.

\subsection{Regular model of the product of Shimura curves} 
Recall we have the integral model $\interX(\rmB)$ which is regular but non-smooth over $\ZZ_{p^{2}}$. We will consider the surface $\interX(\rmB)^{2}$ but this will be not be a regular scheme anymore. We can construct a regular model $\mathfrak{Y}(\rmB)$ of $\rmX(\rmB)^{2}$ and describe its special fiber $\overline{\rmY}(\rmB)$ over $\FF_{p^{2}}$. We will make the following constructions and definitions:
\begin{enumerate}
\item Let $?\in \{\circ, \bullet\}$, we put $\overline{\rmX}^{?}=\PP^{1}(\rmZ^{?}(\overline{\rmB}))$.  Let $\overline{\rmX}^{\dagger}=\rmZ_{\Iw(p)}(\overline{\rmB})$.
\item For $(?_{0}?_{1})\in \{\circ, \bullet, \dagger\}^{2}$, we put $\overline{\rmQ}^{?_{0}?_{1}}=\overline{\rmX}^{?_{0}}\times \overline{\rmX}^{?_{1}}$. 
\item For $(?_{0}?_{1})\in \{\circ, \bullet\}^{2}$, then we note that
\begin{equation*}
\overline{\rmQ}^{?_{0}?_{1}}=\PP^{1}(\rmZ^{?_{0}}(\overline{\rmB}))\times \PP^{1}(\rmZ^{?_{1}}(\overline{\rmB}))
\end{equation*}
are the irreducible components of the special fiber $\overline{\rmX}(\rmB)^{2}$ of  $\interX(\rmB)^{2}$.
\item Let $\sigma: \mathfrak{Y}(\rmB)\rightarrow \interX(\rmB)^{2}$ be the blow-up of $\mathfrak{X}(\rmB)^{2}$ along the closed subscheme $\overline{\rmQ}^{\circ\circ}$. For $(?_{0}?_{1})\in \{\circ, \bullet, \dagger\}^{2}$, let $\overline{\rmY}^{?_{0}?_{1}}$ be the strict transform of $\overline{\rmQ}^{?_{0}?_{1}}$ under $\sigma$. 
\item The dual reduction graph of $\mathfrak{Y}(\rmB)$ is given by
\begin{equation*}
\begin{tikzcd}
\overline{\rmY}^{\bullet\circ} \arrow[dd, "\overline{\rmY}^{\bullet\dagger}"', no head] \arrow[rr, "\overline{\rmY}^{\dagger\circ}", no head]        &  & \overline{\rmY}^{\circ\circ} \arrow[dd, "\overline{\rmY}^{\circ\dagger}", no head] \\
                                                       &  &                            \\
\overline{\rmY}^{\bullet\bullet} \arrow[rr, "\overline{\rmY}^{\dagger\bullet}"', no head] \arrow[rruu, "\overline{\rmY}^{\dagger\dagger}", no head] &  & \overline{\rmY}^{\circ\bullet}                                         
\end{tikzcd}
\end{equation*}
so that we have 
\begin{equation*}
\overline{\rmY}^{(0)}(\rmB)=\overline{\rmY}^{\circ\circ}\sqcup \overline{\rmY}^{\bullet\circ}\sqcup \overline{\rmY}^{\circ\bullet}\sqcup \overline{\rmY}^{\bullet\bullet}
\end{equation*}
and 
\begin{equation*}
\overline{\rmY}^{(1)}(\rmB)=\overline{\rmY}^{\circ\dagger}\sqcup \overline{\rmY}^{\bullet\dagger}\sqcup \overline{\rmY}^{\dagger\bullet}\sqcup \overline{\rmY}^{\dagger\circ}\sqcup \overline{\rmY}^{\dagger\dagger}
\end{equation*}
are the codimension $0$ and codimension $1$ strata of $\overline{\rmY}(\rmB)$.
\end{enumerate}

\begin{lemma}
The scheme $\mathfrak{Y}(\rmB)$ is regular with semi-stable reduction. We have the following descriptions of the strata of $\overline{\rmY}(\rmB)$.
\begin{enumerate}
\item The map $\sigma: \overline{\rmY}^{?_{0}?_{1}}\rightarrow \overline{\rmQ}^{?_{0} ?_{1}}$ for $?_{0}\neq ?_{1}$ is an isomorphism;
\item The map $\sigma: \overline{\rmY}^{?_{0}?_{1}}\rightarrow \overline{\rmQ}^{?_{0}?_{1}}$ for $(?_{0}?_{1})\in \{(\circ\circ), (\bullet\bullet)\}$ is the blow-up of $\overline{\rmQ}^{?_{0}?_{1}}$ along the stratum $\overline{\rmQ}^{\dagger\dagger}$;
\item The stratum $\overline{\rmY}^{\dagger\dagger}$ is a $\PP^{1}$-bundle over $\overline{\rmQ}^{\dagger\dagger}$.
\end{enumerate}
\end{lemma}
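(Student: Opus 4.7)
The plan is to verify all three claims by \'{e}tale-local computation at points of $\overline{\rmQ}^{\dagger\dagger}$---the only locus where $\mathfrak{X}(\rmB)^{2}$ fails to be regular---and then globalize by naturality of the blow-up construction. Away from $\overline{\rmQ}^{\dagger\dagger}$, the blown-up subscheme $\overline{\rmQ}^{\circ\circ}$ is a Cartier divisor in the regular scheme $\mathfrak{X}(\rmB)^{2}$, so $\sigma$ is an isomorphism and the relevant assertions are trivial there. At a point $(q_{1},q_{2})\in\overline{\rmQ}^{\dagger\dagger}$, Proposition~\ref{curve-red} combined with the Cerednik--Drinfeld uniformization~\eqref{p-unifor} gives an \'{e}tale-local isomorphism
\begin{equation*}
\mathfrak{X}(\rmB)^{2}\cong \Spec\calO[x_{1},y_{1},x_{2},y_{2}]/(x_{1}y_{1}-\pi,\ x_{2}y_{2}-\pi),
\end{equation*}
in which $\overline{\rmX}^{\circ}$ is cut out by $y_{i}=0$ and $\overline{\rmX}^{\bullet}$ by $x_{i}=0$ in each factor, and $\overline{\rmQ}^{\circ\circ}$ corresponds to the ideal $(y_{1},y_{2})$. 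This scheme has an isolated ordinary threefold double point at the origin.

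Next I would compute the affine charts of the blow-up. In the chart where $y_{2}=t\,y_{1}$, the relation $x_{2}y_{2}=\pi$ becomes $x_{2}y_{1}t=\pi$, which together with $x_{1}y_{1}=\pi$ forces $y_{1}(x_{1}-x_{2}t)=0$. After killing the $y_{1}$-torsion in the total transform one obtains $x_{1}=x_{2}t$, and the chart becomes
\begin{equation*}
\Spec\calO[y_{1},x_{2},t]/(x_{2}y_{1}t-\pi).
\end{equation*}
The Jacobian criterion confirms this is a three-dimensional regular local ring whose special fiber $\{x_{2}y_{1}t=0\}$ is a strict normal-crossings divisor with smooth components $\{y_{1}=0\}$, $\{x_{2}=0\}$, $\{t=0\}$. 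The symmetric $y_{2}$-chart yields the same picture, so $\mathfrak{Y}(\rmB)$ is regular with semistable reduction.

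To read off the strata descriptions one tracks $\sigma$ on each local component. The component $\{y_{1}=0\}$ forces $y_{2}=y_{1}t=0$ and maps to $\overline{\rmQ}^{\circ\circ}=\Spec k[x_{1},x_{2}]$ via $(x_{2},t)\mapsto(x_{2}t,x_{2})$, exhibiting one affine chart of the blow-up of $\overline{\rmQ}^{\circ\circ}$ at the origin; gluing with the $y_{2}$-chart globalizes this to the blow-up of $\overline{\rmQ}^{\circ\circ}$ along $\overline{\rmQ}^{\dagger\dagger}$. The component $\{x_{2}=0\}$ forces $x_{1}=x_{2}t=0$ and furnishes the analogous local model for $\overline{\rmY}^{\bullet\bullet}\to\overline{\rmQ}^{\bullet\bullet}$ as the blow-up along $\overline{\rmQ}^{\dagger\dagger}$; these two cases together prove (2). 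The component $\{t=0\}$ forces $y_{2}=0$ and $x_{1}=0$, so projects isomorphically onto $\overline{\rmQ}^{\bullet\circ}$; by symmetry $\overline{\rmY}^{\circ\bullet}\cong\overline{\rmQ}^{\circ\bullet}$, proving~(1). An alternative viewpoint is that $\overline{\rmQ}^{\circ\circ}$ meets the smooth divisor $\overline{\rmQ}^{\bullet\circ}$ in the Cartier divisor $\overline{\rmQ}^{\dagger\circ}$ of $\overline{\rmQ}^{\bullet\circ}$, and blowing up a regular scheme along a Cartier divisor is an isomorphism on the strict transform. Finally, the intersection $\overline{\rmY}^{\circ\circ}\cap\overline{\rmY}^{\bullet\bullet}$ is locally $\{y_{1}=x_{2}=0\}=\Spec k[t]$, and gluing with the $y_{2}$-chart produces a $\PP^{1}$ over each point of $\overline{\rmQ}^{\dagger\dagger}$, yielding the $\PP^{1}$-bundle structure of~(3).

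The main technical point is the local blow-up computation: the relation $y_{1}(x_{1}-x_{2}t)=0$ arises only by combining the two defining equations of $\mathfrak{X}(\rmB)^{2}$, and one must carefully kill the $y_{1}$-torsion in the total transform to obtain the regular ring $\calO[y_{1},x_{2},t]/(x_{2}y_{1}t-\pi)$; this is what actually resolves the threefold node. Once the local model is in hand, items (1)--(3) are routine identifications, and globalization is immediate from the fact that both the center $\overline{\rmQ}^{\circ\circ}$ and the singular locus $\overline{\rmQ}^{\dagger\dagger}$ are defined globally and the blow-up commutes with \'{e}tale base change.
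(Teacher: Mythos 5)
Your proposal is correct and carries out precisely the ``standard computations of the blow-up'' that the paper invokes without detail: the key step, the local model $\Spec\calO[x_1,y_1,x_2,y_2]/(x_1y_1-\pi,\,x_2y_2-\pi)$ at a point of $\overline{\rmQ}^{\dagger\dagger}$ obtained from the Cerednik--Drinfeld uniformization, the chart $\Spec\calO[y_1,x_2,t]/(x_2ty_1-\pi)$ with $x_1=x_2t$ and $y_2=ty_1$ obtained by killing $y_1$-torsion, and the identification of the three local components $\{y_1=0\}$, $\{x_2=0\}$, $\{t=0\}$ with $\overline{\rmY}^{\circ\circ}$, $\overline{\rmY}^{\bullet\bullet}$, $\overline{\rmY}^{\bullet\circ}$ respectively, are all correct. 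Since the paper offers only a one-line appeal to this computation being well known, your write-up is essentially the intended proof made explicit, and the side observation that $\overline{\rmQ}^{\circ\circ}\cap\overline{\rmQ}^{\bullet\circ}=\overline{\rmQ}^{\dagger\circ}$ is a Cartier divisor in the smooth surface $\overline{\rmQ}^{\bullet\circ}$ (so its strict transform maps isomorphically) is a clean way to see item~(1).
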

\begin{proof}
This result is well-known following from standard computations of the blow-up.
\end{proof}

\begin{corollary}\label{middle-deg}
The cohomology $\rmH^{2}(\overline{\rmY}^{(0)}(\rmB), \ZZ_{l}(1))$ is given by
\begin{equation*}
\bigoplus\limits_{(?_{0}?_{1})\in \{\circ, \bullet, \dagger\}^{2}}\rmH^{2}(\overline{\rmQ}^{?_{0}?_{1}}, \ZZ_{l}(1))\oplus \rmH^{0}(\rmQ^{\dagger\dagger},\ZZ_{l})^{\oplus2}.
\end{equation*}
\end{corollary}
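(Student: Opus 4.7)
The plan is to use the decomposition $\overline{\rmY}^{(0)}(\rmB)=\overline{\rmY}^{\circ\circ}\sqcup \overline{\rmY}^{\bullet\circ}\sqcup \overline{\rmY}^{\circ\bullet}\sqcup \overline{\rmY}^{\bullet\bullet}$ into disjoint codimension-zero strata and compute $\rmH^{2}(\ZZ_{l}(1))$ of each connected component separately, using the explicit structural description supplied by the preceding lemma. Since disjoint unions pass through cohomology,
\begin{equation*}
\rmH^{2}(\overline{\rmY}^{(0)}(\rmB),\ZZ_{l}(1))=\bigoplus_{(?_{0},?_{1})\in\{\circ,\bullet\}^{2}}\rmH^{2}(\overline{\rmY}^{?_{0}?_{1}},\ZZ_{l}(1)),
\end{equation*}
so everything reduces to a component-by-component analysis.

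For the two off-diagonal components ($?_{0}\neq ?_{1}$), part (1) of the lemma says that $\sigma\colon \overline{\rmY}^{?_{0}?_{1}}\to \overline{\rmQ}^{?_{0}?_{1}}$ is an isomorphism, hence $\rmH^{2}(\overline{\rmY}^{?_{0}?_{1}},\ZZ_{l}(1))\cong \rmH^{2}(\overline{\rmQ}^{?_{0}?_{1}},\ZZ_{l}(1))$, and these indices contribute no extra terms. For the two diagonal components, part (2) of the lemma identifies $\sigma\colon \overline{\rmY}^{?_{0}?_{0}}\to \overline{\rmQ}^{?_{0}?_{0}}$ with the blow-up of the smooth surface $\overline{\rmQ}^{?_{0}?_{0}}$ along the zero-dimensional (hence smooth, codimension-two) closed subscheme $\overline{\rmQ}^{\dagger\dagger}$. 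I would then invoke the classical blow-up formula in \'etale cohomology: for a blow-up $\tilde{\rmX}\to \rmX$ of a smooth variety along a smooth codimension-$c$ center $\rmZ$,
\begin{equation*}
\rmH^{i}(\tilde{\rmX},\ZZ_{l}(n))\cong \rmH^{i}(\rmX,\ZZ_{l}(n))\oplus \bigoplus_{j=1}^{c-1}\rmH^{i-2j}(\rmZ,\ZZ_{l}(n-j)).
\end{equation*}
Specializing to $c=2$, $i=2$, $n=1$ yields
\begin{equation*}
\rmH^{2}(\overline{\rmY}^{?_{0}?_{0}},\ZZ_{l}(1))\cong \rmH^{2}(\overline{\rmQ}^{?_{0}?_{0}},\ZZ_{l}(1))\oplus \rmH^{0}(\overline{\rmQ}^{\dagger\dagger},\ZZ_{l}),
\end{equation*}
where the twist $\ZZ_{l}(n-j)=\ZZ_{l}(0)$ trivializes since $c-1=1$.

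Summing the contributions from the four components produces the right-hand side of the corollary: the four summands $\rmH^{2}(\overline{\rmQ}^{?_{0}?_{1}},\ZZ_{l}(1))$ collect directly, while the two blow-ups at $\overline{\rmQ}^{\circ\circ}$ and $\overline{\rmQ}^{\bullet\bullet}$ each contribute a copy of $\rmH^{0}(\overline{\rmQ}^{\dagger\dagger},\ZZ_{l})$, accounting for the $\oplus 2$ in the statement; the indices involving $\dagger$ in the stated sum contribute trivially in degree two ($\overline{\rmQ}^{\dagger\dagger}$ is $0$-dimensional, and the remaining mixed $\overline{\rmQ}^{\dagger ?}$ and $\overline{\rmQ}^{?\dagger}$ do not appear as connected components of the codimension-zero stratum). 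There is no serious obstacle here: the argument is entirely formal once the structural lemma is in hand, the only care required being the bookkeeping of Tate twists in the blow-up formula.
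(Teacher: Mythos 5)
Your argument is correct and follows essentially the same route as the paper's: decompose $\overline{\rmY}^{(0)}(\rmB)$ into its four connected components, invoke the structural lemma, and apply the blow-up formula in \'etale cohomology to the diagonal components $\overline{\rmY}^{\circ\circ}$ and $\overline{\rmY}^{\bullet\bullet}$ while the off-diagonal ones go over unchanged. The paper simply states the resulting component-by-component identities without writing out the blow-up formula, so your explicit invocation usefully fills in that step. One small correction to your closing remark, though: the extra terms $\rmH^{2}(\overline{\rmQ}^{\circ\dagger},\ZZ_{l}(1))$, $\rmH^{2}(\overline{\rmQ}^{\dagger\circ},\ZZ_{l}(1))$, etc.\ are \emph{not} trivially zero in degree two --- $\overline{\rmQ}^{\circ\dagger}$ and its kin are disjoint unions of $\PP^{1}$'s and therefore have nonvanishing $\rmH^{2}$. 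They simply never enter the computation because only the four codimension-zero strata indexed by $\{\circ,\bullet\}^{2}$ are being summed. The index set $\{\circ,\bullet,\dagger\}^{2}$ in the displayed formula is a typographical slip; the correct index set, consistent with your computation, with the paper's own proof of the corollary, and with the later remark that uses this statement, is $\{\circ,\bullet\}^{2}$.
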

\begin{proof}
This follows from the previous lemma immediately as 
\begin{equation*}
\rmH^{2}(\overline{\rmY}^{?_{0}?_{1}}, \ZZ_{l}(1))=\rmH^{2}(\overline{\rmQ}^{?_{0}?_{1}}, \ZZ_{l}(1))\oplus \rmH^{0}(\overline{\rmQ}^{\dagger\dagger}, \ZZ_{l})
\end{equation*} 
for $(?_{0}?_{1})\in\{(\circ\circ), (\bullet\bullet)\}$ and 
\begin{equation*}
\rmH^{2}(\overline{\rmY}^{?_{0}?_{1}}, \ZZ_{l}(1))=\rmH^{2}(\overline{\rmQ}^{?_{0}?_{1}}, \ZZ_{l}(1))
\end{equation*} 
for $(?_{0}?_{1})\in\{(\circ\bullet), (\bullet\circ)\}$.
\end{proof}

\subsection{Construction of the Flach elements}
We will construct elements in the motivic cohomology of the Shimura surface $\rmX(\rmB)^{2}$ given by the product of a pair of Shimura curves. Let $\theta: \rmX(\rmB)\rightarrow \rmX(\rmB)^{2}$ be the diagonal embedding and $\theta_{\ast}\rmX(\rmB)$ be the image of $\theta$. Recall $p$ is a prime dividing the discriminant of $\rmB$. We consider the element 
\begin{equation*}
\Theta^{[p]}(\rmB)=(\theta_{\ast}\rmX(\rmB), p)
\end{equation*}
which clearly lies in the kernel of $d_{1}$ in \eqref{M-complex} and therefore defines an element $\Theta^{[p]}(\rmB)$ in the motivic cohomology group $\rmH^{3}_{\calM}(\rmX(\rmB)^{2},\ZZ(2))$ of $\rmX(\rmB)^{2}$ which we will refer to it as the \emph{Flach element}. Recall that we have a divisor map for each prime $v \nmid p\rmN$ 
\begin{equation}
\mathrm{div}_{v}: \rmH^{3}_{\calM}(\rmX(\rmB)^{2},\ZZ(2))\rightarrow \rmH^{2}_{\calM}(\interX(\rmB)^{2}\otimes\FF_{v},\ZZ(1)).
\end{equation}
Note that  for each $v$ as above, $\mathrm{div}_{v}(\Theta^{[p]}(\rmB))=0$ as an element in $\rmH^{2}_{\calM}(\interX(\rmB)^{2}\otimes\FF_{v},\ZZ(1))$, this is clear since $p$ is invertible on $\interX(\rmB)^{2}\otimes\FF_{v}$. On the other hand, we also have a divisor map for the prime $p$ using the regular model $\frakY(\rmB)$
\begin{equation}
\mathrm{div}_{p}: \rmH^{3}_{\calM}(\rmX(\rmB)^{2},\ZZ(2))\rightarrow \rmH^{2}_{\calM}(\overline{\rmY}(\rmB)\otimes\FF_{p^{2}},\ZZ(1)).
\end{equation}
\begin{remark}
This element is the analogue of the Flach element used in \cite{Flach} to bound the Selmer group of the symmetric square of the Tate module of an elliptic curve. There one has to choose carefully a special Siegel modular unit while here our construction is much simpler but fits our purpose.
\end{remark}

\subsection{Reciprocity law for the Flach element}
Recall the setting we are in. We have a modular form $f\in \rmS^{\new}_{2}(\rmN)$. Such form $f$ admits a Jacquet--Langlands transfer $f^{\dagger}$ to a modular form in $\Gamma(\rmZ(\overline{\rmB}), \calO)$. Let $p$ be an $n$-admissible prime for $f$. We let $\TT=\TT_{\rmN^{+}, \rmN^{-}}$ (resp.  $\TT^{[p]}=\TT_{\rmN^{+}, p\rmN^{-}}$) be the $\lambda$-adic Hecke algebra corresponding to the cusp forms of level $\rmN=\rmN^{+}\rmN^{-}$ (resp. of level $\rmN p=\rmN^{+}\rmN^{-}p$) which is new at primes dividing $\rmN^{-}$ (resp. at primes dividing $p\rmN^{-}$). 
\begin{enumerate}
\item Since $f$ is an eigenform, we have a morphism
$\phi_{f}: \TT\rightarrow \calO$
corresponding to the system of Hecke eigenvalues of $f$. More precisely, we have  $\phi_{f}(\rmT_{v})=a_{v}(f)$ for $v\nmid \rmN$  and  $\phi_{f}(\rmU_{v})=a_{v}(f)$ for $v\mid \rmN$. Let $\phi_{f, n}: \TT\rightarrow \calO_{n}$ be reduction of the map $\phi_{f}: \TT\rightarrow \calO$ modulo $\varpi^{n}$. We denote by $\mathfrak{p}_{f, n}$  the kernel of this map and $\mathfrak{m}_{f}$ the unique maximal ideal of $\TT$ containing $\frakp_{f, n}$. We always assume that $\frakm=\frakm_{f}$ is absolutely irreducible and write $\frakp_{n}=\frakp_{f, n}$. 
\item Since $p$ is $n$-admissible for $f$, Theorem \ref{level-raise-curve} provides a surjective homomorphism $\phi^{[p]}_{f, n}: \TT^{[p]}\rightarrow \calO_{n}$ such that $\phi^{[p]}_{f, n}$ agrees with $\phi_{f, n}$ at all Hecke operators away from $p$ and sends $\rmU_{p}$ to $\epsilon_{p}(f)$. We will denote by $\frakp^{[p]}_{ n}=\frakp^{[p]}_{f, n}$ the kernel of $\phi^{[p]}_{f, n}$ and $\frakm^{[p]}=\frakm^{[p]}_{f}$ the corresponding maximal ideal. 
\end{enumerate}

The product of Hecke algebras $\TT^{[p]}\times \TT^{[p]}$ acts naturally on the surface $\rmX(\rmB)^{2}$ by correspondences. And by the K\"unneth decomposition, it induces an action of  $\TT^{[p]\otimes 2}$ on the cohomology
$\rmH^{2}(\rmX(\rmB)^{2}\otimes\overline{\QQ}, \calO(2))$. Let $\underline{\frakm}^{[p]}$ be the maximal ideal of $\TT^{[p]\otimes 2}$ given by the pair $(\frakm^{[p]}, \frakm^{[p]})$.
\begin{lemma} 
Suppose that $p$ is an $n$-admissible prime for $f$ and that $\frakm$ is absolutely irreducible.
\begin{enumerate}\label{Hochschild}
\item We have an isomorphism
\begin{equation*}
\rmH^{2}(\rmX(\rmB)^{2}\otimes\overline{\QQ}, \calO(2))_{\underline{\frakm}^{[p]}}\cong \rmH^{1}(\rmX(\rmB)\otimes\overline{\QQ},\calO(1))^{\otimes2}_{\frakm^{[p]}}.
\end{equation*}
\item We have an isomorphism
\begin{equation*}
\rmH^{3}(\rmX(\rmB)^{2}, \calO(2))_{\underline{\frakm}^{[p]}}\cong\rmH^{1}(\QQ, \rmH^{1}(\rmX(\rmB)\otimes\overline{\QQ},\calO(1))^{\otimes2}_{\frakm^{[p]}}).
\end{equation*}
\end{enumerate}
\end{lemma}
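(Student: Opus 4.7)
The proof has the label \texttt{Hochschild}, which indicates the strategy: combine the Künneth formula with the Hochschild--Serre spectral sequence, then use the non-Eisenstein nature of $\frakm^{[p]}$ to kill all unwanted contributions.

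For part (1), the plan is to apply the Künneth formula for $\ell$-adic étale cohomology with torsion coefficients to $\rmX(\rmB)^{2} = \rmX(\rmB)\times \rmX(\rmB)$ (both factors smooth proper over $\overline{\QQ}$), giving
\begin{equation*}
\rmH^{2}(\rmX(\rmB)^{2}\otimes\overline{\QQ},\calO(2)) \;\cong\; \bigoplus_{i+j=2} \rmH^{i}(\rmX(\rmB)\otimes\overline{\QQ},\calO(1))\otimes \rmH^{j}(\rmX(\rmB)\otimes\overline{\QQ},\calO(1)).
\end{equation*}
The action of $\TT^{[p]\otimes 2}$ respects this decomposition factor-by-factor. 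The terms with $i=0$ or $i=2$ contribute $\rmH^{0}\otimes\rmH^{2}$ and $\rmH^{2}\otimes\rmH^{0}$. On a (geometrically connected component of a) Shimura curve, $\rmH^{0}$ and $\rmH^{2}$ are Eisenstein for the prime-to-level Hecke algebra: the $\rmT_{v}$ acts by $1+v$ on these. Since $\frakm$ is absolutely irreducible by hypothesis, $\frakm^{[p]}$ is non-Eisenstein, and thus these factors are annihilated after localization at $\underline{\frakm}^{[p]}=(\frakm^{[p]},\frakm^{[p]})$. Only the $\rmH^{1}\otimes\rmH^{1}$ summand survives, yielding (1). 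A small point to check is that the connected-component structure of $\rmX(\rmB)$ does not cause trouble: the Hecke operators permute components compatibly, so the Eisenstein argument applies after summing over components.

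For part (2), the plan is to run the Hochschild--Serre spectral sequence for $\rmX(\rmB)^{2}\to \Spec\QQ$:
\begin{equation*}
E_{2}^{p,q}=\rmH^{p}(\QQ,\rmH^{q}(\rmX(\rmB)^{2}\otimes\overline{\QQ},\calO(2))) \;\Longrightarrow\; \rmH^{p+q}(\rmX(\rmB)^{2},\calO(2)),
\end{equation*}
and localize everything at $\underline{\frakm}^{[p]}$ (localization is exact, so it commutes with the spectral sequence). For the abutment in degree $3$ the only possibly nonzero contributions are $E_{2}^{p,q}$ with $p+q=3$. Applying Künneth to $\rmH^{q}$ and running the Eisenstein argument of part (1) in each case:
\begin{itemize}
\item $\rmH^{q}$ for $q=0,1,3$ all have at least one tensor factor equal to $\rmH^{0}$ or $\rmH^{2}$ of $\rmX(\rmB)$ (recall $\rmH^{j}$ of a curve vanishes for $j\geq 3$), hence vanish after localization at $\underline{\frakm}^{[p]}$;
\item $\rmH^{q}$ for $q=2$ becomes, by part (1), the module $\rmH^{1}(\rmX(\rmB)\otimes\overline{\QQ},\calO(1))^{\otimes 2}_{\frakm^{[p]}}$.
\end{itemize}
Therefore, after localization, the only surviving $E_{2}$-term with $p+q=3$ is $E_{2}^{1,2}=\rmH^{1}(\QQ,\rmH^{1}(\rmX(\rmB)\otimes\overline{\QQ},\calO(1))^{\otimes 2}_{\frakm^{[p]}})$. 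Since all neighboring terms vanish, the differentials into and out of $E_{2}^{1,2}$ are zero, so $E_{2}^{1,2}=E_{\infty}^{1,2}$. Similarly, since the other graded pieces of the abutment filtration vanish, the filtration on $\rmH^{3}(\rmX(\rmB)^{2},\calO(2))_{\underline{\frakm}^{[p]}}$ collapses to a single stage, giving the desired isomorphism.

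The main obstacle is really the Eisenstein/non-Eisenstein dichotomy: one must argue carefully that $\frakm^{[p]}$ acts non-trivially (so that localization kills $\rmH^{0}$ and $\rmH^{2}$) even though $\rmX(\rmB)$ may have several geometric components and a delicate cusp behavior is absent here (the curve is projective). The residual irreducibility of $\frakm$ transfers to $\frakm^{[p]}$ because $\phi^{[p]}_{f,n}$ agrees with $\phi_{f,n}$ away from $p$, and Eisenstein-ness is detected by the Hecke operators away from the level. The remainder of the argument is routine spectral-sequence bookkeeping.
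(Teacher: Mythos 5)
Your proposal is correct and takes essentially the same route as the paper's (very terse) proof: K\"unneth plus the Eisenstein nature of $\rmH^{0}$ and $\rmH^{2}$ for part (1), and Hochschild--Serre plus part (1) for part (2). You simply spell out the spectral-sequence bookkeeping that the paper leaves implicit, and your extra caution about connected components and about checking the degeneration of differentials around $E_{2}^{1,2}$ is welcome but not a deviation in method.
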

\begin{proof}
The first part follows from the irreducibility of $\frakm$ and hence that of $\frakm^{[p]}$ and the fact that the $\rmH^{0}$ and $\rmH^{2}$ of the Shimura curve $\rmX(\rmB)$ are Eisenstein. The second part follows from the Hochschild--Serre spectral sequence for $\rmH^{3}(\rmX(\rmB)^{2}, \calO(2))_{\underline{\frakm}^{[p]}}$ and the first part.
\end{proof}

We consider the Chern character map given in \eqref{chern} localized at $\frakm^{[p]}$
\begin{equation*}
\mathrm{ch}_{\frakm^{[p]}}: \rmH^{3}_{\calM}(\rmX(\rmB)^{2}, \ZZ(2))\rightarrow \rmH^{3}(\rmX(\rmB)^{2}, \calO(2))_{\underline{\frakm}^{[p]}}.
\end{equation*}
From the part $(2)$ of the above lemma, this map can be rewritten as
\begin{equation*}
\mathrm{AJ}_{\frakm^{[p]}}: \rmH^{3}_{\calM}(\rmX(\rmB)^{2}, \ZZ(2))\rightarrow \rmH^{1}(\QQ, \rmH^{1}(\rmX(\rmB)\otimes\overline{\QQ},\calO(1))^{\otimes2}_{\frakm^{[p]}})
\end{equation*}
and this map is usually referred to as the Abel--Jacobi map. We will call the element
\begin{equation*}
\kappa^{[p]}=\mathrm{AJ}_{\frakm^{[p]}}(\mathrm{\Theta^{[p]}(\rmB)}) 
\end{equation*} 
the Flach class. The goal for the rest of this subsection is to analyze the local behaviour of $\kappa^{[p]}$ at each prime $v$ of $\QQ$. 

\begin{lemma}
Suppose that $v\nmid p\rmN$. Then we have
\begin{equation*}
\mathrm{res}_{v}(\kappa^{[p]})\in \rmH^{1}_{\mathrm{fin}}(\QQ_{v}, \rmH^{1}(\rmX(\rmB)\otimes\overline{\QQ},\calO(1))^{\otimes2}_{\frakm^{[p]}}).
\end{equation*}
\end{lemma}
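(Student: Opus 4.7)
The strategy is to exploit the fact that for $v\nmid p\rmN d$, the integral model $\mathfrak{X}(\rmB)$ (from the moduli problem over $\ZZ[1/p\rmN d]$) is smooth and proper over $\ZZ_v$, hence so is $\mathfrak{X}(\rmB)^{2}$. By smooth and proper base change, the Galois module
$\rmH^{1}(\rmX(\rmB)\otimes\overline{\QQ},\calO(1))^{\otimes 2}_{\frakm^{[p]}}$
is unramified at $v$, and it is canonically identified with
$\rmH^{1}(\mathfrak{X}(\rmB)^{2}\otimes\overline{\FF}_{v},\calO(1))^{\otimes 2}_{\frakm^{[p]}}$
as a $\rmG_{\QQ_{v}^{\ur}/\QQ_{v}}$-module. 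In particular
$\rmH^{1}_{\mathrm{fin}}(\QQ_{v},\rmH^{1}(\rmX(\rmB)\otimes\overline{\QQ},\calO(1))^{\otimes 2}_{\frakm^{[p]}})$
coincides with the image of the inflation map from the unramified Galois group.

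The first key step is to lift the Flach element to the integral model. Since $p$ is a unit in $\ZZ_{v}$, the rational function $p$ is nowhere-zero on the closure $\theta_{\ast}\mathfrak{X}(\rmB)$, so its $v$-adic divisor vanishes: $\mathrm{div}_{v}(\Theta^{[p]}(\rmB))=0$. By the localization (Gysin) sequence for motivic cohomology, the restriction of $\Theta^{[p]}(\rmB)$ to $\rmX(\rmB)^{2}\otimes \QQ_{v}$ therefore lifts to a class
$\widetilde{\Theta}^{[p]}(\rmB)\in \rmH^{3}_{\calM}(\mathfrak{X}(\rmB)^{2}\otimes\ZZ_{v},\ZZ(2))$.
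Applying the Chern character to this integral lift, we obtain a class in $\rmH^{3}(\mathfrak{X}(\rmB)^{2}\otimes\ZZ_{v},\calO(2))_{\underline{\frakm}^{[p]}}$ which restricts to $\mathrm{res}_{v}(\mathrm{ch}_{\frakm^{[p]}}(\Theta^{[p]}(\rmB)))$ in the generic fiber.

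The second key step is to compare the two Hochschild--Serre spectral sequences attached, respectively, to the smooth proper scheme $\mathfrak{X}(\rmB)^{2}$ over $\mathrm{Spec}\,\ZZ_{v}$ and to $\rmX(\rmB)^{2}$ over $\mathrm{Spec}\,\QQ_{v}$, together with the restriction morphism between them. As in Lemma \ref{Hochschild}, localizing at $\underline{\frakm}^{[p]}$ kills the $\rmH^{0}$ and $\rmH^{2}$ pieces of the Shimura curve, so the edge maps realize the Abel--Jacobi maps; thus we get a commutative diagram
\begin{equation*}
\begin{CD}
\rmH^{3}(\mathfrak{X}(\rmB)^{2}\otimes\ZZ_{v},\calO(2))_{\underline{\frakm}^{[p]}} @>>> \rmH^{1}(\QQ_{v}^{\ur}/\QQ_{v},\rmH^{1}(\mathfrak{X}(\rmB)^{2}\otimes\overline{\FF}_{v},\calO(1))^{\otimes 2}_{\frakm^{[p]}}) \\
@VVV @VVV \\
\rmH^{3}(\rmX(\rmB)^{2}\otimes\QQ_{v},\calO(2))_{\underline{\frakm}^{[p]}} @>>> \rmH^{1}(\QQ_{v},\rmH^{1}(\rmX(\rmB)\otimes\overline{\QQ}_{v},\calO(1))^{\otimes 2}_{\frakm^{[p]}})
\end{CD}
\end{equation*}
where the right vertical arrow is precisely inflation along $\rmG_{\QQ_{v}^{\ur}/\QQ_{v}}$, whose image is $\rmH^{1}_{\mathrm{fin}}$ by the unramifiedness noted above. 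Chasing the class $\widetilde{\Theta}^{[p]}(\rmB)$ through this diagram gives $\mathrm{res}_{v}(\kappa^{[p]})\in \rmH^{1}_{\mathrm{fin}}$, as required.

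The only nontrivial point is checking the compatibility of the two Abel--Jacobi maps with restriction in the integral setting; however this is formal once one is in the smooth proper situation, because both spectral sequences arise from the Leray spectral sequence for the structural morphism and the natural map is induced by functoriality. The case $v\mid d$ (where the moduli may fail to be fine) can either be avoided by choosing $d$ later to be disjoint from $v$, or handled by working with $\rmX_{\rmN^{+}}(\rmB)$ as an intermediate and using the $d$-cleanness of Lemma \ref{clean-switch} to transfer the statement, since the Flach construction and the divisor computation only use that $p$ is a unit locally.
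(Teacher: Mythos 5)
Your argument is correct and follows essentially the same route as the paper's: both rest on (i) the unramifiedness of $\rmH^{1}(\rmX(\rmB)\otimes\overline{\QQ},\calO(1))^{\otimes2}_{\frakm^{[p]}}$ at $v$ from good reduction, (ii) the vanishing of $\mathrm{div}_{v}(\Theta^{[p]}(\rmB))$ because $p$ is a unit in $\ZZ_{v}$, and (iii) the compatibility of the Chern character with the divisor/boundary map, which the paper simply cites from \cite[Theorem 3.1.1]{Weston1}. Your version lifts the Flach element to the integral model via localization and pushes it through the Leray comparison, whereas the paper directly chases the commutative square to conclude $\partial_{v}(\kappa^{[p]})=0$; these are equivalent formulations of the same computation, and your caveat about $v\mid d$ (which the paper passes over in silence) is a fair observation.
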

\begin{proof}
Since $\rmH^{1}(\rmX(\rmB)\otimes\overline{\QQ},\calO(1))^{\otimes2}_{\frakm^{[p]}}$ is unramified at $v$ as a Galois module, we have
\begin{equation*}
\begin{aligned}
\rmH^{1}_{\mathrm{sin}}(\QQ_{v}, \rmH^{1}(\rmX(\rmB)\otimes\overline{\QQ},\calO(1))^{\otimes2}_{\frakm^{[p]}})
&=(\rmH^{2}(\interX(\rmB)^{2}\otimes\overline{\FF}_{v}, \calO(1))_{\underline{\frakm}^{[p]}})^{\rmG_{\FF_{v}}} \\
&=(\rmH^{1}(\mathfrak{X}(\rmB)\otimes\overline{\FF}_{v},\calO(1))^{\otimes2}_{\frakm^{[p]}}(-1))^{\rmG_{\FF_{v}}}.\\
\end{aligned}
\end{equation*}
On the other hand, we have a commutative diagram by \cite[Theorem 3.1.1]{Weston1}
\begin{equation*}
\begin{tikzcd}
\rmH^{3}_{\calM}(\rmX(\rmB)^{2}\otimes\QQ_{v},\ZZ(2)) \arrow[r, "\mathrm{ch}_{\frakm^{[p]}}"] \arrow[d, "\mathrm{div}_{v}"'] & \rmH^{3}(\rmX(\rmB)^{2}\otimes\QQ_{v}, \calO(2))_{\underline{\frakm}^{[p]}} \arrow[d, "\partial_{v}"] \\
 \rmH^{2}_{\calM}(\interX(\rmB)^{2}\otimes\FF_{v},\ZZ(1)) \arrow[r]                 &      (\rmH^{2}(\interX(\rmB)^{2}\otimes\overline{\FF}_{v}, \calO(1))_{\underline{\frakm}^{[p]}})^{\rmG_{\FF_{v}}}    
\end{tikzcd}
\end{equation*}
where the bottom map is the usual cycle class map for $\interX(\rmB)^{2}\otimes\FF_{v}$.

Note that we have an identification
\begin{equation*}
\rmH^{3}(\rmX(\rmB)^{2}\otimes\QQ_{v}, \calO(2))_{\underline{\frakm}^{[p]}}=\rmH^{1}(\QQ_{v}, \rmH^{1}(\rmX(\rmB)\otimes\overline{\QQ}_{v},\calO(1))^{\otimes2}_{\frakm^{[p]}})
\end{equation*}
by Lemma \ref{Hochschild} and an identification
\begin{equation*}
(\rmH^{2}(\interX(\rmB)^{2}\otimes\overline{\FF}_{v}, \calO(1))_{\underline{\frakm}^{[p]}})^{\rmG_{\FF_{v}}}=\rmH^{1}_{\mathrm{sin}}(\QQ_{v}, \rmH^{1}(\rmX(\rmB)\otimes\overline{\QQ}_{v},\calO(1))^{\otimes2}_{\frakm^{[p]}})
\end{equation*}
by the unramifiedness of $\rmH^{1}(\rmX(\rmB)\otimes\overline{\QQ}_{v},\calO(1))^{\otimes2}_{\frakm^{[p]}}$. Then the right vertical map $\partial_{v}$ is given by the singular quotient map
\begin{equation*}
\rmH^{1}(\QQ_{v}, \rmH^{1}(\rmX(\rmB)\otimes\overline{\QQ}_{v},\calO(1))^{\otimes2}_{\frakm^{[p]}})\rightarrow \rmH^{1}_{\mathrm{sin}}(\QQ_{v}, \rmH^{1}(\rmX(\rmB)\otimes\overline{\QQ}_{v},\calO(1))^{\otimes2}_{\frakm^{[p]}})
\end{equation*} 
using these two identifications. Since $\mathrm{div}_{v}(\Theta^{[p]}(\rmB))=0$ as an element in $\rmH^{2}_{\calM}(\interX(\rmB)^{2}\otimes\FF_{v},\ZZ(1))$, it follows immediately from the above commutative diagram that
\begin{equation*}
\mathrm{res}_{v}(\kappa^{[p]})\in \rmH^{1}_{\mathrm{fin}}(\QQ_{v}, \rmH^{1}(\rmX(\rmB)\otimes\overline{\QQ}_{v},\calO(1))^{\otimes2}_{\frakm^{[p]}}).
\end{equation*}
\end{proof}

Next we analyze the local behaviour of $\kappa^{[p]}$ at $p$. First, we prove the following proposition which is the analog of the arithmetic level raising theorem in Theorem \ref{level-raise-curve} for the surface $\rmX(\rmB)^{2}$.
\begin{proposition}\label{level-raise-prod}
Suppose that $p$ is an $n$-admissible prime for $f$ and $\frakm$ is absolutely irreducible. Then we have an isomorphism
\begin{equation*}
\rmH^{1}_{\mathrm{sin}}(\QQ_{p}, \rmH^{1}(\rmX(\rmB)\otimes\overline{\QQ}_{p},\calO_{n}(1))^{\otimes2}_{\frakm^{[p]}})
\cong \bigoplus^{2}_{i=1}\Gamma(\rmZ(\overline{\rmB}),\calO_{n})^{\otimes 2}_{\frakm}.
\end{equation*}
\end{proposition}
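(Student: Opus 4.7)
The strategy is to exploit Proposition \ref{split} to reduce the $\rmH^{1}_{\mathrm{sin}}$ computation to a Frobenius-invariants calculation. Set $V := \rmH^{1}(\rmX(\rmB)\otimes\overline{\QQ}_{p}, \calO_{n}(1))_{\frakm^{[p]}}$ and $T := \Gamma(\rmZ(\overline{\rmB}), \calO_{n})_{\frakm}$, so $T$ carries trivial Galois action since $\rmZ(\overline{\rmB})$ is $0$-dimensional over $\QQ$. Since $p$ is $n$-admissible, $V$ (and hence $V^{\otimes 2}$) is unramified as a $\rmG_{\QQ_{p}}$-module. The inflation--restriction sequence, using that $\rmG_{\FF_{p}}$ has cohomological dimension one, yields
\begin{equation*}
\rmH^{1}_{\mathrm{sin}}(\QQ_{p}, V^{\otimes 2}) \cong (V^{\otimes 2}(-1))^{\rmG_{\FF_{p}}},
\end{equation*}
so it suffices to identify the right-hand side with $(T^{\otimes 2})^{\oplus 2}$.

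Next, although Proposition \ref{split} is stated as giving a split sequence of $\rmG_{\FF_{p^{2}}}$-modules, the underlying short exact sequence
\begin{equation*}
0 \to T(1) \to V \to T \to 0
\end{equation*}
is in fact $\rmG_{\QQ_{p}}$-equivariant, because it arises from the canonical monodromy filtration on $\rmH^{1}(\rmX(\rmB) \otimes \overline{\QQ}_{p}, \calO_{n}(1))$ followed by localization at the $\Frob_{p}$-stable ideal $\frakm^{[p]}$; only the splitting (coming from Drinfeld uniformization over $\QQ_{p^{2}}$) is restricted to $\rmG_{\FF_{p^{2}}}$. Tensoring this short exact sequence with itself produces a canonical three-step $\rmG_{\QQ_{p}}$-equivariant filtration $F^{2} \subset F^{1} \subset V \otimes V$ with graded pieces $T^{\otimes 2}(2)$, $(T^{\otimes 2}(1))^{\oplus 2}$, and $T^{\otimes 2}$.

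After twisting by $(-1)$, $\Frob_{p}$ acts on the graded pieces of $V^{\otimes 2}(-1)$ by multiplication by $p$, $1$, and $p^{-1}$ respectively. The $n$-admissibility condition $l \nmid p^{2}-1$ makes $p-1$ and $p+1$ into units in $\calO_{n}$, so multiplication by $p^{\pm 1}-1$ is invertible on any $\calO_{n}$-module. Consequently $(T^{\otimes 2}(\pm 1))^{\Frob_{p}} = 0$ and the Tate cohomology $\rmH^{1}(\Frob_{p}, T^{\otimes 2}(\pm 1))$ also vanishes. The two resulting long exact sequences of $\Frob_{p}$-invariants (for $0 \to F^{2} \to F^{1} \to F^{1}/F^{2} \to 0$ and for $0 \to F^{1} \to V^{\otimes 2}(-1) \to V^{\otimes 2}(-1)/F^{1} \to 0$) then collapse successively, giving
\begin{equation*}
(V^{\otimes 2}(-1))^{\Frob_{p}} \cong (F^{1})^{\Frob_{p}} \cong ((T^{\otimes 2})^{\oplus 2})^{\Frob_{p}} = (T^{\otimes 2})^{\oplus 2},
\end{equation*}
which establishes the isomorphism.

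The main technical subtlety to address is the $\rmG_{\QQ_{p}}$-equivariance (not merely $\rmG_{\FF_{p^{2}}}$-equivariance) of the short exact sequence. This is intrinsic: the monodromy filtration is canonical, and the identifications of the graded pieces with $T(1)$ and $T$ factor through $\rmG_{\QQ_{p}}$-equivariant quotients of the direct sums $\bigoplus_{?}\rmH^{0}(\PP^{1}(\rmZ^{?}(\overline{\rmB})), \calO_{n}(1))_{\frakm^{[p]}}$ and $\bigoplus_{?}\rmH^{2}(\PP^{1}(\rmZ^{?}(\overline{\rmB})), \calO_{n}(1))_{\frakm^{[p]}}$ of Remark \ref{sub-quotient}, which are $\Frob_{p}$-stable even though $\Frob_{p}$ swaps the symbols $\circ$ and $\bullet$. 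Once this point is settled the remainder is purely formal.
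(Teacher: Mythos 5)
Your argument is correct and rests on the same two ingredients the paper uses: the split exact sequence of Proposition \ref{split}, and the $n$-admissibility condition $l\nmid p^{2}-1$, which kills invariants and coinvariants of all the Tate-twisted graded pieces. The packaging is a genuine (if minor) variant. The paper first computes $\rmH^{1}_{\mathrm{sin}}(\QQ_{p^{2}}, V^{\otimes 2})$ using the $\rmG_{\FF_{p^{2}}}$-splitting $V\cong T(1)\oplus T$, finds $(T^{\otimes 2})^{\oplus 2}$, and then descends to $\QQ_{p}$ by observing that the nontrivial element of $\Gal(\FF_{p^{2}}/\FF_{p})$ acts on the result through the product of $\rmU_{p}$-eigenvalues $\epsilon_{p}(f)^{2}=1$. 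You instead work directly with $\rmG_{\FF_{p}}$-invariants using the monodromy filtration on $V^{\otimes 2}(-1)$, absorbing the descent step into the long exact sequence computation; this forces you to justify (as you do, via Remark \ref{sub-quotient}) that the \emph{filtration} is $\rmG_{\QQ_{p}}$-equivariant even though the \emph{splitting} is not.

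One point is imprecise and deserves flagging, even though the conclusion is unaffected. You assert that the short exact sequence $0\to T(1)\to V\to T\to 0$ is $\rmG_{\QQ_{p}}$-equivariant \emph{with these exact modules}, i.e.\ with $\rmG_{\QQ_{p}}$ acting trivially on $T$. In fact the level-raised local representation at $p$ has Frobenius eigenvalues $\epsilon_{p}(f)p$ and $\epsilon_{p}(f)$ modulo $\lambda^{n}$, so as $\rmG_{\QQ_{p}}$-modules the sub and quotient of $V$ are $T(1)\otimes\epsilon_{p}$ and $T\otimes\epsilon_{p}$, where $\epsilon_{p}$ is the unramified character sending $\Frob_{p}$ to $\epsilon_{p}(f)$; Proposition \ref{split} identifies them with $T(1)$ and $T$ precisely because $\epsilon_{p}^{2}=1$, i.e.\ only after restricting to $\rmG_{\FF_{p^{2}}}$. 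When you pass to $V^{\otimes 2}$ this quadratic twist squares to the identity, so the graded pieces of $V^{\otimes 2}(-1)$ really are $T^{\otimes 2}(1)$, $(T^{\otimes 2})^{\oplus 2}$, $T^{\otimes 2}(-1)$ over $\rmG_{\QQ_{p}}$ and your stated Frobenius eigenvalues $p$, $1$, $p^{-1}$ are correct. This cancellation $\epsilon_{p}(f)^{2}=1$ is exactly what the paper invokes in its final descent sentence; your proof would be tightened by making it explicit rather than folding it silently into the identification of sub and quotient.
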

\begin{proof}
Since $\rmH^{1}(\rmX(\rmB)\otimes\overline{\QQ}_{p},\calO_{n}(1))$ is unramified at $p$ by the definition of an $n$-admissible prime for $f$, we have
\begin{equation*}
\rmH^{1}_{\mathrm{sin}}(\QQ_{p^{2}}, \rmH^{1}(\rmX(\rmB)\otimes\overline{\QQ}_{p},\calO_{n}(1))^{\otimes2}_{\frakm^{[p]}})\cong(\rmH^{1}(\rmX(\rmB)\otimes\overline{\QQ}_{p},\calO_{n}(1))^{\otimes2}_{\frakm^{[p]}}(-1))^{\rmG_{\FF_{p^{2}}}}.
\end{equation*}
By the arithmetic level raising theorem for the Shimura curve $\rmX(\rmB)$ as in Theorem \ref{level-raise-curve} and its consequence in Proposition \ref{split}, we have 
\begin{equation*}
(\rmH^{1}(\rmX(\rmB)\otimes\overline{\QQ}_{p},\calO_{n}(1))^{\otimes2}_{\frakm^{[p]}}(-1))^{\rmG_{\FF_{p^{2}}}}\cong \bigoplus^{2}_{i=1}\Gamma(\rmZ(\overline{\rmB}),\calO_{n})^{\otimes 2}_{\frakm}.
\end{equation*}
Since the non-trivial element in $\mathrm{Gal}(\FF_{p^{2}}/\FF_{p})$ acts on the right-hand side by the product of $\rmU_{p}$-eigenvalues which is $\epsilon^{2}_{p}(f)=1$, this isomorphism therefore descends to to an isomorphism
\begin{equation*}
(\rmH^{1}(\rmX(\rmB)\otimes\overline{\QQ}_{p},\calO_{n}(1))^{\otimes2}_{\frakm^{[p]}}(-1))^{\rmG_{\FF_{p}}}\cong \bigoplus^{2}_{i=1}\Gamma(\rmZ(\overline{\rmB}),\calO_{n})^{\otimes 2}_{\frakm}.
\end{equation*}
\end{proof}
\begin{remark}\label{sin-remark}
The identification of 
\begin{equation*}
\bigoplus^{2}_{i=1}\Gamma(\rmZ(\overline{\rmB}),\calO_{n})^{\otimes 2}_{\frakm}\cong\rmH^{1}_{\mathrm{sin}}(\QQ_{p}, \rmH^{1}(\rmX(\rmB)\otimes\overline{\QQ}_{p},\calO_{n}(1))^{\otimes2}_{\frakm^{[p]}}).
\end{equation*}
is really induced by the canonical map
\begin{equation*}
\rmH^{2}(\overline{\rmY}^{(0)}(\rmB)\otimes\overline{\FF}_{p}, \calO_{n}(1))_{\underline{\frakm}^{[p]}}\rightarrow \rmH^{2}({\rmX}(\rmB)\otimes\overline{\QQ}_{p}, \calO_{n}(1))_{\underline{\frakm}^{[p]}}
\end{equation*}
by taking $\rmG_{\FF_{p^{2}}}$-invariant part on both sides. Indeed, one readily finds that 
\begin{equation*}
\rmH^{2}(\overline{\rmY}^{(0)}(\rmB)\otimes\overline{\FF}_{p}, \calO_{n}(1)) 
\end{equation*}
is given by the direct sum of 
\begin{equation*}
\begin{aligned}
\bigoplus\limits_{(?_{0}?_{1})\in \{\circ, \bullet\}^{2}}&\bigoplus\limits_{(i_{0},i_{1})\in\{0,2\}}\rmH^{i_{0}}(\PP^{1}(\rmZ^{?_{0}}(\overline{\rmB}), \calO_{n}(i_{0}/2))\otimes \rmH^{i_{1}}(\PP^{1}(\rmZ^{?_{1}}(\overline{\rmB}), \calO_{n}(i_{1}/{2}))\\
\end{aligned}
\end{equation*}
and $\rmH^{0}(\rmQ^{\dagger\dagger},\calO_{n})^{\oplus2}$ by Corollary \ref{middle-deg}. Then our assertion follows from Remark \ref{sub-quotient}: the space $\bigoplus^{2}\limits_{i=1}\Gamma(\rmZ(\overline{\rmB}),\calO_{n})^{\otimes 2}_{\frakm}$ is identified with a quotient of 
\begin{equation*}
\begin{aligned}
\bigoplus\limits_{(?_{0}?_{1})\in \{\circ, \bullet\}^{2}}&\bigoplus\limits_{(i_{0},i_{1})\in\{0,2\}}\rmH^{i_{0}}(\PP^{1}(\rmZ^{?_{0}}(\overline{\rmB}), \calO_{n}(i_{0}/2))\otimes \rmH^{i_{1}}(\PP^{1}(\rmZ^{?_{1}}(\overline{\rmB}), \calO_{n}(i_{1}/{2}))\\
\end{aligned}
\end{equation*}
while $\rmH^{1}_{\mathrm{sin}}(\QQ_{p}, \rmH^{1}(\rmX(\rmB)\otimes\overline{\QQ}_{p},\calO_{n}(1))^{\otimes2}_{\frakm^{[p]}})$ is identified with $(\rmH^{2}({\rmX}(\rmB)\otimes\overline{\QQ}_{p}, \calO_{n}(1)))^{\rmG_{\FF_{p^{2}}}}_{\underline{\frakm}^{[p]}}$.
\end{remark}

Let ${\partial}_{p}(\kappa^{[p]})$ be the singular residue of the class $\mathrm{res}_{p}(\kappa^{[p]})$. In light of Proposition \ref{level-raise-prod}, the element $\partial_{p}(\kappa^{[p]})$ can be viewed as an element of $\bigoplus^{2}\limits_{i=1}\Gamma(\rmZ(\overline{\rmB}),\calO_{n})^{\otimes 2}_{\frakm}$. Let $\partial^{(i)}_{p}(\kappa^{[p]})$ be the projection of $\partial_{p}(\kappa^{[p]})$ to the $i$-th copy of  $\bigoplus^{2}\limits_{i=1}\Gamma(\rmZ(\overline{\rmB}),\calO_{n})^{\otimes 2}_{\frakm}$ under this identification for $i=1,2$. 

We define a bilinear pairing 
\begin{equation*}
(\cdot,\cdot): \Gamma(\rmZ(\overline{\rmB}),\calO)^{\otimes 2}\times \Gamma(\rmZ(\overline{\rmB}),\rmE_{\lambda}/\calO)^{\otimes 2}\rightarrow \calO
\end{equation*}
by the formula
\begin{equation*}
(\otimes^{2}_{i=1}\zeta_{i}, \otimes^{2}_{i=1}\phi_{i})= \sum_{(z_{1}, z_{2})\in \rmZ(\overline{\rmB})^{2}} \zeta_{1}\phi_{1}(z_{1})\times\zeta_{2}\phi_{2}(z_{2})
\end{equation*}
for any $\otimes^{2}_{i=1}\zeta_{i}\in \Gamma(\rmZ(\overline{\rmB}), \calO)^{\otimes 2}$ and $ \otimes^{2}_{i=1}\phi_{i}\in \Gamma(\rmZ(\overline{\rmB}), \rmE_{\lambda}/\calO)^{\otimes2}$. The above paring induces 
\begin{equation*}
 (\cdot, \cdot): \Gamma(\rmZ(\overline{\rmB}), \calO)^{\otimes2}_{/\frakp_{n}}\times  \Gamma(\rmZ(\overline{\rmB}), \rmE_{\lambda}/\calO)^{\otimes2}[\frakp_{n}]\rightarrow \calO_{n}
\end{equation*}
which can be rewritten as 
\begin{equation*}
 (\cdot, \cdot): \Gamma(\rmZ(\overline{\rmB}), \calO_{n})_{\frakm}^{\otimes2}\times  \Gamma(\rmZ(\overline{\rmB}), \calO_{n})^{\otimes2}_{\frakm}\rightarrow \calO_{n}.
\end{equation*}
The following is the main result of this subsection which we will refer to as the \emph{reciprocity law for the Flach element}.

\begin{theorem}\label{reciprocity}
Let $p$ be an $n$-admissible prime for $f$ and $\frakm$ be absolutely irreducible. Then the following formula 
\begin{equation*}
(\partial^{(i)}_{p}(\kappa^{[p]}), \phi_{1}\otimes\phi_{2})=\sum_{z\in \rmZ(\overline{\rmB})}\phi_{1}\phi_{2}(z)
\end{equation*}
holds for any $\phi_{1}\otimes\phi_{2}\in \Gamma(\rmZ(\overline{\rmB}),\calO_{n})^{\otimes2}_{\frakm}$ and any $i=1,2$.
\end{theorem}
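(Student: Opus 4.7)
The plan is to exploit the commutative diagram between the divisor map on motivic cohomology and the boundary map $\partial_p$ on étale cohomology, now applied to the regular semi-stable model $\frakY(\rmB)$ rather than to an unramified integral model. Concretely, the same Gersten-style argument proving the previous lemma gives a commutative square in which the left vertical arrow is $\mathrm{div}_{p}\colon \rmH^{3}_{\calM}(\rmX(\rmB)^{2},\ZZ(2))\to \rmH^{2}_{\calM}(\overline{\rmY}(\rmB)\otimes\FF_{p^{2}},\ZZ(1))$, the right vertical arrow is $\partial_{p}$, and the bottom horizontal arrow is the $l$-adic cycle class map into $\rmH^{2}(\overline{\rmY}^{(0)}(\rmB)\otimes\overline{\FF}_{p},\calO_{n}(1))_{\underline{\frakm}^{[p]}}$, which by Remark \ref{sin-remark} is precisely where $\rmH^{1}_{\sin}(\QQ_{p},\rmH^{1}(\rmX(\rmB)\otimes\overline{\QQ}_{p},\calO_{n}(1))^{\otimes 2}_{\frakm^{[p]}})$ sits after taking $\rmG_{\FF_{p^{2}}}$-invariants. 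So it suffices to compute $\mathrm{div}_{p}(\Theta^{[p]}(\rmB))$ as an actual cycle on $\overline{\rmY}(\rmB)$ and then identify its class.

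Since $p$ is a uniformizer of $\ZZ_{p^{2}}$, the divisor of $p$ on the strict transform $\widetilde{\theta}$ of $\theta_{\ast}\rmX(\rmB)$ inside $\frakY(\rmB)$ is the scheme-theoretic intersection of $\widetilde{\theta}$ with the special fiber. Under the identification $\widetilde{\theta}\cong \interX(\rmB)$ the diagonals $\theta(\overline{\rmX}^{\circ})\subset \overline{\rmQ}^{\circ\circ}$ and $\theta(\overline{\rmX}^{\bullet})\subset \overline{\rmQ}^{\bullet\bullet}$ pass through the singular locus $\overline{\rmQ}^{\dagger\dagger}$, so after blow-up one obtains the reduced diagonal cycle $\Delta^{?}\subset \overline{\rmY}^{??}$ inside each of the two Künneth components $\overline{\rmY}^{\circ\circ}$ and $\overline{\rmY}^{\bullet\bullet}$, together with a contribution supported on the exceptional $\PP^{1}$-bundle $\overline{\rmY}^{\dagger\dagger}\to \overline{\rmQ}^{\dagger\dagger}$. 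The latter contribution is Eisenstein (being a fibered class over a Shimura set) and dies after localizing at the absolutely irreducible ideal $\underline{\frakm}^{[p]}$. Hence $\partial_{p}(\kappa^{[p]})$ is the image under the cycle class map of $\Delta^{\circ}+\Delta^{\bullet}$.

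Next, I would trace this through the canonical surjection
\begin{equation*}
\rmH^{2}(\overline{\rmY}^{(0)}(\rmB)\otimes\overline{\FF}_{p},\calO_{n}(1))_{\underline{\frakm}^{[p]}}\twoheadrightarrow \bigoplus_{i=1}^{2}\Gamma(\rmZ(\overline{\rmB}),\calO_{n})^{\otimes 2}_{\frakm}
\end{equation*}
described in Remark \ref{sin-remark}. On each diagonal component $\overline{\rmY}^{\circ\circ}$ or $\overline{\rmY}^{\bullet\bullet}$ the cycle class of the diagonal $\Delta^{?}\subset \PP^{1}(\rmZ^{?}(\overline{\rmB}))\times \PP^{1}(\rmZ^{?}(\overline{\rmB}))$ decomposes under the Künneth formula into its $\rmH^{0}\otimes \rmH^{2}$ and $\rmH^{2}\otimes \rmH^{0}$ parts, each of which is the characteristic function of the diagonal subset $\rmZ^{?}(\overline{\rmB})\hookrightarrow \rmZ^{?}(\overline{\rmB})\times \rmZ^{?}(\overline{\rmB})$; the middle Künneth piece $\rmH^{1}\otimes \rmH^{1}$ is the non-singular part and contributes nothing to $\partial_{p}$. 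Using the identifications $\rmZ^{\circ}(\overline{\rmB})\cong \rmZ(\overline{\rmB})\cong \rmZ^{\bullet}(\overline{\rmB})$, and noting that the split exact sequence of Proposition \ref{split} matches the sub with $\rmH^{0}$-contributions and the quotient with $\rmH^{2}$-contributions, both projections $\partial^{(i)}_{p}(\kappa^{[p]})$ for $i=1,2$ become the diagonal element $\Delta\in \Gamma(\rmZ(\overline{\rmB}),\calO_{n})^{\otimes 2}_{\frakm}$. Pairing against $\phi_{1}\otimes \phi_{2}$ gives
\begin{equation*}
(\Delta,\phi_{1}\otimes \phi_{2})=\sum_{(z_{1},z_{2})\in \rmZ(\overline{\rmB})^{2}}\Delta(z_{1},z_{2})\phi_{1}(z_{1})\phi_{2}(z_{2})=\sum_{z\in \rmZ(\overline{\rmB})}\phi_{1}\phi_{2}(z),
\end{equation*}
as desired.

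The most delicate step is the middle one: namely, the precise bookkeeping that identifies the cycle class of each diagonal $\Delta^{?}$ with the diagonal element in the correct summand of $\bigoplus_{i=1}^{2}\Gamma(\rmZ(\overline{\rmB}),\calO_{n})^{\otimes 2}_{\frakm}$, and the verification that the exceptional contribution from $\overline{\rmY}^{\dagger\dagger}$ and the middle Künneth pieces contribute nothing modulo $\underline{\frakm}^{[p]}$. This is essentially a multiplicity calculation on the blow-up together with an Eisenstein-vs-cuspidal comparison, and it is where Assumption \ref{ass1} (through absolute irreducibility of $\frakm$) and $d$-cleanness of the auxiliary level enter to guarantee that all non-diagonal contributions vanish after localization.
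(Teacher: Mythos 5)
Your proof proposal follows the same route as the paper: you use the commutative diagram relating $\mathrm{div}_p$ on motivic cohomology to $\partial_p$ on \'etale cohomology via the regular model $\frakY(\rmB)$, compute $\mathrm{div}_p(\Theta^{[p]}(\rmB))$ as a cycle on the special fiber, identify its class under the arithmetic level raising isomorphism, and then evaluate the pairing. Your write-up fills in considerably more detail than the paper's terse one-line identification of $\partial_p^{(i)}(\kappa^{[p]})$ with $\vartheta_{\ast}\mathbf{1}_{\overline{\rmB}}$, and the final pairing computation is correct.

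One point worth tightening: there is in fact \emph{no} exceptional contribution from $\overline{\rmY}^{\dagger\dagger}$. Since the blow-up center $\overline{\rmQ}^{\circ\circ}$ meets the diagonal $\theta(\interX(\rmB))$ in a Cartier divisor of the diagonal (namely $\theta(\overline{\rmX}^{\circ})$), the strict transform $\widetilde{\theta}$ is isomorphic to $\interX(\rmB)$, and the divisor of $p$ on it is exactly the reduced special fiber $\overline{\rmX}^{\circ}\cup\overline{\rmX}^{\bullet}$. In the local model $\ZZ_{p^{2}}[x_{1},y_{2},s]/(x_{1}sy_{2}-p)$ the strict transform is $\{s=1\}$, and its special fiber is supported on $\overline{\rmY}^{\circ\circ}\cup\overline{\rmY}^{\bullet\bullet}$ only, meeting $\overline{\rmY}^{\dagger\dagger}$ in a single point but containing no component of it. Moreover, even if there were a contribution supported on $\overline{\rmY}^{\dagger\dagger}$, the justification you give (that it is Eisenstein) is not quite right: $\rmH^{0}(\overline{\rmQ}^{\dagger\dagger},\calO_{n})^{\oplus 2}$ can certainly have cuspidal Hecke eigensystems. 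What kills such a term is simply that the map $\alpha_{2}$ in the paper's diagram is by construction a projection onto the Künneth direct factor indexed by $(?_{0}?_{1})\in\{\circ,\bullet\}^{2}$ and $(i_{0},i_{1})\in\{0,2\}$, so the $\rmH^{0}(\overline{\rmQ}^{\dagger\dagger},\calO_{n})^{\oplus 2}$ summand is discarded regardless of its Hecke support. With that correction, your argument matches the paper's.
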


\begin{proof}
The proof of this will follow from the commutative diagram below 
\begin{equation*}
\begin{tikzcd}
\rmH^{3}_{\calM}(\rmX(\rmB)^{2}\otimes\QQ_{p^{2}}, \ZZ(2)) \arrow[rr, "\mathrm{ch}_{\frakm^{[p]}, n}"] \arrow[ddd, "\mathrm{div}_{p}"'] &  & \rmH^{1}(\QQ_{p^{2}}, \rmH^{1}(\rmX(\rmB)\otimes\overline{\QQ}_{p},\calO_{n}(1))^{\otimes2}_{\frakm^{[p]}}) \arrow[d, "\partial_{p}"] \\
                                            &   & \rmH^{1}_{\mathrm{sin}}(\QQ_{p^{2}}, \rmH^{1}(\rmX(\rmB)\otimes\overline{\QQ}_{p},\calO_{n}(1))^{\otimes2}_{\frakm^{[p]}}) \\
                                            &    &  \bigoplus^{2}_{i=1}\Gamma(\rmZ(\overline{\rmB}),\calO_{n})^{\otimes 2}_{\frakm} \arrow[u,"\alpha_{3}"] \\
\rmH^{2}_{\calM}(\overline{\rmY}(\rmB)\otimes\FF_{p^{2}}, \ZZ(1))    \arrow[rr, "\alpha_{1}"] & &\rmH^{2}(\overline{\rmY}^{(0)}(\rmB)\otimes\overline{\FF}_{p}, \calO_{n}(1))_{\underline{\frakm}^{[p]}}^{\rmG_{\FF_{p^{2}}}}  \arrow[u, "\alpha_{2}"]
\end{tikzcd}
\end{equation*}
where the non-obvious maps are given by
\begin{enumerate}
\item $\alpha_{1}$ is the cycle class map 
\begin{equation*}
\rmH^{2}_{\calM}(\overline{\rmY}(\rmB)\otimes\FF_{p^{2}}, \ZZ(1))\rightarrow \rmH^{2}(\overline{\rmY}(\rmB)\otimes\overline{\FF}_{p}, \calO_{n}(1))_{\underline{\frakm}^{[p]}}^{\rmG_{\FF_{p^{2}}}} 
\end{equation*}
followed by the natural restriction map 
\begin{equation*}
\rmH^{2}(\overline{\rmY}(\rmB)\otimes\overline{\FF}_{p}, \calO_{n}(1))_{\underline{\frakm}^{[p]}}^{\rmG_{\FF_{p^{2}}}}\rightarrow \rmH^{2}(\overline{\rmY}^{(0)}(\rmB)\otimes\overline{\FF}_{p}, \calO_{n}(1))_{\underline{\frakm}^{[p]}}^{\rmG_{\FF_{p^{2}}}};
\end{equation*}
\item $\alpha_{2}$ is given by the following: recall that by Corollary \ref{middle-deg} the cohomology group 
\begin{equation*}
\rmH^{2}(\overline{\rmY}^{(0)}(\rmB), \calO_{n}(1))
\end{equation*}
is given by the direct sum of 
\begin{equation*}
\begin{aligned}
\bigoplus\limits_{(?_{0}?_{1})\in \{\circ, \bullet\}^{2}}&\bigoplus\limits_{(i_{0},i_{1})\in\{0,2\}}\rmH^{i_{0}}(\PP^{1}(\rmZ^{?_{0}}(\overline{\rmB}), \calO_{n}(i_{0}/2))\otimes \rmH^{i_{1}}(\PP^{1}(\rmZ^{?_{1}}(\overline{\rmB}), \calO_{n}(i_{1}/{2}))\\
\end{aligned}
\end{equation*}
and 
\begin{equation*}
\rmH^{0}(\rmQ^{\dagger\dagger},\calO_{n})^{\oplus2}
\end{equation*}
and $\alpha_{2}$ is the map from the direct factor 
\begin{equation*}
\bigoplus\limits_{(?_{0}?_{1})\in \{\circ, \bullet\}^{2}}\bigoplus\limits_{(i_{0},i_{1})\in\{0,2\}}\rmH^{i_{0}}(\PP^{1}(\rmZ^{?_{0}}(\overline{\rmB}), \calO_{n}(i_{0}/2))\otimes \rmH^{i_{1}}(\PP^{1}(\rmZ^{?_{1}}(\overline{\rmB}), \calO_{n}(i_{1}/2)) 
\end{equation*}
to $\bigoplus^{2}\limits_{i=1}\Gamma(\rmZ(\overline{\rmB}),\calO_{n})^{\otimes 2}_{\frakm}$.

\item $\alpha_{3}$ is the arithmetic level raising isomorphism in Proposition \ref{level-raise-prod}.
\end{enumerate}
The proof for the commutativity of the above diagram is the same as that of \cite[Theorem 3.1.1]{Weston1} using the regular pair $(\overline{\rmY}(\rmB)^{(0)}, \frakY(\rmB))$ instead of a smooth pair and  taking into account of Remark \ref{sub-quotient} and Remark \ref{sin-remark}. From the commutativity of this diagram, the class $\partial^{(i)}_{p}(\kappa^{[p]})$ in $\Gamma(\rmZ(\overline{\rmB}),\calO_{n})^{\otimes 2}_{\frakm} $ is given by the function $\vartheta_{\ast}\mathbf{1}_{\overline{\rmB}}$ where $\mathbf{1}_{\overline{\rmB}}$  is the characteristic function  of the Shimura set $\rmZ(\overline{\rmB})$ and where $\vartheta: \rmZ(\overline{\rmB})\rightarrow \rmZ(\overline{\rmB})^{2}$ is the diagonal embedding of the Shimura sets. It follows then that
\begin{equation*}
\begin{aligned}
(\partial^{(i)}_{p}(\kappa^{[p]}), \phi_{1}\otimes\phi_{2})&=(\nu_{\ast}\mathbf{1}_{\overline{\rmB}}, \phi_{1}\otimes\phi_{2})\\
&=\sum\limits_{z\in\rmZ(\overline{\rmB})}\phi_{1}(z)\phi_{2}(z).
\end{aligned}
\end{equation*}
\end{proof}

\begin{remark}\label{depth}
Since $\rmN^{-}$ is square free and consists of odd number of prime factors, $f\in\rmS^{\new}_{2}(\Gamma_{0}(\rmN))$ admits a Jacquet--Langlands transfer to a quaternionic modular form $f^{\dagger}\in \Gamma(\rmZ(\overline{\rmB}),\calO)_{\frakm}$. We will insist that $f^{\dagger}$ is normalized. This means that there is a point $z\in\rmZ(\overline{\rmB})$ such that $f^{\dagger}(z)\mod \lambda \neq 0$. Then the above proposition implies that
\begin{equation*}
(\partial^{(i)}_{p}(\kappa^{[p]}), f^{\dagger}\otimes f^{\dagger})=\langle f^{\dagger}, f^{\dagger}\rangle
\end{equation*} 
where the righthand side is the Petersson norm of $f^{\dagger}$. We will write $\calP(f^{\dagger})=\langle f^{\dagger}, f^{\dagger}\rangle$ and call it the {\em quaternionic period of} of $f$. 
\end{remark}

\section{Bounding the adjoint Selmer groups}
\subsection{Generalities on Selmer groups}
We will abuse the notation and consider a general Galois representation $\rho: \rmG_{\QQ}\rightarrow\GL(\rmV)$ over $\rmE_{\lambda}$. Suppose that $\rmT$ is a stable Galois $\calO$-lattice in $\rmV$ and set $\calM=\rmV/\rmT$.  These Galois modules fit in the exact sequence 
\begin{equation*}
0\rightarrow \rmT\xrightarrow{i} \rmV\xrightarrow{\mathrm{pr}} \calM\rightarrow 0.
\end{equation*}
Let $\rmM_{n}=\calM[\lambda^{n}]$ and $\rmT_{n}=\rmT/\lambda^{n}$. Let $i_{n}: \rmM_{n}\hookrightarrow \rmM$ and $\mathrm{pr}_{n}:\rmT\rightarrow \rmT_{n}$ be the natural inclusion and projection map.

We recall the definitions concerning the local conditions defining the Bloch--Kato Selmer group for $\calM$ and $\rmM_{n}$:
\begin{enumerate}
\item For $v\neq l$, we define $\rmH^{1}_{f}(\QQ_{v},\rmV)=\rmH^{1}_{\mathrm{fin}}(\QQ_{v}, \rmV)$;
\item For $v=l$, we define $\rmH^{1}_{f}(\QQ_{p},\rmV)=\mathrm{ker}(\rmH^{1}(\QQ_{p}, \rmV)\rightarrow \rmH^{1}(\QQ_{p}, \rmV\otimes \rmB_{\mathrm{cris}}))$;
\item We define $\rmH^{1}_{f}(\QQ_{v},\calM)=\mathrm{pr}_{\ast}\rmH^{1}_{f}(\QQ_{v}, \rmV)$ and $\rmH^{1}_{f}(\QQ_{v},\rmM_{n})=i^{\ast}_{n}\rmH^{1}_{f}(\QQ_{v}, \calM)$  for each $v$.
\end{enumerate}
Then we define the Bloch--Kato Selmer group of $\calM$ by
\begin{equation*}
\rmH^{1}_{f}(\QQ, \calM)=\mathrm{ker}\{\rmH^{1}(\QQ,\calM)\rightarrow \prod_{v}\frac{\rmH^{1}(\QQ_{v}, \calM)}{\rmH^{1}_{f}(\QQ_{v},\calM)}\} 
\end{equation*}
for $\calM$. We also define the Bloch--Kato Selmer group $\rmH^{1}_{f}(\QQ, \rmM_{n})$ of $\rmM_{n}$ in the same way. Moreover we have
\begin{equation*}
\rmH^{1}_{f}(\QQ, \calM)=\lim\limits_{\longrightarrow n}\rmH^{1}_{f}(\QQ, \rmM_{n})
\end{equation*}
and have an exact sequence 
\begin{equation*}
0\rightarrow \mathrm{pr}_{\ast}\rmH^{1}_{f}(\QQ, \rmV)\rightarrow \rmH^{1}_{f}(\QQ, \calM)\rightarrow \Sha(\QQ, \calM)\rightarrow 0
\end{equation*}
the Tate-Shafarevich group $\Sha(\QQ, \calM)$ for $\calM$. 

\subsection{Main theorem on the adjoint representations}
Recall $f$ is a modular form of level $\rmN=\rmN^{+}\rmN^{-}$. We assume that $\rmN^{-}$ is square-free and is divisible by odd number of primes. In this case $f$ admits a Jacquet--Langlands transfer to a modular form in $\rmS^{\overline{\rmB}}(\rmN^{+})$ which we assume is normalized and gives rise to an element $f^{\dagger}$ in $\Gamma(\rmZ(\overline{\rmB}), \calO)$. Consider its Galois representation provided by the Eichler--Shimura construction
\begin{equation*}
\rho_{f,\lambda}: \rmG_{\QQ}\rightarrow \GL_{2}(\rmE_{\lambda})=\mathrm{Aut}(\rmV_{f,\lambda})
\end{equation*}
whose representation space is given by $\rmV_{\rho}=\rmV_{f,\lambda}$. We assume the residual Galois representation $\overline{\rho}=\overline{\rho}_{f,\lambda}$ is absolutely irreducible and satisfies all the conditions in Assumption \ref{ass1}. We fix a stable lattice 
$\rmT_{\rho}$ for $\rho_{f,\lambda}$ which is unique up to homothety. Let $\rmT_{n}$ be the reduction of $\rmT$ by $\lambda^{n}$ for each $n\geq 1$. Let $\calM_{\rho}=\rmV_{\rho}/\rmT_{\rho}$ and  we have an exact sequence
\begin{equation*}
0\rightarrow \rmT_{\rho}\rightarrow \rmV_{\rho}\rightarrow \calM_{\rho}\rightarrow 0.
\end{equation*}
Consider the tensor product $\rmT_{\rho}\otimes \rmT_{\rho}$ which decomposes as
\begin{equation}
\rmT_{\rho}\otimes\rmT_{\rho}=\mathrm{Sym}^{2}\rmT_{\rho}\oplus \wedge^{2}\rmT_{\rho}.
\end{equation}
Consider the representation $\mathrm{Sym}^{2}\rmT_{\ast}$ for $\ast\in\{\rho, n\}$, we know that the Cartier dual of $\mathrm{Sym}^{2}\rmT_{\rho}$ is given by $\mathrm{Sym}^{2}\rmT_{\rho}(-1)$ which is also known as $\mathrm{Ad}^{0}(\rmT_{\rho})$. All these constructions and definitions carry over when we replace $\rmT_{\rho}$ by $\calM_{\rho}$. In the following discussion we will be mostly concerned with  $\rmN=\mathrm{Sym}^{2}\rmT_{\rho}$, and $\rmN_{n}=\mathrm{Sym}^{2}\rmT_{n}$ and their Cartier duals are given by $\rmM=\Ad^{0}(\calM_{\rho})$ and $\rmM_{n}=\Ad^{0}(\calM_{\rho})[\lambda^{n}]$. The following theorem is the main step towards the final result of this article.

\begin{theorem}\label{main}
Let $f\in \rmS^{\new}_{2}(\Gamma_{0}(\rmN))$ be a newform of weight $2$ with $\rmN=\rmN^{+}\rmN^{-}$ such that $\rmN^{-}$ is squarefree and consists of odd number of prime factors. Let $f^{\dagger}$ be the normalized Jacquet--Langlands transfer of $f$ in $\Gamma(\rmZ(\overline{\rmB}),\calO)$. Let $\eta=\varpi^{\nu}$ with $\nu=\mathrm{ord}_{\lambda}(\calP(f^{\dagger}))$.
\begin{enumerate}
\item We assume that the residual Galois representation $\overline{\rho}_{f,\lambda}$ satisfies Assumption \ref{ass1}; 
\item We further assume that
\begin{equation*}
\rmH^{1}(\QQ(\rmM_{n})/\QQ, \rmM_{n})=0
\end{equation*} 
for every $n\geq 1$ and where $\QQ(\rmM_{n})$ is the splitting field of the Galois module $\rmM_{n}$. 
\end{enumerate}
Then $\eta$ annihilates the Selmer group $\rmH^{1}_{f}(\QQ, \Ad^{0}(\calM_{\rho}))$. In particular, 
\begin{equation*}
\mathrm{leng}_{\calO}\rmH^{1}_{f}(\QQ, \Ad^{0}(\calM_{\rho}))\leq \nu
\end{equation*}
\end{theorem}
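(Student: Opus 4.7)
The plan is to reduce the bound $\mathrm{leng}_{\calO}\rmH^{1}_{f}(\QQ, \Ad^{0}(\calM_{\rho}))\leq \nu$ to the finite-level statement that $\eta$ annihilates $\rmH^{1}_{f}(\QQ,\rmM_{n})$ for every $n\geq 1$, and then pass to the direct limit via $\rmH^{1}_{f}(\QQ,\Ad^{0}(\calM_{\rho}))=\varinjlim_{n}\rmH^{1}_{f}(\QQ,\rmM_{n})$. Since $\rmN_{n}=\Sym^{2}\rmT_{n}$ is the Cartier dual of $\rmM_{n}$, I would exploit Poitou--Tate global duality to translate the annihilation of Selmer classes of $\rmM_{n}$ into a local pairing computation with the Flach classes $\kappa^{[p]}_{n}\in\rmH^{1}(\QQ,\rmN_{n})$ constructed earlier, indexed by $n$-admissible primes $p$.

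The first step is local analysis of $\kappa^{[p]}_{n}$. For $v\nmid pN\ell d$, the lemma already established in Section~3.4 places $\res_{v}(\kappa^{[p]}_{n})$ in $\rmH^{1}_{\mathrm{fin}}(\QQ_{v},\rmN_{n})$, which under Tate duality is exactly the annihilator of $\rmH^{1}_{f}(\QQ_{v},\rmM_{n})$. For $v\in\Sigma^{+}\cup\Sigma^{-}_{\ram}\cup\Sigma^{-}_{\mix}$ one verifies compatibility with the prescribed local deformation conditions $\calD_{v}$ (respectively minimally ramified, ramified in the sense of \cite{LTXZZa}, and $\calD^{\mathrm{ram}}_{v}$) using that $\kappa^{[p]}_{n}$ comes from a geometric source with these local behaviours controlled by Assumption~\ref{ass1}(3)--(4). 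At $v=\ell$ one uses Fontaine--Laffaille theory: the cohomology $\rmH^{1}(\rmX(\rmB)\otimes\overline{\QQ}_{\ell},\calO_{n}(1))$ is crystalline (since $\ell\nmid pNd$), so $\res_{\ell}(\kappa^{[p]}_{n})$ lies in the crystalline local condition, which under duality matches $\rmH^{1}_{f}(\QQ_{\ell},\rmM_{n})$. At the special place $v=p$, the crucial input is the reciprocity law of Theorem~\ref{reciprocity}: normalizing $f^{\dagger}$ so that $f^{\dagger}(z_{0})\in\calO^{\times}$ for some $z_{0}\in\rmZ(\overline{\rmB})$ and pairing against $f^{\dagger}\otimes f^{\dagger}$ gives, by Remark~\ref{depth}, that $\partial^{(i)}_{p}(\kappa^{[p]}_{n})$ pairs to $\calP(f^{\dagger})$, whence its depth in the rank-one $\calO_{n}$-module $\rmH^{1}_{\mathrm{sin}}(\QQ_{p},\rmN_{n})$ equals $\min(\nu,n)$, i.e. $\partial_{p}(\kappa^{[p]}_{n})$ generates $\eta\cdot\rmH^{1}_{\mathrm{sin}}(\QQ_{p},\rmN_{n})$.

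With these local calculations in place, the second step is the standard Kolyvagin--Flach--Weston argument carried out as in \cite{Flach} and \cite{Weston1}. Given any nontrivial $c\in\rmH^{1}_{f}(\QQ,\rmM_{n})$, the vanishing hypothesis $\rmH^{1}(\QQ(\rmM_{n})/\QQ,\rmM_{n})=0$ combined with the Chebotarev density theorem and Assumption~\ref{ass1}(1)--(2) (which guarantees a sufficiently large image of $\overline{\rho}$) produces an $n$-admissible prime $p$ such that $\res_{p}(c)\in\rmH^{1}_{\mathrm{fin}}(\QQ_{p},\rmM_{n})$ is a generator of that cyclic $\calO_{n}$-module. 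Poitou--Tate global duality then yields
\begin{equation*}
\sum_{v}\langle\res_{v}(\kappa^{[p]}_{n}),\res_{v}(c)\rangle_{v}=0,
\end{equation*}
and by step one all terms with $v\neq p$ vanish, so
\begin{equation*}
\langle\partial_{p}(\kappa^{[p]}_{n}),\res_{p}(c)\rangle_{p}=0.
\end{equation*}
Since $\partial_{p}(\kappa^{[p]}_{n})$ has depth $\eta$ in $\rmH^{1}_{\mathrm{sin}}(\QQ_{p},\rmN_{n})$ and $\res_{p}(c)$ is a generator of the dual finite part, this forces $\eta\cdot c=0$. Hence $\eta$ annihilates $\rmH^{1}_{f}(\QQ,\rmM_{n})$; passing to the limit over $n$ gives $\eta\cdot\rmH^{1}_{f}(\QQ,\Ad^{0}(\calM_{\rho}))=0$ and the length bound follows.

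The main obstacle is the Chebotarev step: one needs $n$-admissible primes $p$ that simultaneously satisfy the congruence $\varpi^{n}\mid p+1-\epsilon_{p}(f)a_{p}(f)$ \emph{and} separate a prescribed class $c\in\rmH^{1}_{f}(\QQ,\rmM_{n})$ via its Frobenius. This is precisely where the bigness of the image of $\overline{\rho}$ and the triviality of $\rmH^{1}(\QQ(\rmM_{n})/\QQ,\rmM_{n})$ are used in an essential way, ensuring that the relevant Chebotarev set cuts out a nonempty union of Frobenius conjugacy classes in $\Gal(\QQ(\rmM_{n},\mu_{\ell^{n}})/\QQ)$. All other ingredients --- the local computations at ramified places, crystallinity at $\ell$, and the reciprocity formula at $p$ --- are either already established in the excerpt or follow formally from the Euler-system machinery.
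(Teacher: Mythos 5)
Your plan is a correct and, in fact, arguably cleaner rendering of the Flach--Weston argument than the one written in the paper, but it is organized differently. The paper separates the argument into Lemma~\ref{lm1} and Lemma~\ref{lm2}: it fixes a single $n$-admissible prime $p$, proves $\eta\,\rmH^{1}_{f}(\QQ,\rmM_{n})\subset\rmH^{1}_{\{p\}}(\QQ,\rmM_{n})$ by the global duality argument (exactly your step involving $\langle\partial_{p}(\kappa^{[p]}_{n}),\res_{p}(c)\rangle_{p}=0$), and then claims in Lemma~\ref{lm2} that for this \emph{fixed} $p$ one has $\rmH^{1}_{\{p\}}(\QQ,\rmM_{n})\subset\rmH^{1}(\Delta_{n},\rmM_{n})$, so that the hypothesis $\rmH^{1}(\Delta_{n},\rmM_{n})=0$ finishes the proof. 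You instead run the Chebotarev step on the Selmer side: given $c\in\rmH^{1}_{f}(\QQ,\rmM_{n})$, you choose an $n$-admissible $p$ \emph{adapted to $c$} so that $\res_{p}(c)$ is large, and then the single local term at $p$ forces $\eta c=0$. This is the more robust organization, closer to \cite{Flach} and \cite{Weston1} as actually written; in particular the proof of Lemma~\ref{lm2} in the paper seems to require $\res_{q}(s)=0$ for the prime $q$ beneath the Chebotarev-selected place $v'$ (which need not equal $p$), and since the Frobenius at $p$ is not invertible minus one on $\rmM_{n}$ (its eigenvalues are $1,p,p^{-1}$), its conjugacy class in $\Gal(\rmF'_{n}/\QQ)$ does not exhaust $\tau'\Gamma$, so the paper's argument genuinely needs the auxiliary prime to vary with $g$ (equivalently, with the class). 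Your reformulation absorbs this correctly.

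Two precision points. First, you assert that Chebotarev produces $p$ with $\res_{p}(c)$ a \emph{generator} of the cyclic $\calO_{n}$-module $\rmH^{1}_{\mathrm{fin}}(\QQ_{p},\rmM_{n})$; this is only possible when $\ord(c)=n$, so the correct formulation is that $\res_{p}(c)$ has the same $\calO_{n}$-order as the image of $c$ in $\Hom_{\Delta_{n}}(\rmG_{\rmF_{n}},\rmM_{n})$, which by $\rmH^{1}(\Delta_{n},\rmM_{n})=0$ equals $\ord(c)$. One also needs to arrange simultaneously the Frobenius condition in $\QQ(\rmT_{n},\mu_{\ell^{n}})$ guaranteeing $n$-admissibility of $p$ and the Frobenius condition in $\rmF'_{n}$ controlling $\res_{p}(c)$; this is where the bigness of $\im\overline{\rho}$ is used and deserves a sentence. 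Second, both you and the paper pass from "$\eta$ annihilates $\rmH^{1}_{f}(\QQ,\rmM_{n})$ for every $n$" to the stated length bound $\leq\nu$ with no further justification; annihilation by $\varpi^{\nu}$ alone does not control the $\calO$-length, so one should either argue that the restriction map $\rmH^{1}_{f}(\QQ,\rmM_{n})\to\rmH^{1}_{\mathrm{fin}}(\QQ_{p},\rmM_{n})\cong\calO_{n}$ is injective for a well-chosen $p$ (giving cyclicity), or appeal to the quantitative version of the geometric Euler system machinery in \cite{Weston1}. This is a shared gap rather than an issue specific to your write-up, but it is worth flagging since the length bound is used with an exact equality in the final theorem of the paper.
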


\subsection{The Flach system argument} 
To prove the above theorem, we will show that under the assumptions of the theorem, $\eta$ annihilates each finite Selmer group $\rmH^{1}_{f}(\QQ, \rmM_{n})$. Here the argument follows closely that of \cite{Flach} and \cite{Weston1}. 

\begin{lemma}
Let $p$ be an $n$-admissible prime for $f$. Then the cohomology 
\begin{equation*}
\rmH^{1}(\rmX(\rmB)\otimes\overline{\QQ},\calO_{n}(1))^{\otimes2}_{\frakm^{[p]}}
\end{equation*}
is isomorphic to a direct sum of two copies of $\rmT_{n}\otimes\rmT_{n}$. In particular, there is a projection from $\rmH^{1}(\rmX(\rmB)\otimes\overline{\QQ},\calO_{n}(1))^{\otimes2}_{\frakm^{[p]}}$ to $\rmN_{n}$.
\end{lemma}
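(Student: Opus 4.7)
Plan of proof:

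The input is the freeness statement already established in Lemma \ref{free}: the module $\rmH^{1}(\rmX(\rmB)\otimes\overline{\QQ}, \calO)_{\frakm^{[p]}}$ is free of rank four over $\sfT^{\ram}_{\frakm^{[p]}} \cong \rmR^{\ram}$. The $\rmG_{\QQ}$-action on this module comes, via the universal $R = T$ isomorphism, from the universal deformation $\rho_{\rmR^{\ram}}: \rmG_{\QQ} \to \GL_{2}(\rmR^{\ram})$, so the underlying Galois representation is a free $\rmR^{\ram}$-module of rank two. Hence as $\rmR^{\ram}[\rmG_{\QQ}]$-modules there is a canonical identification
\begin{equation*}
\rmH^{1}(\rmX(\rmB)\otimes\overline{\QQ}, \calO)_{\frakm^{[p]}} \cong \rho_{\rmR^{\ram}}^{\oplus 2},
\end{equation*}
where the multiplicity two is forced by comparing $\rmR^{\ram}$-ranks.

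First, I would specialize along the surjection $\phi^{[p]}_{f,n}: \sfT^{\ram}_{\frakm^{[p]}} \twoheadrightarrow \calO_{n}$ supplied by Theorem \ref{level-raise-curve}, whose kernel is $\frakp^{[p]}_{n}$. By definition of the Galois lattice $\rmT_{n}$ attached to $f$ modulo $\lambda^{n}$, we have $\rho_{\rmR^{\ram}}\otimes_{\rmR^{\ram}}\calO_{n} \cong \rmT_{n}$, so reducing the previous isomorphism modulo $\frakp^{[p]}_{n}$ (which for the localized module with $\calO_{n}$-coefficients coincides with passing to $\calO_{n}$-coefficients localized at $\frakm^{[p]}$) yields an isomorphism of $\calO_{n}[\rmG_{\QQ}]$-modules
\begin{equation*}
\rmH^{1}(\rmX(\rmB)\otimes\overline{\QQ}, \calO_{n}(1))_{\frakm^{[p]}} \cong \rmT_{n}^{\oplus 2}.
\end{equation*}
Tensoring this isomorphism with itself over $\calO_{n}$ then gives a direct sum of copies of $\rmT_{n}\otimes \rmT_{n}$, establishing the first claim of the lemma (after identifying the tensor square with the localization at $\frakm^{[p]}$, using that $\rmH^{1}_{\frakm^{[p]}}$ is a free $\calO_{n}$-module and that the $\frakm^{[p]}$-localization of the tensor product agrees with the tensor product of the $\frakm^{[p]}$-localizations).

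For the second statement, recall that $l \geq 5$ (so in particular $l \neq 2$), which gives a canonical $\calO_{n}[\rmG_{\QQ}]$-splitting $\rmT_{n}\otimes \rmT_{n} = \Sym^{2}\rmT_{n} \oplus \wedge^{2}\rmT_{n} = \rmN_{n} \oplus \calO_{n}(1)$ via the idempotents $\tfrac{1}{2}(1 \pm \tau)$ for the swap $\tau$. Composing the projection onto one summand of the direct sum decomposition of $\rmH^{1}(\rmX(\rmB)\otimes\overline{\QQ}, \calO_{n}(1))^{\otimes 2}_{\frakm^{[p]}}$ with the natural projection $\rmT_{n}\otimes\rmT_{n} \twoheadrightarrow \Sym^{2}\rmT_{n} = \rmN_{n}$ then produces the desired surjection onto $\rmN_{n}$.

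The main obstacle is conceptual rather than technical: one must justify that the Galois action on the rank-four free module over $\sfT^{\ram}_{\frakm^{[p]}}$ really is $\rho_{\rmR^{\ram}}^{\oplus 2}$ and not a more exotic $\rmR^{\ram}$-linear deformation, and that the specialization and localization operations commute as expected. Both points are built into the $R = T$ framework and the proof of Lemma \ref{free} (which itself leans on the Taylor--Wiles method as invoked via \cite[Theorem 3.6.3]{LTXZZa} and the $d$-cleanness hypothesis), so no new input beyond what is already in the paper is required.
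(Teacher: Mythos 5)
Your proposal is correct and lands on the same intermediate statement the paper targets, namely that $\rmH^{1}(\rmX(\rmB)\otimes\overline{\QQ},\calO_{n}(1))_{\frakm^{[p]}}\cong\rmT_{n}^{\oplus 2}$, but it gets there by a slightly different route. The paper's proof first invokes the argument of \cite[Theorem 5.17]{BD-Main} to identify $\rmH^{1}(\rmX_{\rmN^{+}}(\rmB)\otimes\overline{\QQ},\calO_{n}(1))_{\frakm^{[p]}}$ with a single copy of $\rmT_{n}$, and then doubles via $d$-cleanness, Lemma \ref{clean-switch}, and the proof of Lemma \ref{free}. You instead start directly from the freeness of $\rmH^{1}(\rmX(\rmB)\otimes\overline{\QQ},\calO)_{\frakm^{[p]}}$ of rank four over $\sfT^{\ram}_{\frakm^{[p]}}\cong\rmR^{\ram}$ in Lemma \ref{free}(2), interpret the Galois action through the universal deformation $\rho_{\rmR^{\ram}}$, deduce $\rho_{\rmR^{\ram}}^{\oplus 2}$ by comparing ranks, and specialize along $\phi^{[p]}_{f,n}$. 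Both arguments ultimately rest on the same Taylor--Wiles freeness and multiplicity-one input that underlies Lemma \ref{free}; yours is phrased in a more deformation-theoretic way, the paper's in a more concrete ``establish at level $\rmN^{+}$, then double'' way. Two small remarks: (i) the passage from rank-four freeness to $\rho_{\rmR^{\ram}}^{\oplus 2}$ deserves a word of justification (the module is an $\rmR^{\ram}[\rmG_{\QQ}]$-module whose trace of Frobenius matches the universal deformation at almost all primes, and absolute irreducibility of $\overline{\rho}$ then forces it to be a direct sum of copies of $\rho_{\rmR^{\ram}}$); this is essentially the content of the Boston--Lenstra--Ribet argument already cited in Lemma \ref{clean-switch}. (ii) You correctly flag that $l\geq 5$ is what makes the $\Sym^{2}\oplus\wedge^{2}$ splitting canonical, a point the paper leaves implicit in this lemma.
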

\begin{proof}
By the same argument as in \cite[Theorem 5.17]{BD-Main}, the cohomology 
$\rmH^{1}(\rmX_{\rmN^{+}}(\rmB)\otimes\overline{\QQ},\calO_{n}(1))_{\frakm^{[p]}}$
is isomorphic to $\rmT_{n}$. By the cleanness of $d$ and Lemma \ref{clean-switch} and the proof Lemma \ref{free},   $\rmH^{1}(\rmX(\rmB)\otimes\overline{\QQ},\calO_{n}(1))_{\frakm^{[p]}}$ is isomorphic to two copies of $\rmH^{1}(\rmX_{\rmN^{+}}(\rmB)\otimes\overline{\QQ},\calO_{n}(1))_{\frakm^{[p]}}$. The lemma follows .
\end{proof}
Recall, we have the global cohomology class 
\begin{equation*}
\kappa^{[p]}\in \rmH^{1}(\QQ, \rmH^{1}(\rmX(\rmB)\otimes\overline{\QQ},\calO(1))^{\otimes2}_{\frakm^{[p]}}). 
\end{equation*}
By the above lemma, we can project this class to $\rmH^{1}(\QQ, \rmN_{n})$ and we will denote by $\kappa^{[p]}_{n}$ the resulting class. We say $\kappa^{[p]}_{n}$ has singular depth $\eta$ at $p$ if the quotient $\rmH^{1}_{\sin}(\QQ_{p}, \rmN_{n})/\partial_{p}(\kappa^{[p]}_{n})$ is annihilated by $\eta$.

\begin{lemma}\label{index}
Let $p$ be an $n$-admissible prime for $f$. Then the singular part of the cohomology $\rmH^{1}_{\sin}(\QQ_{p}, \rmN_{n})$ is free of rank one
over $\calO_{n}$ and the Flach class $\kappa^{[p]}_{n}$ has singular depth $\eta$.
\end{lemma}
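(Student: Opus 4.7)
The lemma has two parts: the rank-one freeness of $\rmH^1_{\sin}(\QQ_p, \rmN_n)$, and the computation of the singular depth of $\kappa^{[p]}_n$. My approach is to reduce the first to an explicit computation of Frobenius eigenvalues, and the second to a combination of Proposition \ref{level-raise-prod} with the reciprocity formula of Theorem \ref{reciprocity}.

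First I would establish the rank-one freeness by computing Frobenius eigenvalues. Since $p$ is $n$-admissible, the Galois module $\rmT_n$ is unramified at $p$, and the tame character gives
\[
\rmH^1_{\sin}(\QQ_p, \rmN_n) \cong \rmN_n(-1)^{\Frob_p = 1}.
\]
The $n$-admissibility condition $\varpi^n \mid p+1-\epsilon_p(f)a_p(f)$ together with $\det \rho_{f,\lambda}=\chi_l$ forces the characteristic polynomial of $\Frob_p$ on $\rmT_n$ to split as $(X-\epsilon_p(f)p)(X-\epsilon_p(f))$ modulo $\varpi^n$, so the Frobenius eigenvalues on $\Sym^2 \rmT_n(-1)$ are $p$, $1$, and $p^{-1}$. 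Since $l \nmid p^2-1$, both $p-1$ and $p^{-1}-1$ are units in $\calO_n$; hence $(\Frob_p-1)$ is invertible on the $p$- and $p^{-1}$-eigenspaces, so the $1$-eigenspace is a rank-one free $\calO_n$-direct summand of $\rmN_n(-1)$.

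Next I would compute the singular depth of $\kappa^{[p]}_n$. The arithmetic level-raising isomorphism of Proposition \ref{level-raise-prod} identifies the target of $\partial_p$ on the ambient cohomology with $\bigoplus_{i=1}^{2}\Gamma(\rmZ(\overline{\rmB}),\calO_n)^{\otimes 2}_{\frakm}$, and the further projection
\[
\rmH^1(\rmX(\rmB)\otimes\overline{\QQ},\calO_n(1))^{\otimes 2}_{\frakm^{[p]}}\twoheadrightarrow \rmN_n
\]
from the previous lemma corresponds, under this identification, to evaluation against $f^{\dagger}\otimes f^{\dagger}$ via the pairing on $\Gamma(\rmZ(\overline{\rmB}),\calO_n)^{\otimes 2}_{\frakm}$. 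By the freeness of $\Gamma(\rmZ(\overline{\rmB}),\calO)_{\frakm}$ over $\sfT^{\un}_{\frakm}$ (Lemma \ref{free}) and the normalization of $f^{\dagger}$, the element $f^{\dagger}\otimes f^{\dagger}$ generates the relevant $\phi_f$-eigencomponent; consequently this evaluation identifies $\rmH^1_{\sin}(\QQ_p,\rmN_n)$ with $\calO_n$ and sends $\partial_p(\kappa^{[p]}_n)$ to the pairing
\[
(\partial^{(i)}_p(\kappa^{[p]}),\, f^{\dagger}\otimes f^{\dagger}) = \langle f^{\dagger},f^{\dagger}\rangle = \calP(f^{\dagger}),
\]
by Theorem \ref{reciprocity} and Remark \ref{depth}. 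Since $\ord_\lambda(\calP(f^{\dagger}))=\nu$, the image of $\partial_p(\kappa^{[p]}_n)$ is $\eta$ up to a unit of $\calO_n$, which is precisely the statement that $\kappa^{[p]}_n$ has singular depth $\eta$.

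The main obstacle is verifying that the projection $\rmH^1(\rmX(\rmB)\otimes\overline{\QQ},\calO_n(1))^{\otimes 2}_{\frakm^{[p]}}\twoheadrightarrow \rmN_n$ is \emph{exactly} compatible—up to a unit—with the level-raising isomorphism of Proposition \ref{level-raise-prod} and the Petersson-type pairing, so that evaluation against $f^{\dagger}\otimes f^{\dagger}$ lands on an actual generator of the rank-one module $\rmH^1_{\sin}(\QQ_p,\rmN_n)$ rather than on a proper $\varpi$-multiple thereof. This requires carefully tracking the identifications from the proofs of Lemma \ref{clean-switch}, Lemma \ref{free}, and Proposition \ref{split}, together with the explicit description of the monodromy sub-quotients of Remark \ref{sub-quotient} and the singular-cohomology description of Remark \ref{sin-remark}; the cleanness of the auxiliary prime $d$ is essential here to absorb the multiplicity factors arising from passing between the Eichler level $\rmK_{\rmN^+}$ and the rigidified level $\rmK_{\rmN^+,d}$.
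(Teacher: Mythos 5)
Your proof follows essentially the same approach as the paper: the rank-one freeness of $\rmH^{1}_{\sin}(\QQ_{p},\rmN_{n})$ is deduced from the Frobenius eigenvalues $1,p,p^{2}$ on $\rmN_{n}$ (equivalently $p^{-1},1,p$ on $\rmN_{n}(-1)$) being pairwise distinct modulo $\varpi$ thanks to $l\nmid p^{2}-1$, and the singular depth is read off from the reciprocity formula of Theorem \ref{reciprocity} combined with Remark \ref{depth}. The normalization compatibility you flag in your final paragraph is a legitimate subtlety, but the paper's own proof is no more explicit on that point, simply asserting that the depth statement ``follows from Remark \ref{depth} after the reciprocity law.''
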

\begin{proof}
By the definition of an $n$-admissible prime, it is easily calculated that the Frobenius eigenvalues at $p$ for $\rmN_{n}$ are distinct and are given by $1, p, p^{2}$. Since we have 
\begin{equation*}
\rmH^{1}_{\sin}(\QQ_{p}, \rmN_{n})=\Hom(\ZZ_{l}(1),  \rmN_{n})^{\rmG_{\FF_{p}}}=(\rmN_{n}(-1))^{\rmG_{\FF_{p}}}, 
\end{equation*}
$\rmH^{1}_{\sin}(\QQ_{p}, \rmN_{n})$ is free of rank one over $\calO_{n}$.  The assertions for the singular depth follows from Remark \ref{depth} after the reciprocity law proved in Proposition \ref{reciprocity}. 
\end{proof}

\begin{lemma}\label{lm1}
Let $p$ be an $n$-admissible prime for $f$ and define 
\begin{equation*}
\rmH^{1}_{\{p\}}(\QQ,\rmM_{n})=\mathrm{ker}\{\rmH^{1}(\QQ,\rmM_{n})\rightarrow \rmH^{1}(\QQ_{p},\rmM_{n})\}.
\end{equation*}
Then we have 
\begin{equation*}
\eta\rmH^{1}_{f}(\QQ, \rmM_{n})\subset \rmH^{1}_{\{p\}}(\QQ,\rmM_{n}).
\end{equation*}
\end{lemma}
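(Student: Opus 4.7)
The strategy is the standard Euler system reciprocity argument, using the Flach class $\kappa^{[p]}_{n} \in \rmH^{1}(\QQ,\rmN_{n})$ as the pivot. Since $\rmN_{n}$ and $\rmM_{n}$ are Cartier dual (as $\rmM_{n} = \Sym^{2}\calM_{n}(-1)$ and $\rmN_{n} = \Sym^{2}\rmT_{n}$), for any class $c \in \rmH^{1}(\QQ,\rmM_{n})$ global reciprocity (Poitou--Tate) gives
\begin{equation*}
\sum_{v} \langle \mathrm{res}_{v}(\kappa^{[p]}_{n}),\, \mathrm{res}_{v}(c)\rangle_{v} \;=\; 0,
\end{equation*}
where $\langle\cdot,\cdot\rangle_{v}$ is the local Tate pairing. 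The idea is to show that when $c \in \rmH^{1}_{f}(\QQ,\rmM_{n})$, every local term with $v \neq p$ vanishes, so that the term at $p$ vanishes as well, and then to extract the conclusion from the depth information of $\partial_{p}(\kappa^{[p]}_{n})$ provided by Lemma \ref{index}.

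The local vanishing will be organized by the location of $v$. For $v \nmid p\rmN l$, the previous lemma (just before Proposition \ref{level-raise-prod}, applied after projection to $\rmN_{n}$) shows that $\mathrm{res}_{v}(\kappa^{[p]}_{n}) \in \rmH^{1}_{\mathrm{fin}}(\QQ_{v},\rmN_{n}) = \rmH^{1}_{f}(\QQ_{v},\rmN_{n})$; since $c$ is Bloch--Kato Selmer, $\mathrm{res}_{v}(c) \in \rmH^{1}_{f}(\QQ_{v},\rmM_{n})$, and these two subspaces annihilate each other under local Tate duality. For $v \mid \rmN$, a similar argument via the semistable integral model of $\rmX(\rmB)^{2}$ at $v$ (using the divisor map $\mathrm{div}_{v}$ vanishing on $\Theta^{[p]}(\rmB)$ because $p$ is a unit at $v$) puts $\mathrm{res}_{v}(\kappa^{[p]}_{n})$ into the local Bloch--Kato subgroup dual to $\rmH^{1}_{f}(\QQ_{v},\rmM_{n})$. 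For $v = l$, one invokes that the $l$-adic \'etale Chern character factors through the Bloch--Kato crystalline/de Rham local condition, so again $\mathrm{res}_{l}(\kappa^{[p]}_{n}) \in \rmH^{1}_{f}(\QQ_{l},\rmN_{n})$, which annihilates $\mathrm{res}_{l}(c)$. Hence the sum collapses to the single place $p$.

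At $v = p$, since $p \neq l$ and $c \in \rmH^{1}_{f}(\QQ,\rmM_{n})$, we have $\mathrm{res}_{p}(c) \in \rmH^{1}_{\mathrm{fin}}(\QQ_{p},\rmM_{n})$. Local Tate duality induces a perfect pairing
\begin{equation*}
\rmH^{1}_{\sin}(\QQ_{p},\rmN_{n}) \times \rmH^{1}_{\mathrm{fin}}(\QQ_{p},\rmM_{n}) \longrightarrow \calO_{n},
\end{equation*}
and the global reciprocity identity reduces to $\langle \partial_{p}(\kappa^{[p]}_{n}),\, \mathrm{res}_{p}(c)\rangle_{p} = 0$. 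By Lemma \ref{index}, $\rmH^{1}_{\sin}(\QQ_{p},\rmN_{n})$ is free of rank one over $\calO_{n}$, say generated by some $e$, and $\partial_{p}(\kappa^{[p]}_{n}) = u\eta\cdot e$ for some unit $u$. The perfect pairing identifies $\rmH^{1}_{\mathrm{fin}}(\QQ_{p},\rmM_{n})$ with $\Hom_{\calO_{n}}(\rmH^{1}_{\sin}(\QQ_{p},\rmN_{n}), \calO_{n}) \cong \calO_{n}$ via $x \mapsto \langle e, x\rangle_{p}$. Therefore the identity $\eta \cdot \langle e,\mathrm{res}_{p}(c)\rangle_{p} = 0$ together with injectivity of this identification yields $\eta\cdot\mathrm{res}_{p}(c) = 0$, which is precisely the assertion $\eta c \in \rmH^{1}_{\{p\}}(\QQ,\rmM_{n})$.

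The routine inputs are the reciprocity identity and the local Tate duality between $\rmH^{1}_{\sin}$ and $\rmH^{1}_{\mathrm{fin}}$; the decisive input is Lemma \ref{index} packaging the singular depth $\eta$. The main technical step that needs verification in our setting is the local condition check at the auxiliary places $v \mid \rmN$ (showing that the restriction of the Chern character of $\Theta^{[p]}(\rmB)$ lands in $\rmH^{1}_{f}(\QQ_{v},\rmN_{n})$), and the $v = l$ analysis identifying the image of the Abel--Jacobi map with the crystalline local condition; both follow the template of \cite{Flach, Weston1} but must be adapted to the quaternionic Shimura surface $\rmX(\rmB)^{2}$ with its regular model $\mathfrak{Y}(\rmB)$ introduced above.
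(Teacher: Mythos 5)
Your proposal is correct and follows essentially the same route as the paper's proof: global Poitou--Tate reciprocity pairing $\kappa^{[p]}_{n}$ against Selmer classes of $\rmM_{n}$, local orthogonality of Bloch--Kato conditions at $v\neq p$ (citing the Flach--Weston template for $v\mid\rmN$ and $v=l$), and extraction of the annihilation via the rank-one perfect pairing $\rmH^{1}_{\sin}(\QQ_{p},\rmN_{n})\times\rmH^{1}_{\mathrm{fin}}(\QQ_{p},\rmM_{n})\to\calO_{n}$ together with the singular depth $\eta$ from Lemma~\ref{index}. Your explicit unpacking of the depth condition as $\partial_{p}(\kappa^{[p]}_{n})=u\eta\cdot e$ is just a rephrasing of the paper's statement that $\eta\rmH^{1}_{\sin}(\QQ_{p},\rmN_{n})$ is contained in the line generated by $\partial_{p}(\kappa^{[p]}_{n})$, and leads to the same conclusion.
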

\begin{proof}
We consider the element $\kappa^{[p]}_{n}$ in $\rmH^{1}(\QQ,\rmN_{n})$. We have verified that $\mathrm{res}_{v}(\kappa^{[p]}_{n})$ lies in the finite part for all $v\nmid p\rmN$. On the other hand, the same proofs for \cite[Lemma 2.8]{Flach} and \cite[Lemma 2.10]{Flach} carry over here and shows that $\mathrm{res}_{v}(\kappa^{[p]}_{n})$ lies in $ \rmH^{1}_{f}(\QQ_{v},\rmM_{n})$ for $v\mid \rmN$. Under the local Tate duality
\begin{equation*}
\langle\cdot,\cdot\rangle_{v}: \rmH^{1}(\QQ_{v},\rmN_{n})\times \rmH^{1}(\QQ_{v},\rmM_{n})\rightarrow \calO_{n},
\end{equation*}
it is well--known that the local conditions $\rmH^{1}_{f}(\QQ_{v},\rmN_{n})$ are orthogonal to  $\rmH^{1}_{f}(\QQ_{v},\rmM_{n})$. At the place $p$, it induces a perfect pairing 
\begin{equation*}
\langle\cdot,\cdot\rangle_{p}: \rmH^{1}_{\mathrm{sin}}(\QQ_{p},\rmN_{n})\times \rmH^{1}_{\mathrm{fin}}(\QQ_{p},\rmM_{n})\rightarrow \calO_{n}.
\end{equation*}
Let $s$ be any element in $\rmH^{1}_{f}(\QQ, \rmM_{n})$, then by the global class field theory, we have
\begin{equation*}
\sum_{v}\langle \mathrm{res}_{v}(s), \mathrm{res}_{v}(\kappa^{[p]}_{n})\rangle_{v}=0.
\end{equation*}
This reduces to $\langle\mathrm{res}_{p}(s), \mathrm{res}_{p}(\kappa^{[p]}_{n})\rangle_{p}=0$ by the discussions above. Since by Lemma \ref{index}, we know that $\eta\rmH^{1}_{\sin}(\QQ_{p}, \rmN_{n})$ is contained in the line generated by $\partial_{p}(\kappa^{[p]}_{n})$. Therefore $\eta\res_{p}(s)$ has to vanish. Therefore $\eta\rmH^{1}_{f}(\QQ, \rmM_{n})\subset \rmH^{1}_{\{p\}}(\QQ,\rmM_{n})$ follows.
\end{proof}

\begin{lemma}\label{lm2}
Let $\rmF_{n}=\QQ(\rmM_{n})$ be the splitting field for $\rmM_{n}$ and $\Delta_{n}=\Gal(\rmF_{n}/\QQ)$. Then we have
\begin{equation*}
\rmH^{1}_{\{p\}}(\QQ, \rmM_{n})\subset \rmH^{1}(\Delta_{n}, \rmM_{n})
\end{equation*}
where $\rmH^{1}(\Delta_{n}, \rmM_{n})$ is considered as a subgroup of $\rmH^{1}(\QQ, \rmM_{n})$ via inflation.
\end{lemma}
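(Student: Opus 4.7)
The plan is to apply the inflation--restriction exact sequence for the normal subgroup $G_{\rmF_{n}}\trianglelefteq G_{\QQ}$ with quotient $\Delta_{n}$. Since $\rmF_{n}=\QQ(\rmM_{n})$ is defined precisely so that $G_{\rmF_{n}}$ acts trivially on $\rmM_{n}$, the five-term sequence reads
\begin{equation*}
0\to \rmH^{1}(\Delta_{n},\rmM_{n})\xrightarrow{\mathrm{inf}} \rmH^{1}(\QQ,\rmM_{n})\xrightarrow{\mathrm{res}} \Hom(G_{\rmF_{n}},\rmM_{n})^{\Delta_{n}},
\end{equation*}
with $\Delta_{n}$ acting on $\Hom(G_{\rmF_{n}},\rmM_{n})$ by conjugation on the source and by its tautological action on the target. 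This reduces the claim to showing that every $s\in\rmH^{1}_{\{p\}}(\QQ,\rmM_{n})$ has zero restriction $\phi_{s}:=s|_{G_{\rmF_{n}}}\colon G_{\rmF_{n}}\to\rmM_{n}$.

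The immediate consequence of $\res_{p}(s)=0$ is that $\phi_{s}$ vanishes on every decomposition group at a prime $\mathfrak{P}$ of $\rmF_{n}$ above $p$. Being $\Delta_{n}$-equivariant, $\phi_{s}$ also has $\Delta_{n}$-stable image in $\rmM_{n}$. By Assumption \ref{ass1}(2), the action of $\Delta_{n}$ on $\rmM_{n}=\Sym^{2}(\rmT_{n})(-1)$ factors through a subgroup of $\GL(\rmM_{n})$ containing the image of $\GL_{2}(\FF_{l})$ via $\Sym^{2}$, which severely constrains the possible nonzero $\Delta_{n}$-stable subgroups of $\rmM_{n}$ and controls which Frobenius conjugacy classes detect them.

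The essential remaining step, and the main obstacle, is to promote the vanishing at $p$ to the global vanishing $\phi_{s}\equiv 0$. I would follow the strategy of \cite{Flach} and \cite{Weston1}: assuming $\phi_{s}\neq 0$, one applies the Chebotarev density theorem inside $\rmF_{n}/\QQ$ to produce an auxiliary prime $q\neq p$ whose Frobenius class in $\Delta_{n}$ simultaneously (i) lies in a conjugacy class on which $\phi_{s}$ takes a nonzero value, and (ii) is $n$-admissible for $f$ in the sense of Definition \ref{n-adm}. The compatibility of (i) and (ii) is the delicate group-theoretic heart of the argument, made possible by the big-image Assumption \ref{ass1}(2). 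With such a $q$ in hand, one forms the Flach class $\kappa^{[q]}_{n}$ and runs the orthogonality argument of Lemma \ref{lm1} at $q$ in place of $p$: the global reciprocity $\sum_{v}\langle \res_{v}(s),\res_{v}(\kappa^{[q]}_{n})\rangle_{v}=0$ together with the depth computation of Lemma \ref{index} and the reciprocity law (Theorem \ref{reciprocity}) forces $\res_{q}(s)$ to annihilate a generator of the singular line at $q$, contradicting the nonvanishing of $\phi_{s}(\mathrm{Frob}_{\mathfrak{Q}})$ for $\mathfrak{Q}\mid q$.
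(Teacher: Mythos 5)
Your opening step is the same as the paper's: the inflation--restriction exact sequence reduces the lemma to showing that the $\Delta_{n}$-equivariant restriction $\phi_{s}\colon \rmG_{\rmF_{n}}\to\rmM_{n}$ of any $s\in\rmH^{1}_{\{p\}}(\QQ,\rmM_{n})$ vanishes. However, your route for the remaining step departs completely from the paper's, and it has a genuine gap.

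The paper's argument here is a direct, purely Galois-cohomological one: choose a cocycle $\tilde{s}$ representing $s$, let $\rmF'_{n}$ be the fixed field of $\ker\tilde{s}$ and $\Gamma=\Gal(\rmF'_{n}/\rmF_{n})$, and fix a Frobenius $\tau$ at $p$ in $\Delta_{n}$ with a lift $\tau'\in\Gal(\rmF'_{n}/\QQ)$. Using $\res_{p}(s)=0$ to make $\tilde{s}$ a coboundary on the decomposition group at a prime over $p$, together with Chebotarev applied to the elements $\tau'g$ ($g\in\Gamma$) and the cocycle relation $\tilde{s}(\tau'g)=\tilde{s}(\tau')+\tau\tilde{s}(g)$, one shows $\tilde{s}(g)\in(\tau-1)\rmM_{n}$ for every $g\in\Gamma$. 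Hence the image of $\phi_{s}$ is a $\Delta_{n}$-stable subgroup of the proper subgroup $(\tau-1)\rmM_{n}$, and the irreducibility of $\rmM_{n}$ forces $\phi_{s}=0$. No auxiliary Flach classes or reciprocity input enter.

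Your proposal instead invokes the full Euler system machinery at a new auxiliary prime $q$, and this does not work for Lemma \ref{lm2} for two reasons. First, the orthogonality argument of Lemma \ref{lm1} relies on $s$ lying in the Bloch--Kato Selmer group $\rmH^{1}_{f}(\QQ,\rmM_{n})$, so that $\langle\res_{v}(s),\res_{v}(\kappa^{[q]}_{n})\rangle_{v}=0$ for every $v\neq q$; but by definition $\rmH^{1}_{\{p\}}(\QQ,\rmM_{n})$ imposes a local condition only at $p$ and no local condition at the other places, so the sum over $v$ in the global reciprocity does not reduce to the single term at $q$. Second, even if it did, Lemma \ref{index} only gives that the class $\kappa^{[q]}_{n}$ has singular depth $\eta$, so the orthogonality would only yield that $\eta\cdot\res_{q}(s)$ is killed (in the finite part at $q$), not that $\res_{q}(s)=0$; this does not contradict the nonvanishing of $\phi_{s}(\mathrm{Frob}_{\mathfrak{Q}})$ unless $\eta$ is a unit. (There is also a minor confusion in your condition (i): $\phi_{s}$ is defined on $\rmG_{\rmF_{n}}$, so the nonvanishing condition should be on a Frobenius class in $\Gal(\rmF'_{n}/\QQ)$, not in $\Delta_{n}$.) The point of Lemma \ref{lm2} in the overall architecture is precisely that it requires no new Euler system input: it is a rigidity statement that converts the single local vanishing at $p$ into a global conclusion via Chebotarev and irreducibility of the Galois module.
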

\begin{proof}
Let $s\in \rmH^{1}_{\{p\}}(\QQ, \rmM_{n})$ and consider the exact sequence 
\begin{equation}
0\rightarrow \rmH^{1}(\Delta_{n}, \rmM_{n})\rightarrow \rmH^{1}(\QQ, \rmM_{n})\rightarrow \rmH^{1}(\rmF_{n}, \rmM_{n})^{\Delta_{n}}=\Hom_{\Delta_{n}}(\rmG_{\rmF_{n}}, \rmM_{n})\rightarrow0.
\end{equation}
Let $\psi: \rmG_{\rmF_{n}}\rightarrow \rmM_{n}$ be the image of $s$ in $\Hom_{\Delta_{n}}(\rmG_{\rmF_{n}}, \rmM_{n})$. Then we need to show that $\psi=0$. Let $\tilde{s}$ be the cocycle representing $s$ and let $\rmF^{\prime}_{n}$ be the fixed field of the kernel of $\tilde{s}$. Let $\Gamma$ be $\Gal(\rmF^{\prime}_{n}/\rmF_{n})$, then it is clear that $\psi$ factors through $\psi: \Gamma\rightarrow \rmM_{n}$. Let $\tau$ be a Frobenius element over $p$ in $\Delta_{n}$ and fix a lift $\tau^{\prime}$ to $\Gal(\rmF^{\prime}_{n}/\QQ)$. Let $g$ be any element in $\Gamma$. By the Chebatorev density theorem we can find a place $v^{\prime}$ of $\rmF^{\prime}_{n}$ such that $\mathrm{Fr}_{\rmF^{\prime}_{n}/\rmF_{n}}(v^{\prime})=\tau^{\prime}g$. Let $v$ be the place under $v^{\prime}$ in $\rmF_{n}$ which necessarily lie over $p$. 

Since $s_{p}:=\mathrm{res}_{p}(s)$ is trivial, $\tilde{s}\vert_{\Gal(\rmF^{\prime}_{n, v^{\prime}}/\QQ_{p})}$ is a coboundary. Thus we have
\begin{equation*}
\tilde{s}(\tau^{\prime}g)\in (\tau^{\prime}g-1)\rmM_{n}=(\tau-1)\rmM_{n}. 
\end{equation*} 
Taking $g=1$ gives $\tilde{s}(\tau^{\prime})\in (\tau-1)\rmM_{n}$. On the other hand, the cocycle relation gives
\begin{equation*}
\tilde{s}(\tau^{\prime}g)=\tilde{s}(\tau^{\prime})+\tau\tilde{s}(g).
\end{equation*}
It follows then $\tau\tilde{s}(g)\in (\tau-1)\rmM_{n}$. Hence $\tilde{s}(g)\in (\tau-1)\rmM_{n}$ for any $g\in\Gamma$ as $(\tau-1)\tilde{s}(g)\in (\tau-1)\rmM_{n}$. Thus the image of $\psi$ lie in $(\tau-1)\rmM_{n}$. Note that $\psi$  is $\Delta_{n}$-equivariant and $\rmM_{n}$ is irreducible. Since $\rmM_{n}\neq (\tau-1)\rmM_{n}$, the element $\psi$ is zero. 
\end{proof}

\begin{myproof}{Theorem}{\ref{main}}
For each $n$, we pick an $n$-admissible prime for $f$. By the previous Lemma \ref{lm1} and Lemma \ref{lm2}, we have
\begin{equation*}
\eta \rmH^{1}_{f}(\QQ, \rmM_{n}) \subset \rmH^{1}(\Delta_{n}, \rmM_{n}).
\end{equation*}
By the second assumption in the statement of the Theorem, we know $\rmH^{1}(\Delta_{n}, \rmM_{n})$ is trivial. Thus $\rmH^{1}_{f}(\QQ, \rmM_{n})$ is indeed annihilated by $\eta$ for each $n$. Hence $\rmH^{1}_{f}(\QQ, \Ad^{0}(\calM_{\rho}))$ has length less than or equal to $\nu=\ord_{\lambda}(\eta)$.
\end{myproof} 

To prove the main theorem, we will study the Bloch--Kato Selmer group $\rmH^{1}_{f}(\QQ, \mathrm{Ad}^{0}(\calM_{\rho}))$ from the perspective of deformation theory of the residual representation $\overline{\rho}$. Although the Bloch--Kato Selmer group itself has less connection with the deformation theory of the residual representation $\overline{\rho}$, we can introduce a smaller Selmer group as follows. We define the local condition $\rmH^{1}_{\mathrm{new}}(\QQ_{v}, \Ad^{0}(\calM_{\rho}))$ and $\rmH^{1}_{\mathrm{new}}(\QQ_{v}, \rmM_{n})$ as in \cite[Definition 3.6]{Lun}, see also \cite[(3.3), (3.4)]{KO}.  Then we define the Selmer group 
\begin{equation*}
\rmH^{1}_{\calS}(\QQ, \Ad^{0}(\calM_{\rho}))  
\end{equation*}
to be 
\begin{equation*}
\mathrm{ker}\{\rmH^{1}(\QQ,\Ad^{0}(\calM_{\rho}))\rightarrow \prod_{v\not\in \Sigma^{-}_{\mathrm{mix}}}\frac{\rmH^{1}(\QQ_{v}, \Ad^{0}(\calM_{\rho}))}{\rmH^{1}_{f}(\QQ_{v},\Ad^{0}(\calM_{\rho}))}\times \prod_{v\in \Sigma^{-}_{\mathrm{mix}}}\frac{\rmH^{1}(\QQ_{v}, \Ad^{0}(\calM_{\rho}))}{\rmH^{1}_{\new}(\QQ_{v},\Ad^{0}(\calM_{\rho}))}\}.
\end{equation*}

On the other hand, consider the global deformation problem given by
\begin{equation*}
\calS_{\mix}:=(\overline{\rho}, \chi_{l}, \Sigma^{+}\cup \Sigma^{-}_{\ram}\cup\Sigma^{-}_{\mix}\cup\{l\}, \{\calD_{v}\}_{v\in \Sigma^{+}\cup \Sigma_{\ram}\cup\Sigma_{\mix}\cup\{l\}})
\end{equation*}
 that classifies the deformations of $\overline{\rho}$ over an $\calO$-algebra which satisfy the following local deformation conditions:
\begin{enumerate}
\item for $v=l$, $\calD_{l}$ classifies deformations which are Fontaine--Laffaille crystalline;
\item for $v\in \Sigma^{-}_{\ram}\cup \Sigma^{+}$, $\calD_{v}$ is the local deformation problem that classifies deformations that are minimally ramified;
\item for $v\in \Sigma^{-}_{\mix}$,  $\calD_{v}=\calD^{\new}_{v}$ is the local deformation problem that classifies deformations that are new in the sense of \cite[Definition 3.6]{Lun}.
\end{enumerate}
It can be seen that the local deformation problem $\calD^{\new}_{v}$ at $v\in \Sigma^{-}_{\mix}$ agrees with that of $\calD^{\ram}_{v}$ in the sense of \cite[Definition 3.51]{LTXZZa}.
This global deformation problem is represented by the deformation ring $\rmR_{\mix}$. Moreover it follows from \cite[Proposition 4.1]{Lun}, see also \cite[Theorem 3.14]{KO} that there is an isomorphism
\begin{equation}
\rmR_{\mix}\cong \TT_{\frakm}.
\end{equation}
Consider the congruence number $\eta_{f}(\rmN^{+},\rmN^{-})$ defined in \cite[\S2.2]{PW} that detects congruences between $f$ and modular forms in $\rmS_{2}(\Gamma_{0}(\rmN))$ which are new at primes dividing $\rmN^{-}$, the $\lambda$-valuation $\ord_{\lambda}(\eta_{f}(\rmN^{+},\rmN^{-}))$ agrees with the length of $\rmH^{1}_{\calS}(\QQ, \Ad^{0}(\calM_{\rho}))$ as a consequence of the above $\rmR=\rmT$ theorem, see \cite[Theorem 4.3]{Lun}, \cite[\S 4.3]{KO}. It also follows that $\calZ_{\rmN^{+}}(\overline{\rmB})=\calO_{\lambda}[\rmZ_{\rmN^{+}}(\overline{\rmB})]_{\frakm}
$ is free over $\TT_{\frakm}$ of rank $1$. Hence \cite[4.17]{DDT} implies that $\eta_{f}(\rmN^{+},\rmN^{-})$ can be chosen to be the Petersson norm $\calP(f^{\dagger})$ of $f^{\dagger}$. Thus we have
\begin{equation*}
\mathrm{length}_{\calO}\phantom{.}\rmH^{1}_{\calS}(\QQ, \Ad^{0}(\calM_{\rho}))=\ord_{\lambda}(\calP(f^{\dagger}))
\end{equation*}
On the other hand, we have an inclusion
\begin{equation*}
\rmH^{1}_{\calS}(\QQ, \Ad^{0}(\calM_{\rho}))\hookrightarrow \rmH^{1}_{f}(\QQ, \Ad^{0}(\calM_{\rho}))
\end{equation*}
by the definitions of the local conditions defining these Selmer groups. Indeed, one can show that
\begin{equation*}
\rmH^{1}_{\new}(\QQ_{v},\Ad^{0}(\calM_{\rho}))
\end{equation*}
is trivial by \cite[Proposition 4.4]{KO}.
Hence we have
\begin{equation*}
\mathrm{length}_{\calO}\phantom{.}\rmH^{1}_{f}(\QQ, \Ad^{0}(\calM_{\rho}))\geq \mathrm{length}_{\calO}\phantom{.}\rmH^{1}_{\calS}(\QQ, \Ad^{0}(\calM_{\rho}))=\ord_{\lambda}(\calP(f^{\dagger})).
\end{equation*}

\begin{theorem}
Under the assumption of Theorem \ref{main}, we have an equality
\begin{equation*}
\mathrm{length}_{\calO}\phantom{.}\rmH^{1}_{f}(\QQ, \mathrm{Ad}^{0}(\calM_{\rho}))=\ord_{\lambda}(\calP(f^{\dagger})).
\end{equation*}
\end{theorem}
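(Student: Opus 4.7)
The plan is simply to combine two matching bounds on the length of $\rmH^{1}_{f}(\QQ, \mathrm{Ad}^{0}(\calM_{\rho}))$. The upper bound $\mathrm{length}_{\calO}\,\rmH^{1}_{f}(\QQ, \Ad^{0}(\calM_{\rho})) \leq \nu$ is precisely Theorem \ref{main}: it is obtained by applying the Flach system argument (Lemma \ref{lm1} and Lemma \ref{lm2}) at each level $n$ and passing to the limit, after using Lemma \ref{index} to identify the singular depth of $\kappa^{[p]}_{n}$ with $\eta=\varpi^{\nu}$ via the reciprocity law of Theorem \ref{reciprocity} and Remark \ref{depth}.

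For the matching lower bound I would work with the auxiliary Selmer group $\rmH^{1}_{\calS}(\QQ, \Ad^{0}(\calM_{\rho}))$, whose local conditions coincide with the Bloch--Kato conditions away from $\Sigma^{-}_{\mix}$ and use the new quotients $\rmH^{1}_{\new}$ at primes in $\Sigma^{-}_{\mix}$. Because $\rmH^{1}_{\new}(\QQ_{v}, \Ad^{0}(\calM_{\rho}))$ vanishes at such $v$ by \cite[Proposition 4.4]{KO}, the local condition at these places is actually \emph{more} restrictive than the unramified one, which yields the natural inclusion
\begin{equation*}
\rmH^{1}_{\calS}(\QQ, \Ad^{0}(\calM_{\rho})) \hookrightarrow \rmH^{1}_{f}(\QQ, \Ad^{0}(\calM_{\rho})).
\end{equation*}

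Next I would invoke the refined $R=T$ theorem $\rmR_{\mix} \cong \TT_{\frakm}$ of \cite[Proposition 4.1]{Lun} (see also \cite[Theorem 3.14]{KO}), whose standard tangent-space formalism identifies the length of $\rmH^{1}_{\calS}(\QQ, \Ad^{0}(\calM_{\rho}))$ with $\ord_{\lambda}(\eta_{f}(\rmN^{+},\rmN^{-}))$, where $\eta_{f}(\rmN^{+},\rmN^{-})$ is the congruence number of \cite[\S 2.2]{PW}. The concluding input is that the freeness of $\Gamma(\rmZ_{\rmN^{+}}(\overline{\rmB}),\calO)_{\frakm}$ of rank one over $\TT_{\frakm}$ (Lemma \ref{free}, together with the cleanness of $d$) allows one to apply \cite[4.17]{DDT} and thereby identify $\eta_{f}(\rmN^{+},\rmN^{-})$ with the Petersson norm $\calP(f^{\dagger})$. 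This gives
\begin{equation*}
\mathrm{length}_{\calO}\,\rmH^{1}_{f}(\QQ, \Ad^{0}(\calM_{\rho})) \;\geq\; \mathrm{length}_{\calO}\,\rmH^{1}_{\calS}(\QQ, \Ad^{0}(\calM_{\rho})) \;=\; \ord_{\lambda}(\calP(f^{\dagger})) \;=\; \nu,
\end{equation*}
and sandwiching with the upper bound from Theorem \ref{main} produces the desired equality.

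The two bounds have essentially opposite natures, and the real content is already contained in the earlier machinery; the theorem itself is a matter of matching them. The subtle step is not the inclusion $\rmH^{1}_{\calS} \hookrightarrow \rmH^{1}_{f}$ but rather the identification of the congruence number $\eta_{f}(\rmN^{+},\rmN^{-})$ with $\calP(f^{\dagger})$, which depends on the rank-one freeness of the quaternionic forms module coming from the Taylor--Wiles patching input of \cite{CH1} under Assumption \ref{ass1}. Once that identification is in place, everything lines up and the proof is a one-line chain of inequalities.
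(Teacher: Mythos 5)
Your proposal reproduces the paper's own argument essentially verbatim: the upper bound comes from Theorem \ref{main}, the lower bound from comparing $\rmH^{1}_{f}$ with the auxiliary Selmer group $\rmH^{1}_{\calS}$ via the vanishing of $\rmH^{1}_{\new}$ at $\Sigma^{-}_{\mix}$ (\cite[Proposition 4.4]{KO}), the refined $\rmR_{\mix}\cong\TT_{\frakm}$ theorem, and the identification of the congruence number $\eta_{f}(\rmN^{+},\rmN^{-})$ with $\calP(f^{\dagger})$ through the rank-one freeness of $\Gamma(\rmZ_{\rmN^{+}}(\overline{\rmB}),\calO)_{\frakm}$ and \cite[4.17]{DDT}. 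The only small imprecision is that the cleanness of $d$ is not needed for this lower bound (it is the level-$\rmN^{+}$ rank-one freeness from \cite[Proposition 6.8]{CH1} that enters, not the rank-two statement of Lemma \ref{free}), but this does not affect correctness.
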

\begin{proof}
By the previous Theorem \ref{main}, we have the following inequality
\begin{equation*}
\ord_{\lambda}(\calP(f^{\dagger}))\geq \mathrm{length}\phantom{.}\rmH^{1}_{f}(\QQ, \mathrm{Ad}^{0}(\calM_{\rho})).
\end{equation*}
By the above discussion we have
\begin{equation*}
\mathrm{length}_{\calO}\phantom{.}\rmH^{1}_{f}(\QQ, \Ad^{0}(\calM_{\rho}))\geq \mathrm{length}_{\calO}\phantom{.}\rmH^{1}_{\calS}(\QQ, \Ad^{0}(\calM_{\rho}))=\ord_{\lambda}(\calP(f^{\dagger})).
\end{equation*}
We have finally arrived at the desired equality $\mathrm{length}_{\calO}\phantom{.}\rmH^{1}_{f}(\QQ, \Ad^{0}(\calM_{\rho}))= \ord_{\lambda}(\calP(f^{\dagger}))$.
\end{proof}

\end{document}